\newtheorem{thm}{Theorem}[section]
\newtheorem{cor}[thm]{Corollary}
\newtheorem{lemma}[thm]{Lemma}
\newtheorem{prop}[thm]{Proposition}
\numberwithin{equation}{section}
\theoremstyle{definition}
\newtheorem{rem}[thm]{Remark}
\newtheorem{definition}[thm]{Definition}
\newcommand{\bR}{{\mathbb{R}}}
\newcommand{\bZ}{{\mathbb{Z}}}
  \newcommand{\A}{{\mathcal{A}}}
  \newcommand{\F}{{\mathcal{F}}}
  \newcommand{\U}{{\mathcal{U}}}
\begin{document}

%\hfill {\color{blue}{15th December 2013}}

\title[The norm closed triple semigroup algebra]{The norm closed triple semigroup algebra}

% \title[The operator algebra  generated by the three semigroups]{The %operator algebra on $L^2(\bR)$ generated by the  translation, multiplication and dilation semigroups}

% \title[The operator algebra  generated by the three semigroups]{The %operator algebra on $L^2(\bR)$ generated by the  translation, %multiplication and dilation semigroups}

\author[E. Kastis]{E. Kastis}
%\thanks{Supported by EPSRC grant  EP/J008648/1.}
%\emph{Crystal frameworks, almost periodic rigidity}
\address{Dept.\ Math.\ Stats.\\ Lancaster University\\
Lancaster LA1 4YF \\U.K. }

\email{l.kastis@lancaster.ac.uk}
%\email{s.power@lancaster.ac.uk}
%\begin{center}

%\thanks{thanks ?}

\thanks{2010 {\it  Mathematics Subject Classification.}
 {47L75, 47L35 } \\
Key words and phrases: {operator algebra, semicrossed products, parabolic algebra, chirality}}

\begin{abstract}
The $w^\ast$-closed triple semigroup algebra was introduced by Power and the author in \cite{kas-pow}, where it was proved to be reflexive and to be chiral, in the sense of not being unitarily equivalent to its adjoint algebra. Here an analogous operator norm-closed triple semigroup algebra $A_{ph}^{G_+}$ is considered and shown to be a triple semi-crossed product
for the action on analytic almost periodic functions by the semigroups of one-sided translations and one-sided dilations. The structure of isometric automorphisms of $A_{ph}^{G_+}$ is determined and $A_{ph}^{G_+}$ is shown to be chiral with respect to isometric isomorphisms.
\end{abstract}
\date{}

\maketitle
%\tableofcontents

\section{Introduction}
Let $\{M_\lambda\, :\, \lambda\in\bR\}$, $\{D_\mu\,:\, \mu\in \bR\}$ and $\{V_t\,:\, t\in\bR\} $ be the unitary operators of multiplication, translation and dilation respectively, acting on the Hilbert space $L^2(\bR)$ given by
\begin{align*}
M_\lambda f(x)= e^{i\lambda x} f(x),~~D_\mu f(x)=f(x-\mu),~~V_tf(x)=e^{t/2}f(e^tx).
\end{align*}
The first two groups give a celebrated irreducible representation of the Weyl commutation relations  in the form
$M_\lambda D_\mu =e^{i\lambda\mu} D_\mu M_\lambda$, while the dilation group satisfies the relations
\begin{align*}
V_t M_\lambda=M_{e^t\lambda}V_t ~\text{ and }~ 
V_t D_\mu=D_{e^{-t}\mu}V_t.
\end{align*}
Our main results are the determination of the isometric isomorphism group of the norm closed nonselfadjoint operator algebra $A_{ph}$, generated by the semigroups for $\lambda,\mu, t\geq0$, and a chirality property for $A_{ph}$. 
The weakly closed operator algebra $\A_{ph}$ generated by the three semigroups was shown in \cite{kas-pow} to be reflexive, in the sense of Halmos \cite{rad-ros}, and to have the rigidity property of failing to be unitarily equivalent to the adjoint operator algebra $\A_{ph}^\ast$. We termed this property a chiral property since, in many other respects the algebras carry similar properties. In particular, the invariant projection lattices $Lat \A_{ph}$ and $Lat\A_{ph}^\ast$ were naturally order isomorphic lattices with unitarily equivalent pairs of interval projections. That $\A_{ph}$ is chiral contrasts with the parabolic algebra $\A_p$ \cite{kat-pow-1}, generated by the multiplication and translation semigroups, as well as with the usual Volterra nest algebra on $L^2(\bR)$ \cite{dav1}. For related classes of semigroup generated weakly closed algebras see also \cite{ano-kat-tod, kat-pow-2, lev-pow-1}.

In the norm closed case considered here we take advantage of the theory of discrete semicrossed products. We prove that there natural identifications
\begin{align*}
A_p=AAP\rtimes_\tau\bR_+~,~ A_{ph}^{\bZ_+}=A_p\rtimes_v \bZ_+~,~ A_{ph}^{\bR_+}=A_p\rtimes_v \bR_+
\end{align*}
where $A_p$ is the norm closed parabolic algebra, $AAP$ is the algebra of analytic almost periodic functions in $L^\infty(\bR)$ and $A_{ph}^{G_+}$ is generated by $A_p$ and $\{V_t:t\in G_+\}$.
The notion of semicrossed products began with Arveson \cite{arv} in 1967, and was developed by the studies of Peters \cite{pet} and McAsey and Muhly \cite{mca-muh} in the early eighties.
Since then, several studies of semicrossed products of $C^\ast$-algebras have been under investigation by various authors \cite{pow1, dav-kats1, kak-kats}.
To avoid categorical issues we define all the semicrossed products algebras that we consider as subalgebras of their associated $C^\ast$-crossed products \cite{ped}.
Indeed, in the case of the semicrossed product $A_p$, this algebra coincides with its universal counterpart, defined as usual in terms of all contractive covariant representations of the generator semigroup \cite{pow2}. However, we do not know if this persists for the triple semicrossed product algebra $A_{ph}^{G_+}$.

 Many of the results of isomorphisms of crossed products are concerned with the case of the discrete group $\bZ$ (see \cite{pow1}), while we also deal with the group of the real numbers endowed again with the discrete topology. This case is more subtle since the group $C^\ast$-algebra of $\bR$ is the algebra of the almost periodic functions, which induces limit characters that arise from the Bohr compactification of $\bR$. Moreover, the introduction of the triple semigroup semicrossed product makes the identification of the maximal ideal space of the algebra problematic. To overcome such problems, we identify the isometric automorphism group of the norm closed parabolic algebra $A_p$ and prove that each isometric automorphism of $A_{ph}^{G_+}$ leaves $A_p$ invariant. Finally, we remark that this approach works for both cases where we choose the dilation operators $V_t$  in $\bR$ or in $\bZ$.

\section {Preliminaries}

\subsection{Discrete Crossed Products}
Crossed products of C$^\ast$-algebras were introduced by Murray and von Neumann as a tool for studying groups that act on C$^\ast$-algebras as automorhisms, since they provide a larger algebra that encodes both the original C$^\ast$-algebra and the group action. The reader may look for more details in \cite{dav2, pow, broza, wil}.
\begin{definition}
 A \textbf{C}$^\ast$\textbf{-dynamical system} is a triple $(\mathcal{A},G,\alpha)$ that consists of a unital C$^\ast$-algebra $\mathcal{A}$, a group $G$ and a homomorphism 
\begin{align*}
\alpha: G\rightarrow Aut(\mathcal{A}): s\mapsto \alpha_s
\end{align*}   
Given a C$^\ast$-dynamical system, a \textbf{covariant representation} is a pair $(\pi,U)$, such that $\pi$ is a representation of $\mathcal{A}$ on some Hilbert space $H$ and $U:s\mapsto U_s$ is a \textbf{unitary representation} of $G$ on the same space, that also satisfies the formula 
\begin{align*}
U_s\pi(A)U_s^\ast=\pi(\alpha_s(A)),\,\forall A\in\mathcal{A},s\in G.
\end{align*}
\end{definition}
In this section, we will restrict our attention to discrete crossed products, where $G$ is a discrete \textbf{abelian} group. 
%Note that the Haar measure in discrete groups is the counting measure.  

We form the complex vector space $\mathcal{A}G$ of (generalized) trigonometric polynomials :
\begin{align*}
\mathcal{A}G=\textrm{span}\{\delta_s\otimes A : s\in G,\,A\in \mathcal{A}\},\textrm{ where } 
\delta_s(t)=\begin{cases} 1, & \mbox{if } t=s \\ 0, & \mbox{if } t\neq s \end{cases}
\end{align*}
 and endow it with ring multiplication and involution given by
 \begin{align*}
 (\delta_s\otimes A)\cdot (\delta_t\otimes B)&=
 (\delta_{s+t}\otimes A\alpha_s(B))\\
(\delta_s\otimes A)^\ast&=(\delta_{-s}\otimes \alpha_{-s}(A^\ast))
\end{align*}
respectively.
% It is trivial to see that the inclusion map 
%\begin{align*}
%i:\mathcal{A}\rightarrow \mathcal{A}G: A\mapsto (\delta_0\otimes A)
%\end{align*}
%is injective. Observe also that we can factorize an element $(\delta_s\otimes \alpha_s(A))$ in two different ways:
%\begin{align*}
%(\delta_0\otimes \alpha_s(A))\cdot (\delta_s\otimes 1)=(\delta_s\otimes \alpha_s(A))=
 %(\delta_s\otimes 1)\cdot (\delta_0\otimes A).
 %\end{align*}
 The algebra $\mathcal{A}G$ becomes a normed $\ast$-algebra with the norm: 
 \begin{align*}
 \bigg\|\sum_{\substack{s\in F \\ F \subset\subset G}}(\delta_s\otimes A_s)\bigg\|_{\ell_1}=
\sum_{\substack{s\in F \\ F \subset\subset G}}\|A_s\|,
\end{align*} 
where the notation $F\subset\subset G$ means that $F$ is a finite subset of $G$. 
%Given a $\|\cdot\|_{\ell_1}$-continuous $\ast$-homomorphism $\rho:\ell_1(G,\mathcal{A})\rightarrow B(H)$, the function $\rho\circ i$ is a representation of $\mathcal{A}$. For every $s\in G$, the operator  $U_s=\rho(\delta_s\otimes 1)$ is a unitary element of $B(H)$, since
%\begin{align*}
%U^\ast U=\rho((\delta_s\otimes 1)^\ast \cdot (\delta_s\otimes1))
%=\rho(\delta_{s^{-1}s}\otimes 1)= 1_H=UU^\ast
%\end{align*}
%Furthermore, given $s\in G$ and $A\in\mathcal{A}$ we have
% \begin{align*}
%U_s(\rho\circ i)(A)&= \rho(\delta_s\otimes 1)\rho(\delta_0\otimes A)=
%\rho((\delta_s\otimes 1)\cdot(\delta_0\otimes A))=\\&=
%\rho((\delta_s\otimes \alpha_s(A))\cdot(\delta_s\otimes 1))=
%\rho(\delta_s\otimes \alpha_s(A))\rho(\delta_s\otimes 1)=
%(\rho\circ i)(\alpha_s(A))U_s
%\end{align*}
%Thus, the pair $(\rho\circ i, U)$ satisfies the covariant relation :
%\begin{align*}
%U_s(\rho\circ i)(A) U_s^\ast=(\rho\circ i)(\alpha_s(A)), \, \forall A\in\mathcal{A}. 
%\end{align*}

Each $(\pi,U)$ covariant representation induces a $\ast$-homomorphism on $\mathcal{A}G$, since the linear map $\pi\rtimes U$ in $\mathcal{A}G$ with
\begin{align*}
(\pi\rtimes U)\left(\sum_{\substack{s\in F \\ F \subset\subset G}} (\delta_s\otimes A_s)\right)= \sum_{\substack{s\in F \\ F \subset\subset G}}\pi(A_s)U_s
\end{align*}
is bounded:
\begin{align*}
\Bigg\|(\pi\rtimes U)\left(\sum_{\substack{s\in F \\ F \subset\subset G}} (\delta_s\otimes A_s)\right)\Bigg\|\leq 
\sum_{\substack{s\in F \\ F \subset\subset G}}\|\pi(A_s)\|\leq
\sum_{\substack{s\in F \\ F \subset\subset G}}\|(A_s)\|=
\Bigg\|\sum_{\substack{s\in F \\ F \subset\subset G}}(\delta_s\otimes A_s)\Bigg\|_{\ell_1}.
\end{align*}
We define the $C^\ast$-algebra $\mathcal{A}\times_\alpha G$ as the completion of $\mathcal{A}G$ with respect to the norm
\begin{align*}
\|F\|:=\sup\{\|(\pi\rtimes U)(F)\|\,:\,(\pi,U)\textrm{ covariant representation of }\mathcal{A}G\}.
\end{align*}
%It is easy to check that the family $\{\|\cdot\|_n\}$ satisfy Ruan's axioms, so 
%it forms an $\ell^\infty$ matrix seminorm structure on $\ell^1(G,\mathcal{A})$. 
  
%$\ell^1(G,\mathcal{A})/N$, where $N$ is the kernel of $\|\cdot\|_1$. First, 
Observe that $\mathcal{A}\times_\alpha G$ satisfies the universal property :

\begin{center}
\textit{ If $(\pi,U)$ is a covariant representation of the dynamical system $(\mathcal{A},G,\alpha)$,\\ then there is a representation $\tilde{\pi}$ of $\mathcal{A}\times_\alpha G$, such that $\tilde{\pi}(\delta_s\otimes A)=\pi(A)U_s$.}
\end{center}

To prove that the crossed product norm  is a C$^\ast$-norm and not just a seminorm, we need a covariant representation that admits a faithful representation of $\mathcal{A}G$. By the Gelfand Naimark theorem, let $\pi$ be a faithful representation of $\mathcal{A}$ on some Hilbert space $H$. Define the covariant representation $(\tilde{\pi},\Lambda)$ of $(\mathcal{A},G,\alpha)$, such that
\begin{equation}\label{lrrep1}
\tilde{\pi}:\mathcal{A}\rightarrow B(\ell^2(G,H)): (\tilde{\pi}(A)x)(s)=
\pi(\alpha_{-s}(A))(x(s))
\end{equation}
and $\Lambda$ is the \textbf{left regular representation} on $\ell^2(G,H)$
\begin{equation}  \label{lrrep2}
\Lambda:G\rightarrow B(\ell^2(G,H)): (\Lambda_t x)(s)=x(s-t)
\end{equation}
for all $s,t\in G,\,A\in\mathcal{A},\,x\in\ell^2(G,H)$. One can easily verify that $\tilde{\pi}$ is a representation of $\mathcal{A}$ and $\Lambda$ is a unitary representation of $G$. Also we have the covariance condition
\begin{align*}
(\Lambda_t\tilde{\pi}(A)\Lambda_t^\ast x)(s)&=(\tilde{\pi}(A)\Lambda_t^\ast x)(s-t)=
\pi(\alpha_{t-s}(A))(\Lambda_t^\ast x(s-t))=\\&=
\pi(\alpha_{-s}\alpha_t(A))(x(s))=(\tilde{\pi}(\alpha_t(A))x)(s).
\end{align*}
Note first that the algebra $\mathcal{A}$ is isometrically embedded into the crossed product by the inclusion map
\begin{align*}
\iota:\mathcal{A}\rightarrow \mathcal{A}\times_\alpha G : A\mapsto (\delta_0\otimes A).
\end{align*}
To prove that $\iota$ is an isometry, let $\delta_{s,\xi}$ be the vector in $\ell^2(G,\mathcal{A})$ that is defined by
\begin{align*}
 \delta_{s,\xi}(t):=\delta_{s,t}\xi=
\begin{cases} \xi, & \mbox{if } t=s \\ 0, & \mbox{if } t\neq s \end{cases}.
\end{align*}
Then
\begin{align*}
\Lambda_t\delta_{s,\xi}=\delta_{s+t,\xi} ~\text{ and }~ \tilde{\pi}(A)\delta_{s,\xi}=\delta_{s,\pi(\alpha_{-s}(A))\xi},
\end{align*}
for every $t,s\in G, A\in\mathcal{A}, \xi\in H$. Calculate
\begin{align*}
\|(\delta_0\otimes A)\|^2&\geq \sup_{\|\xi\|=1} \| (\tilde{\pi}\rtimes \Lambda)(\delta_0\otimes A) \delta_{0,\xi}\|^2_{\ell^2(G,H)} =
\sup_{\|\xi\|=1} \|\tilde{\pi}(A) \delta_{0,\xi}\|^2_{\ell^2(G,H)}=\\&=\sup_{\|\xi\|=1} \|\pi(A)\xi\|^2_H=\|A\|^2.
\end{align*}
The opposite inclusion is straightforward from the fact that $\|A\|=\|(\delta_0\otimes A)\|_{\ell^1}$.

For every $s\in G$, we denote by $V_s$ the operator
\begin{align*} 
V_s:H\rightarrow\ell^2(G,H):\xi\mapsto\delta_{s,\xi}
\end{align*}
so its adjoint operator has the form $V_s^\ast:\ell^2(G,H)
\rightarrow H:x\mapsto x(s)$. 
Given now any element
$\sum\limits_{\substack{s\in F \\ F \subset\subset G}}(\delta_s\otimes A_s)\in\mathcal{A}G$ and $\xi\in H$ we have
\begin{align*}
V_0^\ast(\tilde{\pi}\rtimes\Lambda)\left(\sum\limits_{\substack{s\in F \\ F \subset\subset G}}(\delta_s\otimes A_s)\right)V_0\xi&=
\sum\limits_{\substack{s\in F \\ F \subset\subset G}}
V_0^\ast\tilde{\pi}(A_s)\Lambda_s\delta_{0,\xi}=
\sum\limits_{\substack{s\in F \\ F \subset\subset G}}
V_0^\ast\tilde{\pi}(A_s)\delta_{s,\xi}=\\&=
\sum\limits_{\substack{s\in F \\ F \subset\subset G}}
V_0^\ast\delta_{s,\pi(\alpha_{-s}(A_s))\xi}=
\sum\limits_{\substack{s\in F \\ F \subset\subset G}}
\delta_{s,0}\pi(\alpha_{-s}(A_s))\xi
=\\&=\pi(A_0)\xi.
%=\pi\left(\Phi\left(\sum\limits_{\substack{s\in F \\ F \subset\subset G}}(\delta_s\otimes A_s)\right)\right)\xi.
\end{align*}
Hence it follows readily from the equality $\|\pi(A_0)\|=\bigg\|V_0^\ast(\tilde{\pi}\rtimes\Lambda)\left(\sum\limits_{\substack{s\in F \\ F \subset\subset G}}(\delta_s\otimes A_s)\right)V_0\bigg\|$, that 
$\|A_0\|\leq \|\sum\limits_{\substack{s\in F \\ F \subset\subset G}}(\delta_s\otimes A_s)\|$. Therefore, one can define the contractive map 
\begin{equation}\label{expectinag}
E_0:\mathcal{A}G \rightarrow \mathcal{A}:  \sum
\limits_{\substack{s\in F \\ F \subset\subset G}}(\delta_s\otimes A_s) \mapsto A_0.
\end{equation}
Check also that for every $X=\sum\limits_{\substack{s\in F \\ F \subset\subset G}}(\delta_s\otimes A_s)\in \mathcal{A}G$ we get $E_0(XX^\ast )= \sum\limits_{\substack{s\in F \\ F \subset\subset G}} A_s^\ast A_s$, so the map $E_0$ keeps the cone of positive elements of $\mathcal{A}G$ invariant. So we have proved the following
\begin{prop}\label{conexp}
The map $E_0$ is an expectation \footnote{An expectation of a C$^\ast$-algebra onto a subalgebra is a positive, unital idempotent map.} on $\mathcal{A}G$ and extends by continuity to a map on $\mathcal{A}\times_\alpha G$ with the same properties.
\end{prop}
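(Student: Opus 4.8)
The plan is to keep the purely algebraic properties of $E_0$ on $\mathcal{A}G$ separate from the analytic step of extending it to the completion, and to deduce positivity only at the end.

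First I would record the algebraic facts on $\mathcal{A}G$. Linearity is immediate from \eqref{expectinag}, and $E_0(\delta_0\otimes 1_{\mathcal{A}})=1_{\mathcal{A}}$, so $E_0$ is unital. Under the isometric identification $\mathcal{A}=\iota(\mathcal{A})=\{\delta_0\otimes A:A\in\mathcal{A}\}\subseteq\mathcal{A}G$, the relation $E_0(\delta_0\otimes A)=A$ says exactly that $E_0\circ\iota=\mathrm{id}_{\mathcal{A}}$; since moreover $E_0$ has range $\iota(\mathcal{A})$, it follows that $E_0\circ E_0=E_0$, i.e. $E_0$ is idempotent onto $\mathcal{A}$. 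Thus the only algebraic property still to be checked is positivity, which I postpone.

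Next comes continuity and the extension. The inequality obtained just before the statement, $\|E_0(X)\|=\|A_0\|\leq\|X\|$ for $X=\sum_s(\delta_s\otimes A_s)\in\mathcal{A}G$ — proved by compressing the regular covariant representation $(\tilde\pi,\Lambda)$ of \eqref{lrrep1} and \eqref{lrrep2} by the isometry $V_0$ — says that $E_0$ is contractive for the $C^\ast$-crossed product norm on the dense $\ast$-subalgebra $\mathcal{A}G$. Hence $E_0$ extends uniquely to a contractive linear map on $\mathcal{A}\times_\alpha G$, still denoted $E_0$. Since $\iota(\mathcal{A})$ is norm closed in $\mathcal{A}\times_\alpha G$ (being the isometric image of the complete space $\mathcal{A}$) and $E_0(\mathcal{A}G)\subseteq\iota(\mathcal{A})$, the extension again has range $\mathcal{A}$; and unitality and idempotency, which already hold on $\mathcal{A}G$ and on $\iota(\mathcal{A})$, are inherited by the extension.

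Finally I would prove positivity of the extended map, which in particular yields positivity of $E_0$ on $\mathcal{A}G$. Let $P\in\mathcal{A}\times_\alpha G$ with $P\geq 0$ and put $Y=P^{1/2}=Y^\ast$, so that $YY^\ast=P$. Choose $X_n\in\mathcal{A}G$ with $X_n\to Y$ in norm; then $X_nX_n^\ast\to P$. By the identity $E_0(XX^\ast)=\sum_s A_s^\ast A_s$ recorded above, each $E_0(X_nX_n^\ast)$ is a finite sum of positive elements of $\mathcal{A}$, hence positive in $\mathcal{A}\times_\alpha G$; and by continuity $E_0(X_nX_n^\ast)\to E_0(P)$. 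Since the positive cone of the $C^\ast$-algebra $\mathcal{A}\times_\alpha G$ is norm closed, $E_0(P)\geq 0$. This is the one step genuinely needing care, and it dictates the order above: a positive element of $\mathcal{A}G$ need not be of the form $XX^\ast$ with $X\in\mathcal{A}G$ — its square root typically escapes the dense subalgebra — so positivity has to be verified after the continuous extension rather than by a finite algebraic identity. With positivity established, $E_0$ is an expectation onto $\mathcal{A}$ both on $\mathcal{A}G$ and on $\mathcal{A}\times_\alpha G$, as claimed.
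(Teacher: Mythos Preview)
Your proof is correct and follows essentially the same line as the paper: contractivity comes from the compression $V_0^\ast(\tilde\pi\rtimes\Lambda)(\cdot)V_0$, the algebraic properties are immediate, and positivity rests on the identity $E_0(XX^\ast)=\sum_s A_sA_s^\ast$. You are in fact more careful than the paper on the positivity step---the paper simply asserts that this identity makes $E_0$ preserve the positive cone and that the extension inherits ``the same properties'', whereas you correctly note that a general positive element need not factor through $\mathcal{A}G$ and supply the approximation-plus-closed-cone argument to close the gap.
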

Define the $t$-th \textbf{Fourier coefficient} of $X \in\mathcal{A}\times_\alpha G$ by
\begin{align*}
E_t(X)=E_0(X(\delta_{-t}\otimes 1))\in\mathcal{A}.
\end{align*}
Note that for every element $X=\sum\limits_{\substack{s\in F \\ F \subset\subset G}}(\delta_s\otimes A_s)\in\mathcal{A}G$ and $t\in G$, we get $E_t(X)=A_t$, and so it follows 
\begin{align*}
X=\sum_{\substack{s\in F \\ F \subset\subset G}}(\delta_s\otimes E_s(X)).
\end{align*}
We can now see that the left regular representation $\tilde{\pi}\rtimes \Lambda$ of $\mathcal{A}G$ is faithful. Given $X=\sum
\limits_{\substack{s\in F \\ F \subset\subset G}}(\delta_s\otimes A_s) \in\mathcal{A}G$ such that $\|(\tilde{\pi}\rtimes \Lambda)(X)\|=0$, then for every $t\in G$ we have
\begin{align*}
\|A_t\|=\|\pi(A_t)\|=\|V_0^\ast(\tilde{\pi}\rtimes \Lambda)(X(\delta_{-t}\otimes 1))V_0\|\leq\|(\tilde{\pi}\rtimes \Lambda)(X)\|=0.
\end{align*}
Therefore $A_t=0$ for every $t\in G$, but this yields that $X=0$.
%Hence $\Phi$ is a completely positive map. 

%Let now $X\in\mathcal{A}\times_\alpha\mathbb{R}_d$, such that $\Phi(X^\ast X)=0$. Since $\alpha_s(\Phi(X^\ast X)=\Phi(\delta_s\otimes 1)X^\ast X(\delta_{-s}\otimes 1))$, we get
%\begin{align*}
%\Phi(X^\ast X)=0&\Leftrightarrow \Phi((\delta_s\otimes 1)X^\ast X(\delta_{-s}\otimes 1))=0\Leftrightarrow\\
%&\Leftrightarrow V_0^\ast(\tilde{\pi}\rtimes\Lambda)((\delta_s\otimes 1)X^\ast X(\delta_{-s}\otimes 1))V_0=0
%\Leftrightarrow\\&\Leftrightarrow \|V_0^\ast(\tilde{\pi}\rtimes\Lambda)((\delta_s\otimes 1)X^\ast)\|_{B(\ell^2(\mathbb{R}_d,H),H)}=0.
%\end{align*}
%Since the representation $\pi$ was chosen arbitrarily, it follows that $\|X\|_r=0$, so the proof is complete. 

\begin{rem}\label{reducedcp}
Since the left regular representation is faithful, we can define the reduced crossed product norm on $\mathcal{A}G$
\begin{align*}
\|\cdot\|_r = \|(\tilde{\pi}\rtimes \Lambda) (\cdot)\|.
\end{align*} 
The norm $\|\cdot\|_r$  does not depend on the choice of the faithful representation $\pi$ (see \cite{broza}). The  completion of $\mathcal{A}G$ with respect to the reduced crossed product norm gives rise to the \textbf{reduced crossed product}, denoted by $\mathcal{A}\times_\alpha^r G$. Moreover, repeating the proof of Proposition \ref{conexp}, one can show that the contraction $E_0$ given by the formula \eqref{expectinag} extends to an expectation $\tilde{E}_0$ on $\mathcal{A}\times_\alpha^r G$.
\end{rem}
\begin{rem}
In the general case, the construction via the left regular representation of $G$ is not sufficient to determine the norm of the crossed product. Although in the special case that $G$ is discrete abelian, so amenable\footnote{A group $G$ is called amenable if there is a left translation invariant state on $L^\infty(G)$}, the reduced crossed product equals the full crossed product. In the following subsection, we will give a proof of this claim in the case where $G$ is the discrete group of real numbers. 
\end{rem}

\subsection{Crossed Products by $\mathbb{R}_d$}

From now on, the group $G$ is either $\mathbb{Z}$ or $\mathbb{R}_d$; we use $\mathbb{R}_d$ to denote $\mathbb{R}$ equipped with the discrete topology. The theory about crossed products by $\mathbb{Z}$ can be found in \cite{dav2}. In this section, we develop the theory for $\mathbb{R}_d$.

\begin{prop}\label{countnumfoco}
Let $(\mathcal{A},\mathbb{R}_d,\alpha)$ be a C$^\ast$-dynamical system. Each $X\in \mathcal{A}\times_\alpha\mathbb{R}_d$ has only a countable number of nonzero Fourier coefficients. 
\end{prop}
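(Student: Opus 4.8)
The plan is to use the fact that, by construction, $\mathcal{A}\times_\alpha\mathbb{R}_d$ is the completion of the trigonometric polynomial algebra $\mathcal{A}\mathbb{R}_d$, so every element is a norm limit of polynomials each supported on a \emph{finite} subset of $\mathbb{R}_d$. Fix $X\in\mathcal{A}\times_\alpha\mathbb{R}_d$ and pick a sequence $X_n=\sum_{s\in F_n}(\delta_s\otimes A_s^{(n)})$ in $\mathcal{A}\mathbb{R}_d$ with $\|X_n-X\|\to 0$, where each $F_n$ is finite. Put $S=\bigcup_n F_n$, a countable subset of $\mathbb{R}_d$. I will show that $E_t(X)=0$ for every $t\notin S$, which is exactly the assertion that $X$ has at most countably many nonzero Fourier coefficients.

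The key intermediate step is that each Fourier coefficient map $E_t$ is norm-contractive on $\mathcal{A}\times_\alpha\mathbb{R}_d$. This follows because $\delta_{-t}\otimes 1$ is a unitary of the C$^\ast$-algebra: the multiplication rule gives $(\delta_{-t}\otimes 1)(\delta_t\otimes 1)=\delta_0\otimes\alpha_{-t}(1)=\delta_0\otimes 1$ and the involution formula gives $(\delta_{-t}\otimes 1)^\ast=\delta_t\otimes\alpha_t(1^\ast)=\delta_t\otimes 1$, so right multiplication by $\delta_{-t}\otimes 1$ is isometric. Since $E_t(\cdot)=E_0(\,\cdot\,(\delta_{-t}\otimes 1))$ and $E_0$ is a contraction on the whole crossed product by Proposition \ref{conexp}, the map $E_t$ is a composition of an isometry with a contraction, hence contractive, hence continuous.

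With this in hand the proof concludes quickly: by continuity $E_t(X)=\lim_n E_t(X_n)$ for every $t$, and for $t\notin S$ we have $t\notin F_n$ for all $n$, so $E_t(X_n)=A_t^{(n)}=0$; therefore $E_t(X)=0$. Thus every nonzero Fourier coefficient of $X$ has index in the countable set $S$. I expect the only point needing care — and it is a mild one — to be the continuity of the $E_t$, i.e.\ verifying that $\delta_{-t}\otimes 1$ really is a unitary in $\mathcal{A}\times_\alpha\mathbb{R}_d$ and invoking the extension of $E_0$ from Proposition \ref{conexp}; the rest is bookkeeping with a countable union of finite sets.
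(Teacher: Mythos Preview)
Your proof is correct and follows essentially the same approach as the paper: approximate $X$ by trigonometric polynomials supported on finite sets $F_n$, set $S=\bigcup_n F_n$, and use the contractivity of each $E_t$ to conclude that $E_t(X)=0$ for $t\notin S$. The only difference is cosmetic: you spell out in more detail why $E_t$ is contractive (via the unitarity of $\delta_{-t}\otimes 1$ and Proposition~\ref{conexp}), whereas the paper simply uses the inequality $\|E_k(X)-E_k(Y_n)\|\leq\|X-Y_n\|$ directly.
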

\begin{proof}
Let $(Y_n)_{n}$ be a sequence of generalized trigonometric polynomials in $\mathcal{A}\times_\alpha\mathbb{R}_d$ such that 
\begin{align*}
\|X-Y_n\|\leq \frac{1}{n}.
\end{align*}
We denote by $\Gamma_n$ the finite set of indices of nonzero Fourier coefficients of $Y_n$ and by $\Gamma$ the set 
\begin{align*}
\cup_{n\in\mathbb{N}} \Gamma_n.  
\end{align*}
The set $\Gamma$ is countable. Suppose now $k \notin \Gamma$; then
\begin{align*}
\|E_k (X)\|\leq \|E_k(X) - E_k(Y_n)\|+\|E_k(Y_n)\|\leq \|X-Y_n\| \leq \frac{1}{n}
\end{align*}
for every $n\in \mathbb{N}$.
\end{proof}

Given a C$^\ast$-dynamical system $(\mathcal{A},\mathbb{R}_d,\alpha)$, 
fix $\lambda\in \mathbb{T}$. Then the map $U:s\mapsto(\delta_s\otimes\lambda^s\cdot 1)$ is a unitary representation of $\mathbb{R}_d$ , such that 
\begin{align*}
U_s\iota(A) U_s^\ast=(\delta_s\otimes\lambda^s\cdot 1) (\delta_0\otimes A)
(\delta_{-s}\otimes\overline{\lambda^s}\cdot 1)=
(\delta_0\otimes\alpha_s(A))=\iota(\alpha_s(A)),
\end{align*}
for every $A\in\mathcal{A}$. Hence the pair $(\iota,U)$ is a covariant representation of $(\mathcal{A},\mathbb{R}_d,\alpha)$, and so the universal property of $\mathcal{A}\times_\alpha \mathbb{R}_d$ induces an automorphism:
\begin{align*}
\phi_\lambda:\mathcal{A}\times_\alpha \mathbb{R}_d\rightarrow \mathcal{A}\times_\alpha \mathbb{R}_d:
(\delta_s\otimes A)\mapsto (\delta_s\otimes \lambda^s A).
\end{align*}
Moreover, given $X\in\mathcal{A}\times_\alpha \mathbb{R}_d$  the map $t\mapsto \phi_{e^{it}}(X)$ is norm continuous for every $t\in \mathbb{R}$; indeed, one can check it first on the unclosed algebra of trigonometric polynomials and extend it to the closure by a standard approximation argument.
 So, given $T>0$,  we can define 
\begin{align*}
\Phi_T(X)=\frac{1}{2T}\int_{-T}^{T} \phi_{e^{it}}(X)dt.
\end{align*}

Check that $\|\Phi_T(X)\|\leq  \frac{1}{2T}\int_{-T}^{T} \|\phi_{e^{it}}(X)\|dt=\|X\|$, so $\|\Phi_T\|\leq 1$.
Given a trigonometric polynomial $Y=\sum\limits_{\substack{s\in F \\ F \subset\subset \mathbb{R}}}(\delta_s\otimes A_s)$ in $\mathcal{A}\times_\alpha \mathbb{R}_d$, we have
\begin{align*}
\Phi_T(Y)&=\frac{1}{2T}\int_{-T}^{T} \phi_{e^{it}}(Y) dt=\\
&=\frac{1}{2T}\int_{-T}^{T} \sum\limits_{\substack{s\in F \\ F \subset\subset \mathbb{R}}}(\delta_s\otimes e^{its}A_s) dt=\\&= \sum\limits_{\substack{s\in F \\ F \subset\subset \mathbb{R}}}
 (\delta_s\otimes A_s) \frac{1}{2T} \int_{-T}^{T} (\delta_0\otimes e^{its}\cdot 1) dt.
\end{align*}
Compute now the limit $\lim\limits_{T\rightarrow \infty} \frac{1}{2T} \int_{-T}^{T} (\delta_0\otimes e^{its}\cdot 1) dt$.
\begin{description} 
	\item [\textbullet $\,\,s=0.$]
	$\lim\limits_{T\rightarrow \infty} \frac{1}{2T} \int_{-T}^{T} (\delta_0\otimes 1) dt=
	\lim\limits_{T\rightarrow \infty} \frac{1}{2T} \cdot 2T (\delta_0\otimes 1) = (\delta_0\otimes 1)$;
	\item [\textbullet $\,\,s\neq 0.$]	
	$\lim\limits_{T\rightarrow \infty} \frac{1}{2T} \int_{-T}^{T} (\delta_0\otimes e^{its}\cdot 1) dt=
		\lim\limits_{T\rightarrow \infty} \frac{1}{2T} \frac{e^{iTs}- e^{-iTs}}{is} (\delta_0\otimes 1) \rightarrow (\delta_0\otimes 0)$, as $T\rightarrow\infty$.
\end{description}
Hence by the linearity of limits, we obtain that 
\begin{align*}
\lim\limits_{T\rightarrow \infty} \Phi_T(Y)= (\delta_0 \otimes A_0).
\end{align*}
Define now 
\begin{align*}
 \Phi_0(Y)=\lim\limits_{T\rightarrow \infty} \Phi_T(Y)= \lim\limits_{T\rightarrow \infty}
\frac{1}{2T}\int_{-T}^{T} \phi_{e^{it}}(Y) dt.
\end{align*}
 Since $\|\Phi_T(Y)\|\leq \|Y\|$ for all $T>0$, it follows that $\|\Phi_0(Y)\|\leq \|Y\|$, for every generalized trigonometric polynomial  $Y$. So $\Phi_0$ can be extended to a linear contraction in $\mathcal{A}\times_\alpha \mathbb{R}_d$. In addition, since the family of operators $\{\Phi_T\,:\, T>0\}$ is uniformly bounded, applying a simple approximation argument, it follows that $\Phi_0(X)=\lim\limits_{T\rightarrow \infty} \Phi_T(X)$. This proves the following result.

\begin{prop}\label{expeker}
Let $E_0$ be the expectation defined in Theorem \ref{conexp} and $X\in\mathcal{A}\times_\alpha \mathbb{R}_d$. Then $\Phi_0(X)=\lim\limits_{T\rightarrow \infty}
\frac{1}{2T}\int_{-T}^{T} \phi_{e^{it}}(X) dt =\iota(E_0(X))$. 
\end{prop}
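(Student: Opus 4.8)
The plan is to show that the two contractions $\Phi_0$ and $\iota\circ E_0$ agree on the dense subalgebra of generalized trigonometric polynomials and then invoke continuity. By the construction carried out just before the statement, for a trigonometric polynomial $Y=\sum_{s\in F}(\delta_s\otimes A_s)$ we have already computed that $\Phi_0(Y)=\lim_{T\to\infty}\Phi_T(Y)=(\delta_0\otimes A_0)$, using that the scalar averages $\frac{1}{2T}\int_{-T}^T e^{its}\,dt$ converge to $1$ if $s=0$ and to $0$ if $s\neq0$. On the other hand, $E_0(Y)=A_0$ by the defining formula \eqref{expectinag}, so $\iota(E_0(Y))=(\delta_0\otimes A_0)$. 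Hence $\Phi_0$ and $\iota\circ E_0$ coincide on $\mathcal{A}\mathbb{R}_d$.

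Next I would pass to the closure. Both maps are contractions: $\|\Phi_0\|\leq1$ was noted above, $E_0$ is contractive by Proposition \ref{conexp} (it is an expectation, hence norm one), and $\iota$ is isometric. Two bounded linear maps on the Banach space $\mathcal{A}\times_\alpha\mathbb{R}_d$ that agree on the dense subset $\mathcal{A}\mathbb{R}_d$ agree everywhere, so $\Phi_0(X)=\iota(E_0(X))$ for all $X\in\mathcal{A}\times_\alpha\mathbb{R}_d$. Finally, the displayed equality $\Phi_0(X)=\lim_{T\to\infty}\frac{1}{2T}\int_{-T}^T\phi_{e^{it}}(X)\,dt$ for general $X$ is exactly the uniform-boundedness approximation argument already recorded in the paragraph preceding the statement: since $\{\Phi_T:T>0\}$ is uniformly bounded and the limit defining $\Phi_0$ holds on the dense subalgebra, a standard $\varepsilon/3$ estimate transfers it to all of $\mathcal{A}\times_\alpha\mathbb{R}_d$.

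There is essentially no hard part here; the proposition is a bookkeeping statement collecting facts assembled in the preceding discussion. The one point deserving a line of care is the interchange of the $T\to\infty$ limit with norm convergence for a general $X$ rather than a polynomial: one writes $\|\Phi_T(X)-\iota(E_0(X))\|\leq\|\Phi_T(X-Y)\|+\|\Phi_T(Y)-\iota(E_0(Y))\|+\|\iota(E_0(Y-X))\|$, chooses $Y$ with $\|X-Y\|$ small, bounds the first and third terms by $\|X-Y\|$ using $\|\Phi_T\|\leq1$ and $\|\iota\circ E_0\|\leq1$, and then lets $T\to\infty$ to kill the middle term. If anything were to require genuine work it would be verifying norm-continuity of $t\mapsto\phi_{e^{it}}(X)$ underlying the very definition of $\Phi_T(X)$, but that too has been asserted earlier in the excerpt.
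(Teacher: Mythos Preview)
Your proposal is correct and follows essentially the same approach as the paper: the proposition is indeed a bookkeeping statement, and the paper's ``proof'' is precisely the paragraph preceding it, which establishes $\Phi_0(Y)=(\delta_0\otimes A_0)=\iota(E_0(Y))$ on trigonometric polynomials and then extends to the closure by the uniform boundedness of $\{\Phi_T\}$ via what it calls ``a simple approximation argument'' --- exactly your $\varepsilon/3$ estimate.
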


Applying standard arguments for kernels of approximating polynomials (cf. \cite{bes, lev-zhi}), we can  obtain the analogue of Bochner - Fejer's theorem.

Given a rationally independent set $\{\beta_1,\dots,\beta_m\}$ of real numbers and $X\in \mathcal{A}\times_\alpha \mathbb{R}_d$, one can define the \textbf{Bochner-Fejer polynomial} 
\begin{equation}\label{CesaroBF}
\sigma_{(\beta_1,\dots,\beta_m)}(X)=
\sum_{\substack{|\nu_1|<(m!)^2\\.........\\|\nu_m|<(m!)^2}}
\left(1-\frac{|\nu_1|}{(m!)^2}\right)\dots \left(1-\frac{|\nu_m|}{(m!)^2}\right)
(\delta_{\frac{\nu_1}{m!}\beta_1+\dots+ \frac{\nu_m}{m!}\beta_m}\otimes E_{{\frac{\nu_1}{m!}\beta_1+\dots+ \frac{\nu_m}{m!}\beta_m}}(X)).
\end{equation}
Note that a term of $\sigma_{(\beta_1, \dots,\beta_m)}(X)$ in \eqref{CesaroBF} differs from zero if and only if the respective Fourier coefficient of the term is nonzero.

\begin{prop}
\begin{align*}
\sigma_{(\beta_1, \dots,\beta_m)}(X)= \lim_{T\rightarrow \infty} \frac{1}{2T}\int_{-T}^{T} \phi_{e^{it}}(X) (\delta_0\otimes K_{(\beta_1, \dots,\beta_m)}(t))dt,
\end{align*}
where $K_{(\beta_1, \dots,\beta_m)}$ is the Bochner - Fejer kernel for almost periodic functions.
\end{prop}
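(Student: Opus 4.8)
The plan is to express the Bochner–Fejer polynomial $\sigma_{(\beta_1,\dots,\beta_m)}(X)$ as an average of the rotations $\phi_{e^{it}}(X)$ against the classical Bochner–Fejer kernel, reducing the statement to a computation on generalized trigonometric polynomials followed by an approximation argument. First I would recall the explicit form of the Bochner–Fejer kernel: for a rationally independent set $\{\beta_1,\dots,\beta_m\}$, $K_{(\beta_1,\dots,\beta_m)}(t)$ is the product $\prod_{j=1}^m F_{m!}\!\left(\tfrac{\beta_j t}{?}\right)$ of one–dimensional Fej\'er kernels, chosen so that its Fourier–Bohr coefficients are exactly the triangular weights $\prod_j\bigl(1-\tfrac{|\nu_j|}{(m!)^2}\bigr)$ at the frequencies $\tfrac{\nu_1}{m!}\beta_1+\dots+\tfrac{\nu_m}{m!}\beta_m$ with $|\nu_j|<(m!)^2$, and $0$ at all other frequencies. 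The key analytic input, borrowed from the classical theory (cf. \cite{bes, lev-zhi}), is that for any real $\gamma$,
\begin{align*}
\lim_{T\to\infty}\frac{1}{2T}\int_{-T}^{T} e^{i\gamma t}\, K_{(\beta_1,\dots,\beta_m)}(t)\,dt
= \begin{cases} \prod_{j=1}^m\left(1-\frac{|\nu_j|}{(m!)^2}\right), & \gamma=\sum_j\frac{\nu_j}{m!}\beta_j,\ |\nu_j|<(m!)^2,\\[1mm] 0, & \text{otherwise},\end{cases}
\end{align*}
which is just the statement that the mean value of a product of an exponential and a trigonometric polynomial picks out the matching coefficient.

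Next I would verify the identity on a generalized trigonometric polynomial $Y=\sum_{s\in F}(\delta_s\otimes A_s)$ with $F\subset\subset\mathbb{R}$. Exactly as in the computation preceding Proposition~\ref{expeker}, one has $\phi_{e^{it}}(Y)(\delta_0\otimes K_{(\beta_1,\dots,\beta_m)}(t)) = \sum_{s\in F}(\delta_s\otimes A_s)\,(\delta_0\otimes e^{its}K_{(\beta_1,\dots,\beta_m)}(t))$; pulling the $T\to\infty$ average inside the finite sum and applying the displayed limit with $\gamma=s$ yields precisely $\sum_{s\in F}(\delta_s\otimes A_s)$ times the appropriate weight, i.e.\ $\sigma_{(\beta_1,\dots,\beta_m)}(Y)$, using that $E_s(Y)=A_s$ and that only frequencies of the stated form survive. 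This is the heart of the matter and is a routine linearity-plus-mean-value argument once the kernel's coefficients are known.

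Finally I would pass from polynomials to general $X\in\mathcal{A}\times_\alpha\mathbb{R}_d$ by approximation. The right-hand side, as a function of $X$, is a contraction: the map $X\mapsto \frac{1}{2T}\int_{-T}^{T}\phi_{e^{it}}(X)(\delta_0\otimes K(t))\,dt$ has norm at most $\frac{1}{2T}\int_{-T}^{T}\|K(t)\|\,dt$, and since $K\geq 0$ with mean value $1$ this bound tends to $1$; more carefully, one uses that the rotations $\phi_{e^{it}}$ are isometric and that $t\mapsto\phi_{e^{it}}(X)$ is norm continuous (as established before Proposition~\ref{expeker}), so the integrals converge and are uniformly bounded in $T$. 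Likewise $X\mapsto\sigma_{(\beta_1,\dots,\beta_m)}(X)$ is bounded because each $E_s$ is contractive and the sum has boundedly many terms with weights in $[0,1]$ — here one should note, as the remark after \eqref{CesaroBF} points out, that the polynomial genuinely has only finitely many nonzero terms. Since trigonometric polynomials are dense, the identity extends by continuity.

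The main obstacle I anticipate is not the crossed-product bookkeeping but making the interchange of the $T\to\infty$ limit with the (infinite-dimensional, operator-valued) Bochner integral fully rigorous, and justifying the uniform-in-$T$ bound that licenses the density argument; this is where one must lean on the classical Bochner–Fejer machinery for almost periodic functions and the norm continuity of $t\mapsto\phi_{e^{it}}(X)$, rather than attempting an ad hoc estimate.
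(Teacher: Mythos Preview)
Your proof is correct, but it takes a slightly different and longer route than the paper's. The paper proves the identity directly for arbitrary $X\in\mathcal{A}\times_\alpha\mathbb{R}_d$, without any approximation step: it starts from the \emph{left-hand side}, uses that $\sigma_{(\beta_1,\dots,\beta_m)}(X)$ is by definition a \emph{finite} sum over the indices $\nu$ (not over the Fourier coefficients of $X$), rewrites each term $(\delta_{s_\nu}\otimes E_{s_\nu}(X))$ as $\Phi_0(X(\delta_{-s_\nu}\otimes 1))(\delta_{s_\nu}\otimes 1)$ via Proposition~\ref{expeker} (already established for all $X$), and then recombines the finite sum of integrals into a single integral against the kernel. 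You instead start from the \emph{right-hand side}, expand $\phi_{e^{it}}(Y)$ for a trigonometric polynomial $Y$ (using the finiteness of $Y$'s support), and then need a uniform-boundedness\,/\,density argument to pass to general $X$. Both approaches are valid; the paper's is more economical because it exploits the finiteness already built into $\sigma$ rather than imposing it on $X$, so the ``main obstacle'' you anticipate (justifying the limit interchange and uniform bound for general $X$) simply does not arise.
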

\begin{proof}
Fix $n$ and compute
\begin{align*}
&\sigma_{(\beta_1,\dots,\beta_m)}(X)=
\sum_{\substack{|\nu_1|<(m!)^2\\.........\\|\nu_m|<(m!)^2}}
\left(1-\frac{|\nu_1|}{(m!)^2}\right)\dots \left(1-\frac{|\nu_m|}{(m!)^2}\right)
(\delta_{\frac{\nu_1}{m!}\beta_1+\dots+ \frac{\nu_m}{m!}\beta_m}\otimes E_{\frac{\nu_1}{m!}\beta_1+\dots+ \frac{\nu_m}{m!}\beta_m}(X))=\\
&=\sum_{\substack{|\nu_1|<(m!)^2\\.........\\|\nu_m|<(m!)^2}}
\left(1-\frac{|\nu_1|}{(m!)^2}\right)\dots \left(1-\frac{|\nu_m|}{(m!)^2}\right)
(\delta_0\otimes E_0(X(\delta_{-\frac{\nu_1}{m!}\beta_1-\dots- \frac{\nu_m}{m!}\beta_m}\otimes 1)))
(\delta_{\frac{\nu_1}{m!}\beta_1+\dots+ \frac{\nu_m}{m!}\beta_m}\otimes 1)=\\&=
\sum_{\substack{|\nu_1|<(m!)^2\\.........\\|\nu_m|<(m!)^2}}
\left(1-\frac{|\nu_1|}{(m!)^2}\right)\dots \left(1-\frac{|\nu_m|}{(m!)^2}\right)
\Phi_0(X(\delta_{-\frac{\nu_1}{m!}\beta_1-\dots- \frac{\nu_m}{m!}\beta_m}\otimes 1))
(\delta_{\frac{\nu_1}{m!}\beta_1+\dots+ \frac{\nu_m}{m!}\beta_m}\otimes 1)=\\&=
%\lim_{T\rightarrow\infty}\frac{1}{2T} \int_{-T}^{T}
%\sum_{\substack{|\nu_1|<(m!)^2\\.........\\|\nu_m|<(m!)^2}}
%\left(1-\frac{|\nu_1|}{(m!)^2}\right)\dots \left(1-\frac{|\nu_m|}{(m!)^2}\right)
%\phi_e^{it}(X)\phi_e^{it}(\delta_{\frac{\nu_1}{m!}\beta_1+\dots+ \frac{\nu_m}{m!}\beta_m}\otimes 1)
%(\delta_{\frac{\nu_1}{m!}\beta_1+\dots+ \frac{\nu_m}{m!}\beta_m}\otimes 1) dt=\\&=
\lim_{T\rightarrow\infty}\frac{1}{2T} \int_{-T}^{T}
\sum_{\substack{|\nu_1|<(m!)^2\\.........\\|\nu_m|<(m!)^2}}
\left(1-\frac{|\nu_1|}{(m!)^2}\right)\dots \left(1-\frac{|\nu_m|}{(m!)^2}\right)
\phi_{e^{it}}(X) (\delta_0\otimes e^{-it({\frac{\nu_1}{m!}\beta_1+\dots+ \frac{\nu_m}{m!}\beta_m})}\cdot 1) dt=\\&=
\lim_{T\rightarrow\infty}\frac{1}{2T} \int_{-T}^{T}
\phi_{e^{it}}(X) (\delta_0\otimes K_{(\beta_1,\dots,\beta_m)}(t)\cdot 1) dt.
\end{align*}
\end{proof}

\begin{cor}\label{conces}
For every finite rationally independent set $\{\beta_1,\dots,\beta_m\}$, the map $\sigma_{(\beta_1,\dots,\beta_m)}$ is contractive.
\end{cor}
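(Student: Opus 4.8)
The plan is to deduce contractivity of $\sigma_{(\beta_1,\dots,\beta_m)}$ directly from the integral representation established in the preceding proposition, together with the basic positivity properties of the Bochner–Fejer kernel. Recall that the Bochner–Fejer kernel $K_{(\beta_1,\dots,\beta_m)}(t)$ is a nonnegative trigonometric polynomial whose mean value over $\bR$ equals $1$; that is, $K_{(\beta_1,\dots,\beta_m)}(t)\geq 0$ for all $t$ and $\lim_{T\rightarrow\infty}\frac{1}{2T}\int_{-T}^{T}K_{(\beta_1,\dots,\beta_m)}(t)\,dt=1$. These are exactly the facts that make the Bochner–Fejer kernel the almost periodic analogue of the classical Fejér kernel, and they are available from the references \cite{bes,lev-zhi} cited just above.

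First I would fix $X\in\mathcal{A}\times_\alpha\bR_d$ and $T>0$ and estimate the norm of the integral $\frac{1}{2T}\int_{-T}^{T}\phi_{e^{it}}(X)(\delta_0\otimes K_{(\beta_1,\dots,\beta_m)}(t)\cdot 1)\,dt$ inside the $C^\ast$-algebra. Since each $\phi_{e^{it}}$ is an isometric automorphism (it is the automorphism induced by a covariant representation, hence norm-preserving), we have $\|\phi_{e^{it}}(X)(\delta_0\otimes K_{(\beta_1,\dots,\beta_m)}(t)\cdot 1)\|\leq K_{(\beta_1,\dots,\beta_m)}(t)\,\|X\|$, using that $K_{(\beta_1,\dots,\beta_m)}(t)\geq 0$ so that $\|\delta_0\otimes K_{(\beta_1,\dots,\beta_m)}(t)\cdot 1\|=K_{(\beta_1,\dots,\beta_m)}(t)$. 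Then the triangle inequality for the Bochner integral gives
\begin{align*}
\left\|\frac{1}{2T}\int_{-T}^{T}\phi_{e^{it}}(X)(\delta_0\otimes K_{(\beta_1,\dots,\beta_m)}(t)\cdot 1)\,dt\right\|\leq \frac{1}{2T}\int_{-T}^{T}K_{(\beta_1,\dots,\beta_m)}(t)\,dt\cdot\|X\|.
\end{align*}
Letting $T\rightarrow\infty$, the right-hand side tends to $\|X\|$ because the mean value of $K_{(\beta_1,\dots,\beta_m)}$ is $1$; combined with the identity from the previous proposition, this yields $\|\sigma_{(\beta_1,\dots,\beta_m)}(X)\|\leq\|X\|$, which is the claim.

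The only genuine point to be careful about is the interchange of the norm with the limit $T\rightarrow\infty$: we need $\|\lim_{T\to\infty}(\cdot)\|=\lim_{T\to\infty}\|(\cdot)\|$, which is immediate by continuity of the norm once we know the limit defining $\sigma_{(\beta_1,\dots,\beta_m)}(X)$ exists in norm — and that existence is exactly the content of the proposition immediately preceding this corollary. A secondary routine matter is justifying that $\|\delta_0\otimes c\cdot 1\|=|c|$ for a scalar $c$, which follows from the isometric embedding $\iota$ of $\mathcal{A}$ (here applied to $c\cdot 1$) established in the preliminaries. I do not anticipate a substantive obstacle; the work was already done in setting up the Bochner–Fejer integral representation, and this corollary is essentially a packaging of the positivity and normalization of the kernel.
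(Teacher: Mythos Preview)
Your argument is correct and is essentially identical to the paper's own proof: both use the integral representation from the preceding proposition, bound the integrand by $K_{(\beta_1,\dots,\beta_m)}(t)\,\|X\|$ via the isometry of $\phi_{e^{it}}$ and the nonnegativity of the kernel, and then invoke that the mean value of $K_{(\beta_1,\dots,\beta_m)}$ is $1$. The extra care you take with the limit interchange and the norm of $\delta_0\otimes c\cdot 1$ is fine but not needed beyond what the paper already records.
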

Let $X\in \mathcal{A}\times_\alpha \mathbb{R}_d$. By the previous lemma we have
\begin{align*}
 \|\sigma_{(\beta_1,\dots,\beta_m)}(X)\|&\leq  \lim_{T\rightarrow\infty}\frac{1}{2T} \int_{-T}^{T}
\|\phi_{e^{it}}(X)\| \|(\delta_0\otimes K_{(\beta_1,\dots,\beta_m)}(t)\cdot 1)\| dt=\\&=
 \lim_{T\rightarrow\infty}\frac{1}{2T} \int_{-T}^{T}
 K_{(\beta_1,\dots,\beta_m)}(t) dt \|X\|=\|X\|.
\end{align*}

Let now $X\in \mathcal{A}\times_\alpha \mathbb{R}_d$ and $(Y_n)_n$ be a sequence of generalized trigonometric polynomials in $\mathcal{A}\times_\alpha \mathbb{R}_d$ that converges to $X$. Define $\Gamma=\cup_n \Gamma_{n}$ as in the proof of Proposition \ref{countnumfoco}
and  let $B=(\beta_1,\beta_2,\dots,\beta_m,\dots)$ be a rational basis of $\Gamma$.

\begin{thm}\label{Cesaro covergence} 
$\sigma_{(\beta_1,\dots,\beta_m)}(X)\stackrel{\|\cdot\|}{\rightarrow}X$, as $M\rightarrow \infty$.
\end{thm}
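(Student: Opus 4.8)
The plan is to prove that the Bochner–Fejer polynomials $\sigma_{(\beta_1,\dots,\beta_m)}(X)$ converge in norm to $X$ as $m\to\infty$ (running along the rational basis $B$ of $\Gamma$), by combining three ingredients already assembled in the excerpt: the uniform contractivity of the maps $\sigma_{(\beta_1,\dots,\beta_m)}$ from Corollary \ref{conces}, the density of generalized trigonometric polynomials in $\mathcal{A}\times_\alpha\mathbb{R}_d$, and the classical fact that the Bochner–Fejer kernels furnish an approximate identity for almost periodic functions, so that $\sigma_{(\beta_1,\dots,\beta_m)}(Y)\to Y$ for each \emph{fixed} trigonometric polynomial $Y$. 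The standard $\varepsilon/3$ argument then closes the gap.

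First I would fix $X\in\mathcal{A}\times_\alpha\mathbb{R}_d$ and $\varepsilon>0$, and choose a generalized trigonometric polynomial $Y_n$ from the approximating sequence with $\|X-Y_n\|<\varepsilon/3$; note that by construction the exponents of $Y_n$ lie in $\Gamma_n\subseteq\Gamma$, hence are finite rational combinations of finitely many of the $\beta_j$'s. Next I would observe that once $m$ is large enough that $\{\beta_1,\dots,\beta_m\}$ rationally spans all exponents appearing in $Y_n$ and the truncation parameter $(m!)^2$ is large enough to retain every term of $Y_n$, the Cesàro-type weights $\bigl(1-\tfrac{|\nu_1|}{(m!)^2}\bigr)\cdots\bigl(1-\tfrac{|\nu_m|}{(m!)^2}\bigr)$ applied to the (finitely many) exponents of $Y_n$ tend to $1$; writing out $\sigma_{(\beta_1,\dots,\beta_m)}(Y_n)$ explicitly via \eqref{CesaroBF} and using $E_s(Y_n)=A_s$ shows $\|\sigma_{(\beta_1,\dots,\beta_m)}(Y_n)-Y_n\|\to 0$, so it is $<\varepsilon/3$ for $m$ large. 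Finally, for such $m$,
\begin{align*}
\|\sigma_{(\beta_1,\dots,\beta_m)}(X)-X\|
&\leq \|\sigma_{(\beta_1,\dots,\beta_m)}(X-Y_n)\| + \|\sigma_{(\beta_1,\dots,\beta_m)}(Y_n)-Y_n\| + \|Y_n-X\|\\
&\leq \|X-Y_n\| + \|\sigma_{(\beta_1,\dots,\beta_m)}(Y_n)-Y_n\| + \|Y_n-X\| < \varepsilon,
\end{align*}
where the first term is estimated using Corollary \ref{conces}.

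The main obstacle is the bookkeeping in the middle step: one must make precise the sense in which the index $m$ "increases" compatibly with the rational basis $B$, i.e.\ verify that for a fixed trigonometric polynomial $Y$ with exponents in $\Gamma$, all of those exponents are eventually captured as integer combinations of $\tfrac{\beta_1}{m!},\dots,\tfrac{\beta_m}{m!}$ with $|\nu_j|<(m!)^2$, so that the term survives the truncation with weight tending to $1$. This is exactly the content of the classical Bochner–Fejer construction for almost periodic functions (cf.\ \cite{bes, lev-zhi}), transported here through the expectation-valued Fourier coefficients $E_s$; once the combinatorial nesting $\Gamma_n\subseteq\Gamma$ and the choice of rational basis are fixed as in Proposition \ref{countnumfoco}, the convergence $\sigma_{(\beta_1,\dots,\beta_m)}(Y)\to Y$ for polynomials is a finite computation, and the contractivity bound does the rest uniformly. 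I would also remark that the limit is independent of the chosen basis and approximating sequence, since any two candidate limits agree on the dense set of trigonometric polynomials.
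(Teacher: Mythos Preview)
Your proposal is correct and follows essentially the same $\varepsilon/3$ strategy as the paper: contractivity of the $\sigma_{(\beta_1,\dots,\beta_m)}$ (Corollary \ref{conces}), convergence on a fixed trigonometric polynomial $Y_n$, and density. The only cosmetic difference is that the paper verifies $\sigma_{(\beta_1,\dots,\beta_m)}(Y_n)\to Y_n$ via the integral representation with the Bochner--Fejer kernel (pulling the finite sum outside and using that the mean of $e^{its}K_{(\beta_1,\dots,\beta_m)}(t)$ tends to $1$ for each exponent $s\in\Gamma$), whereas you argue directly from the explicit formula \eqref{CesaroBF} that the finitely many relevant weights tend to $1$; these are two phrasings of the same computation.
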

\begin{proof}
 We will show first that $\sigma_{(\beta_1,\dots,\beta_m)}(Y_n)\stackrel{\|\cdot\|}{\rightarrow}Y_n$, for every $n\in\mathbb{N}$. 
Fix some $n\in\mathbb{N}$. Suppose that $Y_n$ is the trigonometric polynomial $\sum\limits_{\substack{s\in F\\F\subset\subset\mathbb{R}}}(\delta_s\otimes A_s)$. Since $B$ is also a rational basis of the indices of the nonzero Fourier coefficients of $Y_n$ we have
\begin{align*}
&\sigma_{(\beta_1,\dots,\beta_m)}(Y_n)= \\&=
\lim_{T\rightarrow\infty}\frac{1}{2T} \int_{-T}^{T}
\phi_{e^{it}}(Y_n) (\delta_0\otimes K_{(\beta_1,\dots,\beta_m)}(t)\cdot 1) dt=\\&=
\lim_{T\rightarrow\infty}\frac{1}{2T}\int_{-T}^T
\sum_{\substack{s\in F\\F\subset\subset\mathbb{R}}}(\delta_s\otimes e^{its}A_s)
(\delta_0\otimes K_{(\beta_1,\dots,\beta_m)}(t)\cdot 1)dt=\\& =\sum_{\substack{s\in F\\F\subset\subset\mathbb{R}}}\left((\delta_s\otimes A_s)\left(\lim_{T\rightarrow\infty}\frac{1}{2T}\int_{-T}^T e^{its}(\delta_0\otimes K_{(\beta_1,\dots,\beta_m)}(t)\cdot 1)dt \right) \right)\\&\rightarrow
\sum_{\substack{s\in F\\F\subset\subset\mathbb{R}}}(\delta_s\otimes A_s) (\delta_0\otimes 1)
=\sum_{\substack{s\in F\\F\subset\subset\mathbb{R}}} (\delta_s\otimes A_s).
\end{align*}
Given now $\epsilon>0$, choose trigonometric polynomial $Y_{n_0}$ with $\|X-Y_{n_0}\|<\epsilon/3$. Then there exists $m_0\in\mathbb{N}$, such that $\|Y_{n_0}-\sigma_{(\beta_1,\dots,\beta_m)}(Y_{n_0})\|\leq \epsilon/3$, for every $m>m_0$. Hence, it follows from Corollary \ref{conces} that for all $m>m_0$ we have
\begin{align*}
 \|X-\sigma_{(\beta_1,\dots,\beta_m)}(X)\|&\leq \|X-Y_{n_0}\|+\|Y_{n_0}-\sigma_{(\beta_1,\dots,\beta_m)}(Y_{n_0})\|+\|\sigma_{(\beta_1,\dots,\beta_m)}(Y_{n_0}-X)\|\leq\\&
\leq 2\|X-Y_{n_0}\|+\|Y_{n_0}-\sigma_{(\beta_1,\dots,\beta_m)}(Y_{n_0})\|\leq \epsilon.
\end{align*}
\end{proof}
\begin{cor}\label{uniqfoco}
Let $X\in\mathcal{A}\times_\alpha \mathbb{R}_d$, such that $E_s(X)=0$, for every $s\in \mathbb{R}$. Then $X=0$.
\end{cor}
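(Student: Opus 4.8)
The plan is to deduce this directly from the Bochner--Fej\'er approximation theorem (Theorem \ref{Cesaro covergence}). First I would fix a sequence $(Y_n)_n$ of generalized trigonometric polynomials in $\mathcal{A}\times_\alpha\mathbb{R}_d$ with $\|X-Y_n\|\to 0$; such a sequence exists because $\mathcal{A}\mathbb{R}_d$ is by construction dense in $\mathcal{A}\times_\alpha\mathbb{R}_d$. As in the proof of Proposition \ref{countnumfoco}, set $\Gamma=\cup_n\Gamma_n$, a countable subset of $\mathbb{R}$, and choose a rational basis $B=(\beta_1,\beta_2,\dots)$ of $\Gamma$, so that the hypotheses under which Theorem \ref{Cesaro covergence} is stated are in force.

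Next I would inspect the Bochner--Fej\'er polynomials attached to $X$. From formula \eqref{CesaroBF}, for each $m$ the element $\sigma_{(\beta_1,\dots,\beta_m)}(X)$ is a finite linear combination of terms of the form
\begin{align*}
\left(1-\tfrac{|\nu_1|}{(m!)^2}\right)\cdots\left(1-\tfrac{|\nu_m|}{(m!)^2}\right)\bigl(\delta_{r}\otimes E_{r}(X)\bigr),\qquad r=\tfrac{\nu_1}{m!}\beta_1+\dots+\tfrac{\nu_m}{m!}\beta_m .
\end{align*}
By hypothesis $E_r(X)=0$ for every $r\in\mathbb{R}$, so every such term equals $\delta_r\otimes 0=0$, and hence $\sigma_{(\beta_1,\dots,\beta_m)}(X)=0$ for all $m$.

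Finally, Theorem \ref{Cesaro covergence} gives $\sigma_{(\beta_1,\dots,\beta_m)}(X)\xrightarrow{\|\cdot\|}X$ as $m\to\infty$, so $X=\lim_m 0=0$, as required. I do not expect a genuine obstacle here: the corollary is essentially a reformulation of the uniqueness of the (generalized) Fourier expansion furnished by the Bochner--Fej\'er machinery, and the only point to check is that Theorem \ref{Cesaro covergence} is applicable, which reduces to the existence of a norm-approximating sequence of trigonometric polynomials and of a rational basis of the resulting countable index set $\Gamma$ --- both immediate.
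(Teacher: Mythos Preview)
Your proof is correct and follows essentially the same route as the paper: both argue that the Bochner--Fej\'er polynomials of $X$ vanish because each term in \eqref{CesaroBF} involves a Fourier coefficient $E_r(X)=0$, and then invoke Theorem \ref{Cesaro covergence} to conclude $X=0$. The only difference is that you spell out the construction of the rational basis $B$ explicitly, whereas the paper leaves this implicit (and phrases the vanishing via $\Phi_s(X)=\iota(E_s(X))=0$, which amounts to the same thing).
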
 
\begin{proof}
Since $E_s(X)=0$, for every $s\in \mathbb{R}$, it follows that $\Phi_s(X)=\iota(E_s(X))=0$, for every $t\in \mathbb{R}$. Hence the Bochner-Fejer polynomials of $X$ are trivial, so by Theorem \ref{Cesaro covergence} we have $X=0$.
\end{proof}
Recall now the left regular representation of the C$^\ast$-dynamical system, given by the formulas \eqref{lrrep1} and \eqref{lrrep2}. As we stated in Remark \ref{reducedcp} the left regular representation gives rise to the reduced crossed product. The following result comes readily from the previous theorem.

\begin{prop}\label{redeqfull}
Let $(\mathcal{A},\mathbb{R}_d,\alpha)$ be a C$^\ast$-dynamical system. Then the reduced crossed product 
$\mathcal{A}\times_\alpha^r \mathbb{R}_d$ coincides with the full crossed product 
$\mathcal{A}\times_\alpha \mathbb{R}_d$.
\end{prop}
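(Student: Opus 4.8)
The plan is to exhibit the canonical $\ast$-homomorphism from the full crossed product onto the reduced one and to prove it is injective by invoking the uniqueness of Fourier coefficients from Corollary~\ref{uniqfoco}. Since the reduced norm $\|\cdot\|_r$ is the norm coming from the single covariant representation $(\tilde\pi,\Lambda)$, it is dominated by the full crossed product norm on the $\ast$-algebra of generalized trigonometric polynomials; hence the identity map on that algebra extends to a contractive linear map $q\colon \mathcal{A}\times_\alpha\mathbb{R}_d \to \mathcal{A}\times_\alpha^r\mathbb{R}_d$, which is a surjective $\ast$-homomorphism because multiplication and involution are continuous and $q$ respects them on the dense subalgebra. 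It then suffices to show that $\ker q=\{0\}$, for then $q$ is an isometric $\ast$-isomorphism and the two completions agree.

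To do this I would first set up the reduced Fourier coefficient maps. By Remark~\ref{reducedcp} the contraction $E_0$ of \eqref{expectinag} extends to $\tilde E_0$ on $\mathcal{A}\times_\alpha^r\mathbb{R}_d$, and $\tilde E_s(Y):=\tilde E_0\bigl(Y(\delta_{-s}\otimes 1)\bigr)$ defines bounded maps into $\mathcal{A}$. The crucial point is the identity $\tilde E_s\circ q=E_s$ on all of $\mathcal{A}\times_\alpha\mathbb{R}_d$: both sides are norm continuous and, on a generalized trigonometric polynomial $\sum_t(\delta_t\otimes A_t)$, both return $A_s$, since $q$ fixes trigonometric polynomials; density then gives the identity everywhere.

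Now if $X\in\mathcal{A}\times_\alpha\mathbb{R}_d$ lies in $\ker q$, then $E_s(X)=\tilde E_s(q(X))=0$ for every $s\in\mathbb{R}$, so Corollary~\ref{uniqfoco} forces $X=0$. Thus $q$ is injective, hence isometric, completing the proof.

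I do not expect a serious obstacle: the real content --- replacing the usual amenability/invariant-mean averaging by the explicit Bochner--Fejer summation for $\mathbb{R}_d$ --- has already been carried out in Proposition~\ref{expeker}, Theorem~\ref{Cesaro covergence} and Corollary~\ref{uniqfoco}. The only points requiring care are formal: verifying $\|\cdot\|_r\le\|\cdot\|$ so that $q$ is well defined, and checking that $\tilde E_s$ is genuinely the continuous extension of the polynomial Fourier-coefficient map, so that $\tilde E_s\circ q=E_s$ holds globally and not merely on the trigonometric polynomials.
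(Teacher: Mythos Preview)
Your proposal is correct and follows essentially the same route as the paper: construct the canonical surjection $q$ from the full to the reduced crossed product, check that the Fourier-coefficient maps intertwine with $q$ by continuity from trigonometric polynomials, and then use Corollary~\ref{uniqfoco} to conclude $\ker q=\{0\}$. The only cosmetic difference is that the paper phrases the intertwining as $\tilde E_t\circ\phi=\phi\circ\Phi_t$ (with $\Phi_t=\iota\circ E_t$) and then invokes faithfulness of the left regular representation on $\mathcal{A}\mathbb{R}_d$, whereas you write the equivalent but slightly more direct identity $\tilde E_s\circ q=E_s$.
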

\begin{proof}
By the universal property of the full crossed product, there is a representation
\begin{align*}
\phi: \mathcal{A}\times_\alpha \mathbb{R}_d\rightarrow \mathcal{A}\times_\alpha^r \mathbb{R}_d :
(\delta_s\otimes A)\mapsto\tilde{\pi}(A)\Lambda_s.
\end{align*}
It suffices to show that $\phi$ is faithful. We need first to point out some observations about these two C$^\ast$-algebras. 

By Remark \ref{reducedcp}, one can define on 
$\mathcal{A}\times_\alpha^r \mathbb{R}_d$ the contractive maps
\begin{align*}
\tilde{E}_t: \mathcal{A}\times_\alpha^r \mathbb{R}_d\rightarrow A : 
\sum\limits_{\substack{s\in F \\ F \subset\subset \mathbb{R}}}\tilde{\pi}(A_s)\Lambda_s \rightarrow \tilde{\pi}(A_t).
\end{align*}
Let now $\{\Phi_t\,:\,t>0\}$ be the family of contractions on $\mathcal{A}\times_\alpha \mathbb{R}_d$, given by the formula
\begin{equation}\label{phitcont}
\Phi_t(X)=\Phi_0(X(\delta_{-t}\otimes 1)),
\end{equation} 
where $\Phi_0$ is the operator defined in Proposition \ref{expeker}. It follows by routine calculations 
on the subalgebra of trigonometric polynomials and standard density arguments that $\tilde{E}_t\circ \phi=\phi \circ \Phi_t$, for all $t\in\mathbb{R}$.

Let now $X\in \mathcal{A}\times_\alpha \mathbb{R}_d$, such that $\phi(X)=0$. Then $(\tilde{E}_t\circ \phi)(X)=0$
for every $t\in \mathbb{R}$, which implies that $(\phi \circ \Phi_t)(X)=0$. Since the left regular representation is a faithful representation of $\mathcal{A}\mathbb{R}_d$, it follows that $\Phi_t(X)=0$, for every $t\in \mathbb{R}$. Hence by Corollary \ref{uniqfoco} we have $X=0$.
\end{proof}
As a simple consequence of the above proposition we obtain the following useful inequality. Note that it essentially corresponds to the elementary fact that the norm of an $\mathbb{R}_d\times \mathbb{R}_d$ operator matrix $X$ dominitates the norm of any of its columns. 

\begin{prop}\label{estimforredcros}
Let $\mathcal{A}$ be a C$^\ast$-algebra acting on a Hilbert space $H$ and $\xi$ be a unit vector in $H$.
For every $X\in \mathcal{A}\times_\alpha\mathbb{R}_d$ and $F$ arbitrarily chosen finite subset of $\mathbb{R}$, we have
\begin{align*}
\| (\tilde{id}\rtimes \Lambda)(X)\|^2 -\sum\limits_{\substack{s\in F \\ F \subset\subset \mathbb{R}}}\|\alpha_{-s}(E_s(X))\xi\|^2\geq 0.
\end{align*}
\end{prop}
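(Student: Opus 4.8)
The plan is to test the operator $(\tilde{id}\rtimes\Lambda)(X)$, which acts on $\ell^2(\mathbb{R}_d,H)$, against the single unit vector $\delta_{0,\xi}=V_0\xi$ and to read off its coordinates. The first step is to record the elementary identity
\begin{align*}
V_s^\ast(\tilde{id}\rtimes\Lambda)(X)V_0=\alpha_{-s}(E_s(X)),\qquad s\in\mathbb{R},
\end{align*}
for every $X\in\mathcal{A}\times_\alpha\mathbb{R}_d$. For a trigonometric polynomial $X=\sum_{t}(\delta_t\otimes A_t)$ this is the computation already carried out for $E_0$: since $\Lambda_tV_0\xi=\delta_{t,\xi}$ and $\tilde{id}(A_t)\delta_{t,\xi}=\delta_{t,\alpha_{-t}(A_t)\xi}$, applying $V_s^\ast$ annihilates every term except the one with $t=s$ and leaves $\alpha_{-s}(A_s)\xi=\alpha_{-s}(E_s(X))\xi$. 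Both sides of the displayed identity are contractive functions of $X$ (the left because $\tilde{id}\rtimes\Lambda$ is a $\ast$-homomorphism and $\|V_0\|=\|V_s\|=1$, the right because $E_s$ is contractive by Proposition \ref{conexp} and $\alpha_{-s}$ is isometric), so the identity extends to all of $\mathcal{A}\times_\alpha\mathbb{R}_d$ by density of the trigonometric polynomials.

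Next I would use that $V_s^\ast y=y(s)$ is evaluation at $s$ of $y\in\ell^2(\mathbb{R}_d,H)$. Hence the vector $y:=(\tilde{id}\rtimes\Lambda)(X)V_0\xi$ has $y(s)=\alpha_{-s}(E_s(X))\xi$ for every $s$, so
\begin{align*}
\|(\tilde{id}\rtimes\Lambda)(X)V_0\xi\|_{\ell^2(\mathbb{R}_d,H)}^2=\sum_{s\in\mathbb{R}}\|\alpha_{-s}(E_s(X))\xi\|^2
\end{align*}
(the sum on the right having only countably many nonzero terms by Proposition \ref{countnumfoco}, though all that is needed is that it equals the square of an $\ell^2$-norm, hence is finite). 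Discarding the terms with $s\notin F$ and using $\|V_0\xi\|=\|\xi\|=1$ yields
\begin{align*}
\sum_{s\in F}\|\alpha_{-s}(E_s(X))\xi\|^2\leq\|(\tilde{id}\rtimes\Lambda)(X)V_0\xi\|^2\leq\|(\tilde{id}\rtimes\Lambda)(X)\|^2,
\end{align*}
which is precisely the asserted inequality.

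The only step that needs any care is the passage of the identity $V_s^\ast(\tilde{id}\rtimes\Lambda)(X)V_0=\alpha_{-s}(E_s(X))$ from trigonometric polynomials to arbitrary $X$; this is the routine approximation argument, resting on the fact that $(\tilde{id}\rtimes\Lambda)$ is a well-defined contractive representation of the full crossed product $\mathcal{A}\times_\alpha\mathbb{R}_d$ (it arises from the covariant pair $(\tilde{id},\Lambda)$ via the universal property) together with the continuity of each Fourier coefficient map $E_s$. Everything else is a direct computation, and, as the paper notes, the statement is merely the coordinatewise form of the elementary fact that the norm of an operator matrix dominates the norm of any one of its columns.
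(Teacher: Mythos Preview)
Your proof is correct and follows the same underlying idea as the paper's: test $(\tilde{id}\rtimes\Lambda)(X)$ on the unit vector $\delta_{0,\xi}$, use the coordinate identity $V_s^\ast(\tilde{id}\rtimes\Lambda)(X)V_0=\alpha_{-s}(E_s(X))$, and bound the norm of the column by the operator norm. Your execution is in fact more direct than the paper's, which introduces an auxiliary trigonometric polynomial $Y$ and reaches the same conclusion via a completing-the-square computation; your approach simply reads off the $\ell^2$-norm of the column and drops the terms outside $F$.
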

\begin{proof}
Applying Proposition \ref{redeqfull}, it suffices to prove the result for the reduced crossed product norm.  Let $\operatorname{id}$ be the identity representation of $\mathcal{A}$ on $H$ and $Y=\sum\limits_{\substack{s\in F \\ F \subset\subset \mathbb{R}}}(\delta_s\otimes A_s)$ be a generalized trigonometric polynomial in $\mathcal{A}\times_\alpha\mathbb{R}_d$. Note first that  
 \begin{align*}
&\bigg\|\big((\tilde{\operatorname{id}}\rtimes \Lambda)(X)- \sum_{\substack{s\in F \\ F \subset\subset \mathbb{R}}}(\tilde{\operatorname{id}}\rtimes \Lambda)(\delta_s\otimes A_s)\big)
\delta_{0,\xi}\bigg\|^2 
=\| (\tilde{\operatorname{id}}\rtimes \Lambda)(X)\delta_{0,\xi}\|^2+
\bigg\|\sum\limits_{\substack{s\in F \\ F \subset\subset \mathbb{R}}}\tilde{\operatorname{id}}(A_s)\delta_{s,\xi}\bigg\|^2 \\
&\,\,\,\,\,\,\,\,\,\,\,\,\,\,\,\,\,\,\,\,\,\,\,\,\,\,\,\,\,\,\,\,\,\,\,- \sum\limits_{\substack{s\in F \\ F \subset\subset \mathbb{R}}}\langle (\tilde{\operatorname{id}}\rtimes \Lambda)(X)\delta_{0,\xi}, 
\tilde{\operatorname{id}}(A_s)\delta_{s,\xi}\rangle - 
\sum\limits_{\substack{s\in F \\ F \subset\subset \mathbb{R}}}\langle  
\tilde{\operatorname{id}}(A_s)\delta_{s,\xi}, (\tilde{\operatorname{id}}\rtimes \Lambda)(X)\delta_{0,\xi}\rangle
\end{align*}
Since $\tilde{\operatorname{id}}(A_s)\delta_{s,\xi}= \delta_{s,\alpha_{-s}(A_s)\xi}=V_s (\alpha_{-s}(A_s)\xi)$ and $\delta_{s,\xi}$ is orthogonal to $\delta_{t,\eta}$ for $s\neq t$, it follows that  
\begin{align*}
&\bigg\|\big((\tilde{\operatorname{id}}\rtimes \Lambda)(X)- \sum_{\substack{s\in F \\ F \subset\subset \mathbb{R}}}(\tilde{\operatorname{id}}\rtimes \Lambda)(\delta_s\otimes A_s)\big)
\delta_{0,\xi}\bigg\|^2 
=
\| (\tilde{\operatorname{id}}\rtimes \Lambda)(X)\delta_{0,\xi}\|^2+
\sum\limits_{\substack{s\in F \\ F \subset\subset \mathbb{R}}}\|\alpha_{-s}(A_s)\xi\|^2 
\\&\,\,\,\,\,\,\,\,\,\,\,\,\,\,\,\,\,\,\,\,\,\,\,\,\,\,\,\,\,\,\,\,\,\,\,
-\sum\limits_{\substack{s\in F \\ F \subset\subset \mathbb{R}}}\langle V_s^\ast(\tilde{\operatorname{id}}\rtimes \Lambda)(X)V_0\xi, 
\alpha_{-s}(A_s)\xi\rangle -
\sum\limits_{\substack{s\in F \\ F \subset\subset \mathbb{R}}}\langle  
\alpha_{-s}(A_s)\xi, V_s^\ast(\tilde{\operatorname{id}}\rtimes \Lambda)(X)V_0\xi \rangle.
\end{align*}
One may check that  $V_s^\ast(\tilde{\operatorname{id}}\rtimes \Lambda)(X)V_0=\alpha_{-s}(E_s(X))$, so adding and subtracting $\sum\limits_{\substack{s\in F \\ F \subset\subset \mathbb{R}}}\|\alpha_{-s}(E_s(X))\xi\|^2$, we obtain that the above expression is equal to  
\begin{align*}
 \| (\tilde{\operatorname{id}}\rtimes \Lambda)(X)\delta_{0,\xi}\|^2 -\sum\limits_{\substack{s\in F \\ F \subset\subset \mathbb{R}}}\|\alpha_{-s}(E_s(X))\xi\|^2+
\sum\limits_{\substack{s\in F \\ F \subset\subset \mathbb{R}}}\|\alpha_{-s}(E_s(X))\xi-\alpha_{-s}(A_s)\xi\|^2.
\end{align*}
Note that the last formula takes its lowest value when $\sum\limits_{\substack{s\in F \\ F \subset\subset \mathbb{R}}}\|\alpha_{-s}(E_s(X))\xi-\alpha_{-s}(A_s)\xi\|^2=0$, which happens in the case we choose $A_s=E_s(X)$. Since the left hand side is non-negative, we deduce that
\begin{align*}
\| (\tilde{\operatorname{id}}\rtimes \Lambda)(X)\delta_{0,\xi}\|^2 -\sum\limits_{\substack{s\in F \\ F \subset\subset \mathbb{R}}}\|\alpha_{-s}(E_s(X))\xi\|^2\geq 0.
\end{align*}
\end{proof}

\subsection{Semicrossed products}
\begin{definition}
Let $(\mathcal{A}, G,\alpha)$ be a C$^\ast$-dynamical system. If $\mathcal{B}$ is a unital closed subalgebra of $\mathcal{A}$ and $G^+$ is a unital semigroup of $G$, we define the \textbf{semicrossed product} $\mathcal{B}\times_\alpha G^+$ as the closed subalgebra of the full crossed product, that is generated by the elements $(\delta_0\otimes b), (\delta_s\otimes 1)$, with $b\in \mathcal{B}$ and $s\in G^+$.
\end{definition}  
\begin{prop}\label{charwithfoucoe}
Let $(\mathcal{A}, \mathbb{R}_d,\alpha)$ be a C$^\ast$-dynamical system. Then the semicrossed product $\mathcal{A}\times_\alpha \mathbb{R}_d^+$ is equal to the set
\begin{align*}
\mathcal{A}^{\mathbb{R}^+}=\{X\in \mathcal{A}\times_\alpha \mathbb{R}_d : E_s(X)=0, \text{ for all }s<0\}.
\end{align*} 
\end{prop}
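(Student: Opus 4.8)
The plan is to prove the two inclusions $\mathcal{A}\times_\alpha\mathbb{R}_d^+ \subseteq \mathcal{A}^{\mathbb{R}^+}$ and $\mathcal{A}^{\mathbb{R}^+}\subseteq \mathcal{A}\times_\alpha\mathbb{R}_d^+$ separately. The first inclusion is the routine one. On generalized trigonometric polynomials supported on $\mathbb{R}_+$, namely finite sums $\sum_{s\in F,\,F\subset\subset\mathbb{R}_+}(\delta_s\otimes A_s)$, the Fourier coefficients $E_k$ vanish for $k<0$ by the explicit formula $E_k(\sum(\delta_s\otimes A_s))=A_k$ noted earlier. The semicrossed product $\mathcal{A}\times_\alpha\mathbb{R}_d^+$ is by definition the norm-closure of the algebra generated by the elements $(\delta_0\otimes b)$ and $(\delta_s\otimes 1)$ with $s\geq 0$; products of such generators are again monomials $(\delta_s\otimes A)$ with $s\geq 0$, so this generated algebra consists precisely of the $\mathbb{R}_+$-supported trigonometric polynomials. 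Since each $E_k$ is norm-continuous (it is contractive, being $E_0$ composed with right multiplication by the unitary $(\delta_{-k}\otimes 1)$), the condition $E_k(X)=0$ for $k<0$ passes to norm limits, giving $X\in\mathcal{A}^{\mathbb{R}^+}$.

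For the reverse inclusion, let $X\in\mathcal{A}\times_\alpha\mathbb{R}_d$ with $E_s(X)=0$ for all $s<0$. The key tool is Theorem \ref{Cesaro covergence}: choosing a sequence $(Y_n)$ of trigonometric polynomials with $Y_n\to X$, setting $\Gamma=\cup_n\Gamma_n$ and picking a rational basis $B=(\beta_1,\beta_2,\dots)$ of $\Gamma$, we have $\sigma_{(\beta_1,\dots,\beta_m)}(X)\to X$ in norm as $m\to\infty$. Now inspect the Bochner-Fejer polynomial $\sigma_{(\beta_1,\dots,\beta_m)}(X)$ from \eqref{CesaroBF}: it is a finite sum of monomials $(\delta_{\gamma}\otimes E_\gamma(X))$ over frequencies $\gamma=\frac{\nu_1}{m!}\beta_1+\dots+\frac{\nu_m}{m!}\beta_m$, and a term survives only when its Fourier coefficient $E_\gamma(X)$ is nonzero. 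By hypothesis $E_\gamma(X)=0$ whenever $\gamma<0$, so every surviving term has $\gamma\geq 0$; hence each $\sigma_{(\beta_1,\dots,\beta_m)}(X)$ is an $\mathbb{R}_+$-supported trigonometric polynomial, i.e. lies in the generated algebra, and therefore its norm-limit $X$ lies in $\mathcal{A}\times_\alpha\mathbb{R}_d^+$.

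I do not expect a serious obstacle here, since the heavy lifting — the Bochner-Fejer approximation — is already packaged in Theorem \ref{Cesaro covergence}. The one point to be slightly careful about is that the rational basis $B$ is chosen adapted to the approximating sequence $(Y_n)$ rather than to $X$ directly; but Theorem \ref{Cesaro covergence} is stated exactly for that choice, so this is not an issue. A minor auxiliary remark worth including is the observation that the generated algebra (before closure) is exactly $\{\sum_{s\in F,\,F\subset\subset\mathbb{R}_+}(\delta_s\otimes A_s)\}$, which follows from the multiplication rule $(\delta_s\otimes A)(\delta_t\otimes B)=(\delta_{s+t}\otimes A\alpha_s(B))$ together with the fact that $\mathbb{R}_+$ is closed under addition; this makes both directions transparent.
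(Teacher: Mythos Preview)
Your proposal is correct and follows essentially the same route as the paper: the forward inclusion via closedness of $\bigcap_{s<0}\ker E_s$ (continuity of each $E_s$), and the reverse inclusion via Theorem~\ref{Cesaro covergence}, noting that the Bochner--Fejer polynomials of $X$ only involve monomials $(\delta_\gamma\otimes E_\gamma(X))$ with $E_\gamma(X)\neq 0$, hence with $\gamma\geq 0$. Your write-up is in fact more explicit than the paper's (e.g.\ spelling out why the unclosed generated algebra is exactly the set of $\mathbb{R}_+$-supported polynomials), but the underlying argument is the same.
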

\begin{proof}
If $X$ is a trigonometric polynomial in $\mathcal{A}\times_\alpha \mathbb{R}_d^+$, then it is trivial to see that $X$ lies in $\mathcal{A}^{\mathbb{R}^+}$. The latter set is closed, since it is the intersection of the kernels $\operatorname{ker} E_s$ for all $s>0$, so the first inclusion is proved. For the converse inclusion, suppose $X\in \mathcal{A}^{\mathbb{R}^+}$. If $X=0$, there is nothing to prove. If $X\neq 0$, then the only nonzero Fourier coefficients of $X$  have nonnegative indices, so the Fejer-Bochner polynomials of $X$ lie in  $\mathcal{A}\times_\alpha \mathbb{R}_d^+$. Hence by Theorem \ref{Cesaro covergence} we have that $X\in \mathcal{A}\times_\alpha \mathbb{R}_d^+$.
\end{proof}
The following corollary follows trivially by routine calculations on the generalized trigonometric polynomials of the semicrossed product algebra.
\begin{cor}\label{contrexpmult}
Let $(\mathcal{A}, \mathbb{R}_d,\alpha)$ be a C$^\ast$-dynamical system. The restriction of the expectation $E_0$ to $\mathcal{A}\times_\alpha \mathbb{R}_d^+$ is a contractive homomorphism onto $\mathcal{A}$.
\end{cor}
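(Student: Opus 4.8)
The plan is to verify the three requirements separately — contractivity, surjectivity onto $\mathcal{A}$, and multiplicativity — with the first two essentially automatic and the third reduced, via a density argument, to a one-line computation on trigonometric polynomials.

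Contractivity and well-definedness come for free: by Proposition \ref{conexp} the map $E_0$ is a contraction on all of $\mathcal{A}\times_\alpha\mathbb{R}_d$ with range $\mathcal{A}$, so its restriction to the closed subalgebra $\mathcal{A}\times_\alpha\mathbb{R}_d^+$ is again a contraction into $\mathcal{A}$. Surjectivity onto $\mathcal{A}$ is immediate as well: for each $A\in\mathcal{A}$ the element $\iota(A)=(\delta_0\otimes A)$ is one of the generators of the semicrossed product, hence lies in $\mathcal{A}\times_\alpha\mathbb{R}_d^+$, and $E_0(\iota(A))=A$.

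The content is that $E_0$ is a homomorphism on $\mathcal{A}\times_\alpha\mathbb{R}_d^+$. I first check this on generalized trigonometric polynomials supported in $\mathbb{R}_+$: if $X=\sum_{s\in F}(\delta_s\otimes A_s)$ and $Y=\sum_{t\in F'}(\delta_t\otimes B_t)$ with $F,F'\subset\subset\mathbb{R}_+$, then the multiplication rule gives $XY=\sum_{s\in F,\,t\in F'}(\delta_{s+t}\otimes A_s\alpha_s(B_t))$, and since $s,t\ge 0$ we have $s+t=0$ only when $s=t=0$; hence $E_0(XY)=A_0B_0=E_0(X)E_0(Y)$. To pass to the closure I use that the generalized trigonometric polynomials with support in $\mathbb{R}_+$ are dense in $\mathcal{A}\times_\alpha\mathbb{R}_d^+$: by Proposition \ref{charwithfoucoe} any $X$ in the semicrossed product has $E_s(X)=0$ for all $s<0$, so by Theorem \ref{Cesaro covergence} its Bochner–Fej\'er polynomials $\sigma_{(\beta_1,\dots,\beta_m)}(X)$ — which are supported in $\mathbb{R}_+$ — converge to $X$ in norm. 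Given $X,Y\in\mathcal{A}\times_\alpha\mathbb{R}_d^+$, approximate each in norm by such polynomials $X_k,Y_k$; norm-continuity of multiplication yields $X_kY_k\to XY$, and norm-continuity of $E_0$ yields $E_0(X_kY_k)\to E_0(XY)$ and $E_0(X_k)E_0(Y_k)\to E_0(X)E_0(Y)$, so the polynomial identity forces $E_0(XY)=E_0(X)E_0(Y)$.

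I do not anticipate a real obstacle; the only point needing a little care is that the approximating polynomials can be taken with support inside $\mathbb{R}_+$, so that the polynomial-level computation applies — and this is precisely what the Bochner–Fej\'er construction of Theorem \ref{Cesaro covergence} delivers for elements singled out by Proposition \ref{charwithfoucoe}.
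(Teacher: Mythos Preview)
Your proof is correct and follows exactly the approach the paper indicates: the paper simply states that the corollary ``follows trivially by routine calculations on the generalized trigonometric polynomials of the semicrossed product algebra,'' and you have spelled out precisely those calculations together with the density argument via Bochner--Fej\'er polynomials that makes them stick.
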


\subsection{An example : the algebra of analytic almost periodic functions}
The theory of almost periodic functions was mainly created in 1925 by Bohr \cite{boh} and was substantially developed during the 1930s by Bochner, Besicovich, Stepanov, amongst others. The reader can refer to \cite{bes, lev-zhi} for more details. 

We recall that a continuous function $f:\mathbb{R}\rightarrow \mathbb{C}$ is almost periodic if and only if for every $\epsilon>0$ the set of $\epsilon$-translation numbers\footnote{A set $E\subseteq\mathbb{R}$ is called relatively dense if there exists $\lambda > 0$ such
that any interval of length $\lambda$ contains at least one element of $E$.} is relatively dense\footnote{Given a function $f:\mathbb{R}\rightarrow \mathbb{C}$
 and $\epsilon>0$, a real number $\tau$ is called an $\epsilon$-translation number of $f$, if
\begin{align*}
\sup_{t\in\mathbb{R}} |f(t + \tau) - f(t)| \leq \epsilon.
\end{align*}} 
in $\mathbb{R}$.  We denote by $AP(\mathbb{R})$ the algebra of almost periodic functions and we equip it with the supremum norm. Using a standard approximation argument, one can check that $AP(\mathbb{R})$ is a norm closed selfadjoint subalgebra of $C_b(\mathbb{R})$. Equivalently, we can define $AP(\mathbb{R})$ as the closure of the set of trigonometric polynomials 
\begin{align*}
p(x) =\sum_{k=1}^n c_k e^{i\lambda_k x},\text{ with }c_k \in \mathbb{C}, \lambda_k\in\mathbb{R}.
\end{align*}

We shall show that $AP(\mathbb{R})$ is isomorphic as a C$^\ast$-algebra to the crossed product $\mathbb{C}\times \mathbb{R}_d$ (with the trivial action). First we need the following well known result from elementary Gelfand theory. We include its proof for completeness. 

\begin{prop}\label{cgisom} Let $G$ be a discrete abelian group. The crossed product $\mathbb{C}\times G$ is isometrically isomorphic to the C$^\ast$-algebra $C(\hat{G})$ of continuous functions on the dual group $\hat{G}$.
\end{prop}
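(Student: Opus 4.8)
The plan is to exhibit the isomorphism $\mathbb{C}\times G\cong C(\hat G)$ by way of the Gelfand transform, identifying the spectrum of the abelian $C^\ast$-algebra $\mathbb{C}\times G$ with $\hat G$. First I would observe that, with the trivial action, the multiplication in $\mathbb{C}G$ reduces to $(\delta_s\otimes a)(\delta_t\otimes b)=\delta_{s+t}\otimes ab$ and the involution to $(\delta_s\otimes a)^\ast=\delta_{-s}\otimes\bar a$, so that $\mathbb{C}G$ is just the group algebra $\mathbb{C}[G]$ as a $\ast$-algebra, spanned by the unitaries $u_s:=\delta_s\otimes 1$ with $u_su_t=u_{s+t}$, $u_s^\ast=u_{-s}$. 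Consequently $\mathbb{C}\times G$ is a unital commutative $C^\ast$-algebra, hence by Gelfand–Naimark it is isometrically isomorphic to $C(\Omega)$ where $\Omega$ is its maximal ideal space (equivalently, its space of $\ast$-characters with the weak$^\ast$ topology, which is compact Hausdorff since the algebra is unital).

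Next I would set up the natural map between $\Omega$ and $\hat G$, the Pontryagin dual of the discrete group $G$ (the compact group of all group homomorphisms $G\to\mathbb{T}$, with the topology of pointwise convergence). Given a $\ast$-character $\chi$ of $\mathbb{C}\times G$, the assignment $s\mapsto\chi(u_s)$ is a homomorphism from $G$ into the unitaries of $\mathbb{C}$, i.e. into $\mathbb{T}$ (using $\chi(u_s)\chi(u_{-s})=\chi(u_0)=1$ and $|\chi(u_s)|\le 1$ for both $s$ and $-s$), so it lies in $\hat G$; this gives a map $\Omega\to\hat G$. Conversely, any $\omega\in\hat G$ extends linearly from $u_s\mapsto\omega(s)$ to a $\ast$-homomorphism $\mathbb{C}G\to\mathbb{C}$, which is contractive in the $\ell^1$-norm hence extends to the $C^\ast$-completion, yielding a point of $\Omega$; this is the inverse map. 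I would then check that both maps are continuous for the respective weak$^\ast$/pointwise topologies — continuity is immediate since evaluation of $\chi$ at the generators $u_s$ is exactly what controls both topologies — so $\Omega\cong\hat G$ as topological spaces, and therefore $C(\Omega)\cong C(\hat G)$.

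Finally I would record explicitly that, under this identification, the element $u_s=\delta_s\otimes 1$ of $\mathbb{C}\times G$ corresponds to the character function $\omega\mapsto\omega(s)$ on $\hat G$, and $\delta_0\otimes a$ to the constant function $a$; since these generate everything, the composite isomorphism $\mathbb{C}\times G\xrightarrow{\sim}C(\Omega)\xrightarrow{\sim}C(\hat G)$ is the desired isometric $\ast$-isomorphism. The only point requiring a little care — and the one I would expect to be the main (modest) obstacle — is confirming that \emph{every} character of the $C^\ast$-completion restricts to a \emph{unitary}-valued, hence $\hat G$-valued, function on the generators, and conversely that every $\omega\in\hat G$ does extend continuously past the $\ell^1$-norm to the full crossed product norm; both follow from the elementary facts that a $\ast$-character is contractive on a $C^\ast$-algebra and that $\|\sum c_su_s\|_{\ell^1}=\sum|c_s|$ dominates $|\sum c_s\omega(s)|$, but they are exactly where the "$C^\ast$" (as opposed to merely "$\ast$-algebra") hypotheses get used. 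No deep input beyond Gelfand theory and Pontryagin duality for discrete groups is needed.
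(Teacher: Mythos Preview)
Your proposal is correct and follows essentially the same route as the paper: both reduce to Gelfand theory and then identify $\mathfrak{M}(\mathbb{C}\times G)$ with $\hat G$ via $\chi\mapsto(s\mapsto\chi(u_s))$, with the inverse constructed by extending $\omega\in\hat G$ through the $\ell^1$-bound and the universal property. The only cosmetic difference is that the paper checks continuity of the map $\mathfrak{M}(\mathbb{C}\times G)\to\hat G$ in one direction and then invokes compactness of the domain to get a homeomorphism, whereas you propose to verify continuity in both directions directly.
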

\begin{proof}
The crossed product $\mathbb{C}\times G$ is a unital commutative algebra, so by the Gelfand transform it is isometrically isomorphic with $C(\mathfrak{M}(\mathbb{C}\times G))$. Hence it suffices to identify the character space $\mathfrak{M}(\mathbb{C}\times G)$ with $\hat{G}$.

If $\chi\in\mathfrak{M}(\mathbb{C}\times G)$, we may restrict to $G\subset \mathbb{C}G\subseteq \mathbb{C}\times G$. Since $\chi$ is multiplicative, $\chi\big|_G$ is a group homomorphism. Define
\begin{align*}  
\alpha: \mathfrak{M}(\mathbb{C}\times G)\rightarrow \hat{G}:\chi\mapsto \chi\big|_G.
\end{align*}
\begin{itemize}
	\item $\alpha$ is injective; indeed, if $\chi\big|_G=\phi\big|_G$ then it follows by linearity that $\chi\big|_{\mathbb{C}G}=\phi\big|_{\mathbb{C}G}$. Hence  $\chi=\phi$, since they are continuous on $\mathfrak{M}(\mathbb{C}\times G)$ and they coincide on a dense subset.
	\item  $\alpha$ is surjective; given $\chi\in \hat{G}$, define the $\ast$-homomorphism
	\begin{align*}
	\pi_\chi: \mathbb{C}G\rightarrow \mathbb{C}: \sum\limits_{\substack{s\in F \\ F \subset\subset G}}(\delta_g\otimes a_g)\mapsto
	 \sum\limits_{\substack{s\in F \\ F \subset\subset G}} \chi(g) a_g.
	\end{align*}
	Then  
	\begin{align*}
	\bigg\|\pi_\chi\left(\sum\limits_{\substack{s\in F \\ F \subset\subset G}}(\delta_g\otimes a_g)\right)\bigg\|&=
	\big\|\sum\limits_{\substack{s\in F \\ F \subset\subset G}} \chi(g) a_g\big\|\leq 
	\sum\limits_{\substack{s\in F \\ F \subset\subset G}} \|\chi(g)\|\,\|a_g\|=\\&=
	\sum\limits_{\substack{s\in F \\ F \subset\subset G}} \|a_g\|=\big\| \sum\limits_{\substack{s\in F \\ F \subset\subset G}}(\delta_g\otimes a_g)\big\|_{\ell_1}.
	\end{align*}
	So by the universal property of crossed products, we can extend $\pi_\chi$ to a nonzero representation of $\mathbb{C}\times G$ on $\mathbb{C}$.
	\item $\alpha$ is evidently continuous. Since its domain is a compact space, $\alpha$ is a homeomorphism.
\end{itemize}
\end{proof}

Set now $G$ equal to $\mathbb{R}_d$. Applying the above proposition we obtain that $\mathbb{C}\times \mathbb{R}_d$ is isometrically isomorphic with $C(\mathbb{R}_B)$, where $\mathbb{R}_B$ is the Bohr compactification of the real numbers. The latter C$^\ast$-algebra can be identified with $AP(\mathbb{R})$ \cite{shu}. 
In the next proposition we provide a proof, using the machinery of crossed products.
\begin{prop}
The commutative C$^\ast$-algebras $AP(\mathbb{R})$ and $C(\mathbb{R}_B)$ are isomorphic.
\end{prop}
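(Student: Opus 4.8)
The plan is to combine the two facts already assembled in this subsection. By Proposition \ref{cgisom} applied to the group $G=\mathbb{R}_d$, the crossed product $\mathbb{C}\times\mathbb{R}_d$ (with the trivial action) is isometrically $\ast$-isomorphic to $C(\widehat{\mathbb{R}_d})$, and the dual group $\widehat{\mathbb{R}_d}$ is by definition the Bohr compactification $\mathbb{R}_B$. Hence it suffices to produce an isometric $\ast$-isomorphism $AP(\mathbb{R})\cong\mathbb{C}\times\mathbb{R}_d$, which will chain with the previous one to give $AP(\mathbb{R})\cong C(\mathbb{R}_B)$.

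To construct that isomorphism, first I would work at the level of trigonometric polynomials. Define a map $\Psi$ on $\mathbb{C}\mathbb{R}_d=\mathrm{span}\{\delta_\lambda\otimes c : \lambda\in\mathbb{R},\ c\in\mathbb{C}\}$ by $\Psi(\delta_\lambda\otimes c)=c\,e_\lambda$, where $e_\lambda(x)=e^{i\lambda x}$, extended linearly. Since the action is trivial, the multiplication in $\mathbb{C}\mathbb{R}_d$ is $(\delta_\lambda\otimes c)(\delta_\mu\otimes d)=\delta_{\lambda+\mu}\otimes cd$, and the involution is $(\delta_\lambda\otimes c)^\ast=\delta_{-\lambda}\otimes\bar c$; these match $e_\lambda e_\mu=e_{\lambda+\mu}$ and $\overline{e_\lambda}=e_{-\lambda}$, so $\Psi$ is a $\ast$-homomorphism onto the dense subalgebra of trigonometric polynomials in $AP(\mathbb{R})$. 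The only real point is norm-preservation. One inequality is easy: $\|\Psi(Y)\|_\infty\le\|Y\|_{\ell_1}$ and hence, since the crossed product norm is dominated by the $\ell_1$-norm and $AP(\mathbb{R})$ carries the sup norm, $\Psi$ is at least contractive from the crossed product norm. For the reverse, I would use the Bochner--Fejer machinery: every element of $AP(\mathbb{R})$ is a sup-norm limit of Bochner--Fejer polynomials of itself, which are obtained by averaging against the nonnegative kernels $K_{(\beta_1,\dots,\beta_m)}$, exactly mirroring the averaging constructions (Proposition \ref{expeker} and the Cesaro-type estimates) used for the crossed product; pulling this back through $\Psi^{-1}$ shows the crossed-product norm of a trigonometric polynomial is also dominated by its sup norm, so $\Psi$ is isometric. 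Therefore $\Psi$ extends to an isometric $\ast$-isomorphism $\mathbb{C}\times\mathbb{R}_d\cong AP(\mathbb{R})$.

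The main obstacle is precisely establishing that $\Psi$ is isometric, i.e. identifying the universal crossed-product norm on $\mathbb{C}\mathbb{R}_d$ with the sup norm on trigonometric polynomials. The clean way is to invoke amenability of $\mathbb{R}_d$ (Proposition \ref{redeqfull}: reduced equals full), compute the reduced norm via the left regular representation on $\ell^2(\mathbb{R}_d)$, and check that under this representation a trigonometric polynomial acts as a normal operator whose spectrum, via the Gelfand theory for the commutative algebra, is the closure of the range of the corresponding function on characters — which is exactly its sup norm as an almost periodic function. Alternatively one can quote the known identification $C^\ast(\mathbb{R}_d)\cong C(\mathbb{R}_B)\cong AP(\mathbb{R})$ directly (as referenced via \cite{shu}), but since the stated goal is to give the crossed-product proof, I would spell out the Bochner--Fejer approximation step as above. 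Once the isometric identification of trigonometric polynomials is in hand, passing to completions is routine, and composing with the isomorphism of Proposition \ref{cgisom} finishes the proof.
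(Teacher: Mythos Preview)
Your overall strategy coincides with the paper's: identify $C(\mathbb{R}_B)$ with $\mathbb{C}\times\mathbb{R}_d$ via Proposition~\ref{cgisom}, and then build the map $\Psi:\mathbb{C}\times\mathbb{R}_d\to AP(\mathbb{R})$ sending $\delta_\lambda\otimes c\mapsto c\,e^{i\lambda x}$. The difference is in how you justify that $\Psi$ is an isometric isomorphism, and here your argument is both more laborious than necessary and contains a slip.

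First, the slip: from $\|\Psi(Y)\|_\infty\le\|Y\|_{\ell_1}$ together with $\|Y\|_{\mathrm{cp}}\le\|Y\|_{\ell_1}$ you cannot deduce that $\Psi$ is contractive for the crossed-product norm; those two inequalities point the same way. The correct (and immediate) reason $\Psi$ is contractive is that it \emph{is} $\pi\rtimes U$ for the covariant pair $\pi(c)=c\cdot 1$, $U_\lambda=e^{i\lambda x}$, so the universal property gives $\|\Psi(Y)\|\le\|Y\|_{\mathrm{cp}}$ directly---this is exactly how the paper produces the map.

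Second, your route to the reverse inequality is unnecessarily heavy and not clearly completed. The Bochner--Fejer averaging shows $\sigma_m(Y)\to Y$ and $\|\sigma_m(Y)\|\le\|Y\|$ on \emph{each} side separately, but it is not clear how ``pulling back through $\Psi^{-1}$'' yields $\|Y\|_{\mathrm{cp}}\le\|\Psi(Y)\|_\infty$ without already knowing something about $\Psi$. Your alternative via the left regular representation and Gelfand theory essentially re-derives Proposition~\ref{cgisom} and is circular in this context.

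The paper avoids all of this with one standard C$^\ast$-algebra fact you seem to have overlooked: an injective $\ast$-homomorphism between C$^\ast$-algebras is automatically isometric. So instead of proving isometry, the paper just proves $\Psi$ is injective. If $\Psi(X)=0$ then, using the intertwining $\epsilon_\lambda\circ\Psi=\Psi\circ E_\lambda$ (where $\epsilon_\lambda$ is the Bohr--Fourier coefficient on $AP(\mathbb{R})$), every Fourier coefficient $E_\lambda(X)$ vanishes; Corollary~\ref{uniqfoco} then forces $X=0$. That is the whole argument. I would recommend you replace your isometry discussion with this injectivity check.
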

\begin{proof}
By Proposition \ref{cgisom}, we identify $C(\mathbb{R}_B)$ with the crossed product $\mathbb{C}\times \mathbb{R}_d$.
Define the covariant representation $(\pi, U)$ of the C$^\ast$-dynamical system $(\mathbb{C},\mathbb{R}_d,\operatorname{id})$ by the formulas
\begin{align*}
\mathbb{C}\rightarrow AP(\mathbb{R}): c\mapsto c\cdot 1
\end{align*}
and
\begin{align*}
\mathbb{R}\rightarrow AP(\mathbb{R}): \lambda \mapsto e^{i\lambda x}.
\end{align*}
By the universal property of crossed products, we obtain a representation $\tilde{\pi}$  given by 
\begin{align*}
\tilde{\pi}:\mathbb{C}\times \mathbb{R}_d\rightarrow AP(\mathbb{R}):\sum\limits_{\substack{s\in F \\ F \subset\subset \mathbb{R}}}(\delta_s\otimes a_s)\mapsto
\sum\limits_{\substack{s\in F \\ F \subset\subset \mathbb{R}}}a_s\,e^{isx}.
\end{align*}
Let $X\in \mathbb{C}\times \mathbb{R}_d$, such that  $\tilde{\pi}(X)=0$. 
One can check that, as in the proof of Proposition \ref{redeqfull} that $(\tilde{\pi}\circ E_\lambda) (X)=(\epsilon_\lambda \circ \tilde{\pi})(X)$, where $\epsilon_\lambda$ is given by the formula 
\begin{align*}
\epsilon_\lambda(f) = \lim_{T\rightarrow\infty}\frac{1}{2T}\int_{-T}^{T} f(t)e^{-i\lambda t}dt.
\end{align*}
Hence it follows that $E_\lambda (X)=0$, for every $\lambda\in \mathbb{R}$, so we have by Theorem \ref{Cesaro covergence} that $X=0$.
Thus, $\tilde{\pi}$ is injective and the proof is complete.
\end{proof}

We focus now on the non-selfadjoint algebra of analytic almost periodic functions, that is the norm closed algebra generated by the functions $\{e^{i\lambda x} : \lambda \geq 0\}$.  Applying Proposition \ref{charwithfoucoe} and Corollary \ref{contrexpmult}, we have the following result.
\begin{prop}\label{charxinfty}
\begin{align*}
AAP(\mathbb{R})=\{f\in AP(\mathbb{R}): \epsilon_\lambda(f) =0 , \text{ for every }\lambda<0\}.
\end{align*}
Moreover, the compression of the contractive map $\epsilon_0$ to $AAP(\mathbb{R})$ is multiplicative; hence it induces a character 
$x_\infty$ that satisfies
\begin{align*} 
x_\infty\left(\sum
_{\substack{\lambda\in F \\ F\subset\subset \mathbb{R}^+}} 
c_\lambda e^{i\lambda x}\right)\mapsto c_0.
\end{align*}
\end{prop}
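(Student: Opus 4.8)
The plan is to transport the whole statement through the isometric $C^\ast$-algebra isomorphism $\tilde{\pi}:\mathbb{C}\times\mathbb{R}_d\to AP(\mathbb{R})$, $(\delta_s\otimes a)\mapsto a\,e^{isx}$, constructed in the previous proposition, together with the intertwining identity $\tilde{\pi}\circ E_\lambda=\epsilon_\lambda\circ\tilde{\pi}$ established there for every $\lambda\in\mathbb{R}$ on the whole crossed product. The first thing to do is to identify $AAP(\mathbb{R})$ with the image under $\tilde{\pi}$ of the semicrossed product $\mathbb{C}\times_{\operatorname{id}}\mathbb{R}_d^+$ of the trivial dynamical system $(\mathbb{C},\mathbb{R}_d,\operatorname{id})$. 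By definition this semicrossed product is the closed subalgebra of $\mathbb{C}\times\mathbb{R}_d$ generated by the elements $(\delta_0\otimes a)$, $a\in\mathbb{C}$, and $(\delta_s\otimes 1)$, $s\geq 0$. Since $\tilde{\pi}$ is an isometric isomorphism, it carries this closed subalgebra onto the closed subalgebra of $AP(\mathbb{R})$ generated by $\{a\cdot 1:a\in\mathbb{C}\}\cup\{e^{isx}:s\geq0\}$; as $1=e^{i0x}$ lies in that set, the latter is exactly the norm-closed algebra generated by $\{e^{i\lambda x}:\lambda\geq0\}$, namely $AAP(\mathbb{R})$. Hence $\tilde{\pi}\bigl(\mathbb{C}\times_{\operatorname{id}}\mathbb{R}_d^+\bigr)=AAP(\mathbb{R})$.

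Next I would invoke Proposition \ref{charwithfoucoe} for $(\mathbb{C},\mathbb{R}_d,\operatorname{id})$, which gives $\mathbb{C}\times_{\operatorname{id}}\mathbb{R}_d^+=\{X\in\mathbb{C}\times\mathbb{R}_d:E_s(X)=0\text{ for all }s<0\}$. Applying the bijection $\tilde{\pi}$ and rewriting $E_s(X)$ as $\tilde{\pi}^{-1}\bigl(\epsilon_s(\tilde{\pi}(X))\bigr)$ via the intertwining identity, the condition $E_s(X)=0$ becomes $\epsilon_s(\tilde{\pi}(X))=0$; setting $f=\tilde{\pi}(X)$ and combining with the identification of the previous paragraph yields exactly $AAP(\mathbb{R})=\{f\in AP(\mathbb{R}):\epsilon_\lambda(f)=0\text{ for all }\lambda<0\}$, which is the first assertion.

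For the second assertion I would use Corollary \ref{contrexpmult}, according to which $E_0$ restricted to $\mathbb{C}\times_{\operatorname{id}}\mathbb{R}_d^+$ is a contractive homomorphism onto $\mathbb{C}$. Conjugating by $\tilde{\pi}$ and using the intertwining identity with $\lambda=0$, one obtains $\epsilon_0|_{AAP(\mathbb{R})}=\tilde{\pi}\circ\bigl(E_0|_{\mathbb{C}\times_{\operatorname{id}}\mathbb{R}_d^+}\bigr)\circ\tilde{\pi}^{-1}$, so the compression of $\epsilon_0$ to $AAP(\mathbb{R})$ is a contractive homomorphism onto $\mathbb{C}$; since it is moreover unital ($\epsilon_0(1)=1$) it is a nonzero multiplicative linear functional, i.e. a character $x_\infty$. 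Finally, evaluating on a trigonometric polynomial $p=\sum_{\lambda\in F}c_\lambda e^{i\lambda x}$ with $F\subset\subset\mathbb{R}^+$ and using $\epsilon_0(e^{i\lambda x})=1$ for $\lambda=0$ and $=0$ otherwise gives $x_\infty(p)=c_0$; by continuity of $x_\infty$ and density of such polynomials in $AAP(\mathbb{R})$ this determines $x_\infty$ completely.

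Since $\tilde{\pi}$ and the intertwining relations have already been set up, the argument is essentially bookkeeping and I do not anticipate a genuine obstacle. The only points deserving a word of care are that an isometric isomorphism of $C^\ast$-algebras sends the closed subalgebra generated by a set to the closed subalgebra generated by the image set (immediate), and the fact — established in the proof of the previous proposition — that the transported maps $\tilde{\pi}\circ E_\lambda\circ\tilde{\pi}^{-1}$ coincide with the mean-value functionals $\epsilon_\lambda$ on all of $AP(\mathbb{R})$ rather than merely on trigonometric polynomials, which is what legitimizes the passage between the two descriptions.
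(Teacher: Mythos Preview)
Your proposal is correct and is exactly the approach the paper takes: the paper states the proposition as an immediate consequence of Proposition~\ref{charwithfoucoe} and Corollary~\ref{contrexpmult} applied to the trivial system $(\mathbb{C},\mathbb{R}_d,\operatorname{id})$, transported through the isomorphism $\tilde{\pi}$ and the intertwining relation $\tilde{\pi}\circ E_\lambda=\epsilon_\lambda\circ\tilde{\pi}$ from the preceding proposition. You have simply written out in full detail what the paper compresses into a one-line citation.
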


Let now $\phi_{c,k}$ be the continuous multiplicative linear map, given by the formula 
\begin{align*}
\phi_{c,k}(e^{i\lambda x}) = c(\lambda)e^{ik\lambda x},
\end{align*}
where $k > 0$ and $c : \mathbb{R}\mapsto \mathbb{T}$ homomorphism (so $c \in \mathbb{R}_B$). 
Arens proved in \cite{are} that the set of continuous automorphisms of $AAP(\mathbb{R})$ is equal to the set $\{\phi_{c,k}: c\in\mathbb{R}_B,  k\in\mathbb{R}^+\}$.

\begin{prop}\label{autapisomisom}
Every automorphism $\phi_{c,k}$ is isometric.
\end{prop}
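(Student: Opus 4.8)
The plan is to exploit the elementary fact that on a uniform algebra the norm coincides with the spectral radius, together with the fact that any algebra automorphism preserves spectral radii. Since $AAP(\mathbb{R})$ is a norm-closed subalgebra of the commutative C$^\ast$-algebra $AP(\mathbb{R})$ with the supremum norm, one has $\|f^n\|_\infty=\|f\|_\infty^n$ for every $f\in AAP(\mathbb{R})$ and every $n\geq 1$, and hence $\lim_n\|f^n\|^{1/n}=\|f\|$. This limit is the spectral radius $r(f)$, and it may be computed inside $AAP(\mathbb{R})$ itself, since $\lim_n\|f^n\|^{1/n}$ does not depend on the ambient Banach algebra (equivalently, the spectral radius relative to $AAP(\mathbb{R})$ agrees with that relative to $AP(\mathbb{R})$, which for the necessarily normal element $f$ of the commutative C$^\ast$-algebra $AP(\mathbb{R})$ equals $\|f\|_\infty$). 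Note also that $AAP(\mathbb{R})$ is unital, with unit $1=e^{i0x}$.

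First I would record that $\phi_{c,k}$ is a unital algebra automorphism of $AAP(\mathbb{R})$: it is linear and multiplicative, it is bijective by the result of Arens quoted above, and $\phi_{c,k}(1)=c(0)\,1=1$ because $c$ is a group homomorphism. An invertible unital algebra homomorphism carries invertible elements to invertible elements and reflects invertibility (using $\phi_{c,k}^{-1}$), so $\sigma_{AAP(\mathbb{R})}(\phi_{c,k}(f))=\sigma_{AAP(\mathbb{R})}(f)$ for every $f$, and in particular $r(\phi_{c,k}(f))=r(f)$. Combining this with the previous paragraph yields $\|\phi_{c,k}(f)\|=r(\phi_{c,k}(f))=r(f)=\|f\|$ for all $f\in AAP(\mathbb{R})$, which is the assertion.

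An alternative, more concrete route factors $\phi_{c,k}$ as $D_k\circ T_c$, where $D_k\colon f(x)\mapsto f(kx)$ is induced by the group automorphism $x\mapsto kx$ of $\mathbb{R}$, and $T_c$ is the automorphism of $AP(\mathbb{R})\cong C(\mathbb{R}_B)$ induced by translation of the Bohr compactification $\mathbb{R}_B=\widehat{\mathbb{R}_d}$ by the element $c$. Both maps are induced by homeomorphisms (respectively by an automorphism of $\mathbb{R}$) and are therefore supremum-norm isometries of $AP(\mathbb{R})$; since each merely multiplies the coefficient of $e^{i\lambda x}$ by a unimodular scalar and reparametrises $\lambda$, each leaves the subalgebra $AAP(\mathbb{R})$ invariant, so the composite restricts to an isometry of $AAP(\mathbb{R})$.

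There is no genuine difficulty here; the only points requiring a word of care are that the spectrum in the first argument is taken relative to the possibly proper subalgebra $AAP(\mathbb{R})$ rather than $AP(\mathbb{R})$ — harmless, since an automorphism of $AAP(\mathbb{R})$ respects the spectrum computed in $AAP(\mathbb{R})$ and the identity $r(f)=\|f\|$ persists inside any closed subalgebra of a commutative C$^\ast$-algebra — and, in the alternative argument, that the maps $D_k$ and $T_c$ genuinely preserve $AAP(\mathbb{R})$, which is immediate from their action on the generators $e^{i\lambda x}$, $\lambda\geq 0$.
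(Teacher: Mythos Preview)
Your proposal is correct. Both arguments work, and they illustrate two distinct routes to the same conclusion.

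Your second argument is essentially the paper's approach, phrased more concretely. The paper constructs the extension of $\phi_{c,k}$ to all of $AP(\mathbb{R})$ by appealing to the universal property of the crossed product $\mathbb{C}\times\mathbb{R}_d\cong AP(\mathbb{R})$: the pair $(\tilde{\operatorname{id}},u_{c,k})$ with $u_{c,k}(\lambda)=c(\lambda)e^{ik\lambda x}$ is a covariant representation, so it induces a $\ast$-homomorphism $\tilde{\operatorname{id}}\rtimes u_{c,k}$ of $AP(\mathbb{R})$, which is faithful (hence isometric) and restricts to $\phi_{c,k}$ on $AAP(\mathbb{R})$. Your factorisation $\phi_{c,k}=D_k\circ T_c$ produces exactly the same extension to $AP(\mathbb{R})$, but identifies it directly as a composition of a Bohr translation and a real dilation, each manifestly a sup-norm isometry. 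The paper's version has the advantage of staying within the crossed-product framework that the rest of the article uses; yours is more hands-on and avoids invoking the universal property.

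Your first argument, by contrast, is genuinely different from the paper's and is the more elementary of the two: it exploits only that $AAP(\mathbb{R})$ is a uniform algebra, so that $\|f\|=r(f)$, together with the purely algebraic fact that a unital algebra automorphism preserves spectra. This approach never leaves $AAP(\mathbb{R})$ and requires no extension to the enveloping C$^\ast$-algebra at all; it would apply verbatim to any automorphism of any uniform algebra. The trade-off is that it gives no information about how $\phi_{c,k}$ sits inside $\operatorname{Aut}(AP(\mathbb{R}))$, which the paper's construction (and your second argument) do provide and which is used later when the automorphisms $\tilde{\operatorname{id}}\rtimes u_{d,e^t}$ reappear in the description of $\operatorname{Aut}(A_p)$.
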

\begin{proof}
Fix some $c\in\mathbb{R}_B$ and $k\in\mathbb{R}^+$. One can check that $(\tilde{\operatorname{id}}, u_{c,k})$, where $\tilde{\operatorname{id}}:\mathbb{C}\rightarrow AP(\mathbb{R}): c \mapsto c\cdot 1_\mathbb{R}$ and $u_{c,k}:\mathbb{R}_d\rightarrow AP(\mathbb{R}):\lambda\mapsto c(\lambda)e^{ik\lambda  x}$, gives a covariant representation of the C$^\ast$-dynamical system $(\mathbb{C},\mathbb{R}_d,\operatorname{id})$. Hence by the universal property, we have a representation of the $C^\ast$-algebra  $\mathbb{C}\times \mathbb{R}_d\simeq C(\mathbb{R}_B)\simeq AP(\mathbb{R})$ of almost periodic functions, given by 
\begin{equation}\label{eqautapbab}
\tilde{\operatorname{id}}\rtimes u_{c,k}: AP(\mathbb{R})\rightarrow AP(\mathbb{R}): e^{i\lambda x} \mapsto  c(\lambda) e^{ik\lambda x}.
\end{equation}
The representation $\tilde{\operatorname{id}}\rtimes u_{c,k}$ is evidently faithful, so it is isometric. Moreover, the restriction of $\tilde{\operatorname{id}}\rtimes u_{c,k}$  to the invariant subalgebra $AAP(\mathbb{R})$ is equal to $\phi_{c,k}$, so the proof is complete.
\end{proof}

\section{The norm closed parabolic algebra $A_p$}

Let $(AP(\mathbb{R}),\mathbb{R}_d,\tau)$ be a C$^\ast$-dynamical system, where $\tau$ induces the group of translation automorphisms:
\begin{align*}
(\tau_s f)(x)=f(x-s), \,\, f\in AP(\mathbb{R}).
\end{align*}
Our goal in this section, is to prove that the abstract discrete crossed product $AP(\mathbb{R})\times_\tau \mathbb{R}_d$ is isometrically isomorphic to a concrete C$^\ast$-algebra acting on $L^2(\mathbb{R})$. 
%First, we need the next lemma.
\begin{prop}
The crossed product $AP(\mathbb{R})\times_\tau \mathbb{R}_d$ is a simple algebra, i.e. it has no non-trivial two-sided closed ideals.
\end{prop}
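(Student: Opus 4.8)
The plan is to use the standard criterion for simplicity of crossed products: a crossed product $\mathcal{A} \times_\alpha G$ of a C$^\ast$-algebra by a discrete abelian group is simple provided the action is \emph{minimal} (no non-trivial $G$-invariant closed ideals of $\mathcal{A}$) and \emph{topologically free} (for each $s \neq 0$, the set of characters of $\mathcal{A}$ fixed by $\alpha_s$ has empty interior in $\mathfrak{M}(\mathcal{A})$). Here $\mathcal{A} = AP(\mathbb{R})$, whose character space is the Bohr compactification $\mathbb{R}_B$, and the translation action $\tau$ lifts to the action of $\mathbb{R}_d$ on $\mathbb{R}_B$ by translation (the natural dense embedding $\mathbb{R} \hookrightarrow \mathbb{R}_B$ is equivariant). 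So the proof reduces to two dynamical facts about this action on $\mathbb{R}_B$.

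First I would verify minimality. The closed $\tau$-invariant ideals of $AP(\mathbb{R})$ correspond to closed $\mathbb{R}_d$-invariant subsets of $\mathbb{R}_B$. Since $\mathbb{R}$ is dense in $\mathbb{R}_B$ and $\mathbb{R}_B$ is a topological group, the orbit of any point under translation by $\mathbb{R}_d$ is a dense coset, so there are no non-trivial closed invariant subsets; equivalently, $AP(\mathbb{R})$ has no non-trivial $\tau$-invariant closed ideals. (Concretely: if $I$ is such an ideal and $f \in I$ is non-zero, then by almost periodicity the translates of $f$ cannot all vanish on any fixed point, and using approximation by trigonometric polynomials together with invariance one forces $I = AP(\mathbb{R})$.) Second, topological freeness: for $s \neq 0$, a character $\chi \in \mathbb{R}_B$ is fixed by $\tau_s$ iff translation by $s$ fixes $\chi$ in the group $\mathbb{R}_B$, which forces $s = 0$ in $\mathbb{R}_B$; but the embedding $\mathbb{R}_d \hookrightarrow \mathbb{R}_B$ is injective, so no $s \neq 0$ fixes any character. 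Hence the fixed-point set is empty, which is a fortiori nowhere dense, giving topological freeness.

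Alternatively — and this may be the cleaner route to write, given the machinery already developed — I would argue directly using the Fourier-coefficient calculus from Section 2. Let $J$ be a non-zero closed two-sided ideal; pick $0 \neq X \in J$. By Theorem \ref{Cesaro covergence}, some Bochner–Fejer polynomial $\sigma_{(\beta_1,\dots,\beta_m)}(X)$ is non-zero, and each $\sigma$ is built from the Fourier coefficients $E_s(X) \in AP(\mathbb{R})$ via multiplication by unitaries $(\delta_t \otimes 1)$ and averaging operations of the form $X \mapsto \Phi_0(X(\delta_{-t}\otimes 1))$; all of these operations preserve $J$ (the averaging because $J$ is norm-closed and the average is a norm limit of convex combinations of elements $\phi_{e^{it}}(\cdot)$, and $\phi_{e^{it}}$ maps $J$ to $J$ as an automorphism implemented by a unitary in the multiplier algebra — or one absorbs it into the ideal structure directly). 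So $J$ contains a non-zero element of the form $(\delta_t \otimes a)$ with $a \in AP(\mathbb{R})$, hence (multiplying by $(\delta_{-t}\otimes 1)$) a non-zero element $(\delta_0 \otimes a) = \iota(a)$ with $a \neq 0$. Then $J \cap \iota(AP(\mathbb{R}))$ is a non-zero ideal of $\iota(AP(\mathbb{R})) \cong AP(\mathbb{R})$ that is moreover invariant under conjugation by the unitaries $(\delta_s \otimes 1)$, i.e. a $\tau$-invariant ideal; by minimality it is all of $AP(\mathbb{R})$, so $\iota(1) \in J$ and $J$ is everything.

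The main obstacle I expect is the bookkeeping in showing that the ideal $J$ is preserved under the Bochner–Fejer averaging operators and under the maps $\phi_{e^{it}}$ and $\Phi_0$: one must be careful that these are not a priori given by multiplication by elements of the crossed product, but rather by unitaries in its multiplier algebra (or by integrals thereof), so closedness of $J$ plus a density/approximation argument is genuinely needed rather than a one-line algebraic remark. Once that is in place, reducing to the commutative statement that translation on $\mathbb{R}_B$ is minimal — which is elementary from density of $\mathbb{R}$ in the compact group $\mathbb{R}_B$ — finishes the argument.
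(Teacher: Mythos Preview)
Your proposal is correct, and both routes you sketch work. The paper takes your second (direct) route, but with one key simplification you did not notice: the gauge automorphisms $\phi_{e^{it}}$ are \emph{inner} here, implemented by conjugation by the element $(\delta_0\otimes e^{itx})$ of the crossed product itself (not merely its multiplier algebra). This is the special feature of translation acting on $AP(\mathbb{R})$ --- the underlying algebra already contains the characters $e^{itx}$, and a short computation on trigonometric polynomials gives $(\delta_0\otimes e^{itx})\,X\,(\delta_0\otimes e^{-itx})=\phi_{e^{it}}(X)$. So the ``main obstacle'' you flag evaporates: $\phi_{e^{it}}(J)\subseteq J$ is immediate from $J$ being a two-sided ideal, and then closedness of $J$ gives $\Phi_s(X)\in J$.

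For the minimality step the paper also argues more concretely than you do: rather than invoking density of $\mathbb{R}$ in $\mathbb{R}_B$, it takes a non-zero $f\in AP(\mathbb{R})$ with $(\delta_0\otimes f)\in J$, replaces $f$ by $|f|^2$ scaled so that $\|f\|>2$, and then uses the $\epsilon$-translation-number definition of almost periodicity to build a finite sum $g=\sum_{k=0}^{n-1}\tau_{kT/n}(f)$ with $\inf |g|>0$. Then $1/g\in AP(\mathbb{R})$ and $(\delta_0\otimes 1)\in J$. Your abstract argument via closed invariant subsets of $\mathbb{R}_B$ is cleaner; the paper's is self-contained. Your first route via the Archbold--Spielberg criterion is also fine --- indeed the paper cites exactly that result in a remark following the proof --- but the paper prefers the direct argument, presumably to keep the exposition independent of that literature.
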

\begin{proof}
Let $J$ be a non-zero two-sided closed ideal. Hence there exists an element $X\in J$, such that $\Phi_s(X)\neq 0$, for some $s\in\mathbb{R}$.  Using the integral formula $\Phi_0(X)=\lim\limits_{T\rightarrow +\infty}\frac{1}{2T}\int_{-T}^{T} \phi_{e^{it}}(X)dt$ that we proved in the previous section, we will prove that $\Phi_s(X)$ belongs to $J$.
Since $J$ is closed, it suffices to prove that $\phi_{e^{it}}(X)\in J$. Suppose first that $X$ is a generalized trigonometric polynomial 
$\sum\limits_{\substack{s\in F\\F\subset\subset\mathbb{R}}}(\delta_s\otimes f_s)$.
Compute the product
 \begin{align*}
 (\delta_0\otimes e^{itx})X(\delta_0\otimes e^{-itx})&=
 \sum\limits_{\substack{s\in F\\F\subset\subset\mathbb{R}}}(\delta_s\otimes e^{itx}f_s)(\delta_0\otimes e^{-itx})=\\&=
 \sum\limits_{\substack{s\in F\\F\subset\subset\mathbb{R}}}(\delta_s\otimes e^{itx}f_s   \tau_s(e^{-itx}))=\\&=
 \sum\limits_{\substack{s\in F\\F\subset\subset\mathbb{R}}}(\delta_s\otimes e^{itx}f_s e^{-it(x-s)})=\\&=
 \sum\limits_{\substack{s\in F\\F\subset\subset\mathbb{R}}}(\delta_s\otimes e^{its}f_s)=
 \phi_{e^{it}}(X).
 \end{align*}
Hence, it follows by a standard approximation argument that $(\delta_0\otimes e^{itx})X(\delta_0\otimes e^{-itx})=  \phi_{e^{it}}(X)$ for any $X\in  AP(\mathbb{R})\times_\tau \mathbb{R}_d$.

 Similarly, we get $\Phi_s(X)=\Phi(X(\delta_{-s}\otimes 1))\in J$, so there exists some nonzero $f\in AP(\mathbb{R})$, such that $(\delta_0\otimes f)\in J$.
Since the action of the group can be described by the product of the covariant relation, it follows $(\delta_0\otimes \tau_s(f))\in J$ for every $s\in\mathbb{R}$.

\textbf{Claim:} We may assume that $\inf\{|f(x)|\,:\, x\in\mathbb{R}\}\geq c >0$.

Since $f\cdot f^\ast, nf\in AP(\mathbb{R})$ for every $n\in \mathbb{N}$, we may assume 
that $f(x)\geq0$, for every $x\in\mathbb{R}$ and $\|f\|>2$. Let 
$\epsilon=\frac{1}{2}$. Then there is $T=T(\epsilon)>0$, such that for every interval $I$ of length $T$, there exists $\ell \in I$ that satisfies
\begin{align*}
|f(x+\ell)-f(x)|<\epsilon, \forall x\in \mathbb{R}.
\end{align*}
On the interval $[0,T]$, we may assume that $f(x)>1$, for every $x\in[0,\frac{T}{n}]$, for some $n\in\mathbb{N}$ (otherwise, work with $g=\tau_s(f)$, for suitable $s$). Then, let
$f_k=\tau_{k\frac{T}{n}}(f)$, for $k=0,1,\dots,n-1$ and define 
\begin{align*}
g(x)=\sum_{k=0}^{n-1}f_{k}(x), \,\,x\in\mathbb{R}.
\end{align*}
Then $g(x)>1$, for every $x\in[0,T]$. In the general case where $x\in\mathbb{R}$, there exists $\ell\in[x-T,x]$, such that $|f(x-\ell)-f(x)|<\epsilon$. Since $\ell$ gives that bound uniformly for all $y\in\mathbb{R}$, it yields that $|f_k(x-\ell)-f_k(x)|<\epsilon$, for every $k\in\{0,1,\dots,n-1\}$. Therefore, there exists some $k$, such that $|f_k(x)|>1-\epsilon=\frac{1}{2}$. Hence $g(x)>\frac{1}{2}$ and that completes the proof of our claim.

 Now, since the value $\inf\{|f(x)|: x\in\mathbb{R}\}$ is positive, the multiplicative inverse $1/f$ is a bounded almost periodic function \cite{bes}. Then
\begin{align*}
(\delta_0\otimes f)(\delta_0\otimes 1/f)=(\delta_0\otimes 1)\in I,
\end{align*}  
so $I$ coincides with the crossed product.
\end{proof}
\begin{rem}
The simplicity of crossed product algebras has been studied extensively over the last 50 years (see for example \cite{eff-hah, har-ska}). In particular, Archbold and Spielberg proved in \cite{arc-spi} that given a C$^\ast$-dynamical system $(\mathcal{A},G,\alpha)$, with $\mathcal{A}$ commutative and $G$ discrete, the crossed product $\mathcal{A}\times_\alpha G$ is simple if and only if the action of the group on $\mathcal{A}$ is minimal\footnote{The action of a group $G$ on a C$^\ast$-algebra $\mathcal{A}$ is called minimal if $\mathcal{A}$ does not contain any non-trivial G-invariant ideals.} and topologically free\footnote{An action $\alpha$ on a commutative algebra $\mathcal{A}$ is said to be topologically free if for any finite set $F\subseteq G\backslash\{e_G\}$, the set  $\cap_{t\in F}\{\chi\in \mathfrak{M}(\mathcal{A})|\chi\circ \alpha_t\neq \chi\}$ is dense in $\mathfrak{M}(\mathcal{A})$.}.
\end{rem}
\begin{definition}
Let $B_p$ be the C$^\ast$-algebra that is generated by the set of all the multiplication and translation operators $M_\lambda$ and $D_\mu$ acting on $L^2(\mathbb{R})$ respectively. 
Since the span of the products $M_\lambda D_\mu$ is closed under the operations of ring multiplication and involution, we get that 
\begin{align*}
B_p=\overline{\textrm{span}\{M_\lambda D_\mu\,:\,\lambda, \mu\in\mathbb{R}\}}^{\|\cdot\|}.
\end{align*}
\end{definition}
\begin{thm}
The C$^\ast$-algebras $AP(\mathbb{R})\times_\tau \mathbb{R}_d$ and $B_p$ are isomorphic.
\end{thm}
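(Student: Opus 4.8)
The plan is to exhibit a covariant representation of the C$^\ast$-dynamical system $(AP(\bR),\bR_d,\tau)$ acting on $L^2(\bR)$ whose integrated form has image exactly $B_p$, and then to prove that this integrated form is faithful, hence isometric. For the covariant representation, I would take $\pi$ to be the natural representation of $AP(\bR)$ as multiplication operators on $L^2(\bR)$ (which identifies the generating unitaries $e^{i\lambda x}$ with the operators $M_\lambda$), and $U$ to be the translation representation $s\mapsto D_s$. The covariance relation $D_s M_\lambda D_s^\ast = M_{?}$ must match $\pi(\tau_s(e^{i\lambda x}))$: since $(\tau_s f)(x)=f(x-s)$, we have $\tau_s(e^{i\lambda x})=e^{i\lambda(x-s)}=e^{-i\lambda s}e^{i\lambda x}$, and indeed $D_s M_\lambda D_s^\ast$ is multiplication by $e^{i\lambda(x-s)}$, so the pair $(\pi,D)$ is covariant. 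The universal property of the full crossed product then yields a $\ast$-representation $\rho=\pi\rtimes D:AP(\bR)\times_\tau\bR_d\to B(L^2(\bR))$ sending $(\delta_s\otimes e^{i\lambda x})\mapsto e^{-i\lambda s}M_\lambda D_s$; its range is the closed algebra generated by all $M_\lambda D_s$, which is precisely $B_p$ by the stated description. So $\rho$ is a surjective $\ast$-homomorphism onto $B_p$, and it remains to show it is injective.

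For injectivity, the natural route is the same argument used for Proposition \ref{redeqfull} and the $AP(\bR)\cong C(\bR_B)$ identification: intertwine $\rho$ with the Fourier-coefficient machinery. Concretely, I would first record that the gauge automorphisms $\phi_{e^{it}}$ of $AP(\bR)\times_\tau\bR_d$ (multiplying $(\delta_s\otimes f)$ by $e^{its}$) are implemented on $B_p$ by conjugation by the multiplication operators $M_t$: indeed the computation already carried out in the proof of simplicity shows $(\delta_0\otimes e^{itx})X(\delta_0\otimes e^{-itx})=\phi_{e^{it}}(X)$, so $\rho(\phi_{e^{it}}(X))=M_t\rho(X)M_t^\ast$. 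Averaging over $t\in[-T,T]$ and letting $T\to\infty$ gives, via Proposition \ref{expeker}, an operator-level expectation on $B_p$ that intertwines $\Phi_0$; combining with the multiplication-by-$e^{-isx}$ trick as in \eqref{phitcont} produces maps $\widehat{E}_s$ on $B_p$ with $\widehat{E}_s\circ\rho=\rho\circ(\iota\circ E_s)$ (or the appropriately twisted version). Then if $\rho(X)=0$ we get $\rho(E_s(X))=0$ for all $s$; since $\rho$ restricted to $\iota(AP(\bR))$ is just the faithful multiplication representation of $AP(\bR)$, this forces $E_s(X)=0$ for every $s\in\bR$, whence $X=0$ by Corollary \ref{uniqfoco}. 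Thus $\rho$ is an isometric isomorphism onto $B_p$.

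The main obstacle I anticipate is bookkeeping the twist: because $\rho(\delta_s\otimes f)=\pi(f)D_s$ but also because the generator $(\delta_s\otimes 1)$ is sent to $D_s$ while $(\delta_0\otimes e^{i\lambda x})$ is sent to $M_\lambda$, one must be careful that the "Fourier coefficient at $s$" on the $B_p$ side is correctly extracted by compressing $D_{-s}\rho(X)$ (or $\rho(X)D_{-s}$) and then applying the $M_t$-averaging — getting the order of operations and the sign conventions right so that $\widehat{E}_s\circ\rho=\rho\circ E_s\circ(\,\cdot\,(\delta_{-s}\otimes1))$ holds on trigonometric polynomials, after which density finishes it. A secondary point worth checking explicitly is that $\pi$ really is isometric on $AP(\bR)$, i.e. that the multiplication representation of $AP(\bR)$ on $L^2(\bR)$ is faithful; this is standard (an almost periodic function is determined by its values, and its sup-norm equals its multiplication-operator norm), but it is the fact that anchors the whole injectivity argument. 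Everything else is routine approximation on generalized trigonometric polynomials of the form $\sum (\delta_s\otimes f_s)$ together with the uniform boundedness of the averaging operators $\Phi_T$.
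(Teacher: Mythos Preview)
Your setup of the covariant representation $(\pi,D)$ and the surjectivity of $\pi\rtimes D$ onto $B_p$ is exactly what the paper does. The divergence is in the injectivity step. The paper does not construct an intertwining expectation on $B_p$; instead it invokes the immediately preceding proposition, which shows that $AP(\bR)\times_\tau\bR_d$ is a \emph{simple} C$^\ast$-algebra. Since $\ker(\pi\rtimes D)$ is a closed two-sided ideal and the map is evidently nonzero, simplicity forces the kernel to be trivial, and injectivity is obtained in one line.

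Your route is also correct: the gauge automorphisms $\phi_{e^{it}}$ are indeed implemented on the image by $\operatorname{Ad}(M_t)$ (this is the computation $(\delta_0\otimes e^{itx})X(\delta_0\otimes e^{-itx})=\phi_{e^{it}}(X)$ from the simplicity proof, pushed through $\rho$), so the Bochner--Fej\'er averaging transfers to $B_p$ and intertwines with the $E_s$, and Corollary~\ref{uniqfoco} finishes it exactly as in Proposition~\ref{redeqfull}. The trade-off is that the paper's argument is shorter and cleaner in context (simplicity has just been established, and the claim about an almost periodic function being bounded away from zero after suitable translations has already been paid for), whereas your argument is self-contained and does not rely on simplicity at all---it would work equally well for a non-simple crossed product, and it reuses machinery (the $\Phi_t$ contractions and uniqueness of Fourier coefficients) that the paper has already developed in Section~2. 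Your anticipated ``bookkeeping'' obstacle is real but minor: once you note that $M_t$ commutes with $\pi(f)$ and $M_t D_s M_t^\ast=e^{its}D_s$, the intertwining on trigonometric polynomials is immediate.
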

\begin{proof}
Define the covariant representation $(\pi, D)$, where:
\begin{align*}
\pi: AP(\mathbb{R})\rightarrow B(L^2(\mathbb{R})): e^{i\lambda x}\mapsto M_\lambda
\end{align*}
and
\begin{align*}
D: \mathbb{R}_d\rightarrow B(L^2(\mathbb{R})): \mu\mapsto D_\mu.
\end{align*}
It is trivial to see that $\pi$ is a representation of $AP(\mathbb{R})$ and $D$ is a unitary representation, so it suffices to prove the covariance relation. Compute
\begin{align*}
D_\mu\pi(e^{i\lambda x})D_\mu^\ast=&D_\mu M_\lambda D_\mu^\ast
\end{align*}
and
\begin{align*}
\pi(\tau_\mu(e^{i\lambda x}))=&\pi(e^{i\lambda (x-\mu)})=e^{-i\lambda\mu}\pi(e^{i\lambda x})=e^{-i\lambda\mu}M_\lambda,
\end{align*}
hence the covariant relation holds by the Weyl relations.
By the universal property of the crossed product, this yields a representation between two C$^\ast$-algebras
\begin{align*}
(\pi\rtimes D): AP(\mathbb{R})\times_\tau \mathbb{R}_d\rightarrow C^\ast(\pi(AP(\mathbb{R})),D(\mathbb{R}_d))\,:\, (\pi\rtimes D) 
\sum_{\substack{s\in F\\F\subset\subset\mathbb{R}}}(\delta_s\otimes f_s)\mapsto 
\sum\limits_{\substack{s\in F\\F\subset\subset\mathbb{R}}}\pi(f_s)D_s.
\end{align*}
Observe that C$^\ast(\pi(AP(\mathbb{R})),D(\mathbb{R}_d))=B_p$.  Since 
$AP(\mathbb{R})\times_\tau \mathbb{R}_d$ is a simple algebra and $ker(\pi\rtimes D)$ is a two sided ideal, $(\pi\rtimes D)$ is injective, which yields that it is an isometric $\ast$-isomorphism.
\end{proof}

\begin{rem}
By the general theory of crossed products, the mapping
\begin{align*}
E_n: B_p\rightarrow B_p: \sum\limits_{\substack{s\in F\\F\subset\subset\mathbb{R}}} \pi(f_s)D_{\mu_s}\mapsto M_{f_n}
\end{align*}
is contractive. Moreover, we have a similar expectation for the $D_\mu$ operators. By the Weyl relations, we have the covariant relation $(\rho,M)$
\begin{align*}
\rho: AP(\mathbb{R})\rightarrow B(L^2(\mathbb{R})): e^{i\lambda x}\mapsto D_\lambda
\end{align*}
and
\begin{align*}
M: \mathbb{R}_d\rightarrow B(L^2(\mathbb{R})): \mu\mapsto M_{-\mu}.
\end{align*}
Hence, we have the isomorphism 
\begin{align*}
(\rho\rtimes M): AP(\mathbb{R})\times_\tau \mathbb{R}_d\rightarrow B_p\,:\, (\rho\rtimes M) 
\sum_{\substack{s\in F\\F\subset\subset\mathbb{R}}}(\delta_s\otimes f_s)\mapsto 
\sum\limits_{\substack{s\in F\\F\subset\subset\mathbb{R}}}\rho(f_s)M_{-s}.
\end{align*}
Therefore, we have the contractions
\begin{align*}
Z_m: B_p\rightarrow B_p: \sum\limits_{\substack{s\in F\\F\subset\subset\mathbb{R}}} \rho(f_s)M_{-\lambda_s}\mapsto D_{f_m}.
\end{align*}
Applying the natural isometric isomorphisms $M_f\mapsto f$ and $D_g\mapsto g$, we can identify the range of the maps $E_n$ and $Z_m$ with $AP(\mathbb{R})$.

One may check that $ (\rho\rtimes M)\circ(\pi\rtimes D)^{-1}\in \operatorname{Aut}(B_p)$, that sends $D_s$ to $M_{-s}$ and $M_\lambda$ to $D_\lambda$. Since $B_p$ is a concrete operator algebra on $L^2(\mathbb{R})$, by the Stone-von Neumann theorem   $(\rho\rtimes M)\circ(\pi\rtimes D)^{-1}=\operatorname{Ad}(F)$, 
where $\operatorname{Ad}(F)$ is the automorphism that is unitarily implemented by the Fourier-Plancherel transform \cite{mac}.
\end{rem}

The closed subalgebra of $B_p$ generated by $\{M_\lambda, D_\mu \,:\, \lambda,\mu\geq 0\}$ is called the \textbf{(norm closed) parabolic algebra} and it is denoted by $A_p$. Evidently,
\begin{align*}
(\pi\rtimes D)^{-1}(A_p)= AAP(\mathbb{R})\times_\tau \mathbb{R}_d^+,
\end{align*}
where $AAP(\mathbb{R})$ is the norm closed algebra of analytic almost periodic functions. Applying the contractions $E_n,\, Z_m$ we obtain by the standard Fejer-Bochner argument that 
\begin{align*}
AAP(\mathbb{R})\times_\tau \mathbb{R}_d^+=\{X\in AP(\mathbb{R})\times_\tau \mathbb{R}_d\,:\, E_n(X)=Z_m(X)=0, \text{ for all }n,m<0\}. 
\end{align*}
From now on, we identify $A_p$ with the semicrossed product $AAP(\mathbb{R})\times_\tau \mathbb{R}_d^+$. The first question to wonder for the norm closed algebra is once again the integral domain question, as in the weak$^\ast$-closed case. The question still seems hard to solve, because of the absence of a first nonzero coefficient. However we can prove that $A_p$ contains no non-trivial idempotents. The following lemma is the key.
\begin{prop}
The spectrum of every element $X$ in  $A_p$ is connected.
\end{prop}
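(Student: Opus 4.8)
The plan is to exploit the contractive expectation $E_0$ (equivalently, the character-theoretic structure coming from the maps $\phi_\lambda$) to show that the spectrum of $X\in A_p$ cannot be disconnected. Suppose, for contradiction, that $\sigma(X)=K_0\sqcup K_1$ is a partition into two disjoint nonempty compact sets. By the holomorphic functional calculus applied to $X$ inside the unital Banach algebra $A_p$ (which is closed in $B_p$, hence a Banach algebra, and note it is unital since $M_0=D_0=I$ lie in it), there is an idempotent $P\in A_p$ with $\sigma(P)\subseteq\{0,1\}$ obtained from the Riesz projection associated with the clopen piece $K_1$ of $\sigma(X)$; the claim ``$\sigma(X)$ is connected'' is therefore equivalent to ``$A_p$ has no nontrivial idempotents,'' and this is precisely how the statement will be used in the sequel. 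So it suffices to prove that $A_p$ has no idempotent $P$ other than $0$ and $1$.

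First I would apply the action $\{\phi_{e^{it}}\}$ and its average $\Phi_0$. Working in the crossed product picture $A_p=AAP(\mathbb{R})\times_\tau\mathbb{R}_d^+$, recall from Proposition~\ref{expeker} (and its analogue transported through the isomorphism $(\pi\rtimes D)$) that $\Phi_0(X)=\iota(E_0(X))$ and that $E_0$ restricted to the semicrossed product is a \emph{contractive homomorphism} onto $AAP(\mathbb{R})$ by Corollary~\ref{contrexpmult}. Hence if $P=P^2\in A_p$ then $E_0(P)=E_0(P)^2$ is an idempotent in $AAP(\mathbb{R})$; but $AAP(\mathbb{R})$ is a subalgebra of the function algebra $C_b(\mathbb{R})$ with connected maximal ideal space coming from the Bohr compactification (indeed $AAP(\mathbb{R})\subseteq AP(\mathbb{R})\cong C(\mathbb{R}_B)$ and $\mathbb{R}_B$ is connected), so its only idempotents are $0$ and $1$. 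Composing further with the character $x_\infty$ of Proposition~\ref{charxinfty} one even gets that $x_\infty(P)\in\{0,1\}$, which will pin down which case we are in.

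The heart of the argument is then to upgrade ``$E_0(P)\in\{0,1\}$'' to ``$P\in\{0,1\}$,'' and this is where I expect the main obstacle to lie, precisely because, as the authors emphasize, there is no ``first nonzero Fourier coefficient.'' Suppose $E_0(P)=0$ (the case $E_0(P)=1$ is handled by replacing $P$ with $1-P$). Then $P$ lies in the kernel of the contractive homomorphism $E_0$, and I would argue that $E_t(P)=E_0(P(\delta_{-t}\otimes 1))$ must vanish for all $t\ge 0$ as follows: from $P=P^2$ and the fact that all Fourier coefficients of $P$ are supported on $[0,\infty)$ (Proposition~\ref{charwithfoucoe}), comparing $t$-th Fourier coefficients on both sides of $P=P\cdot P$ and using $E_0(P)=0$ gives a recursion showing that the ``lowest'' occurring exponent, if any, would have to satisfy $E_s(P)=\sum_{u+v=s,\,u,v>0}(\dots)=0$ once one knows all strictly smaller ones vanish — the subtlety being to make this rigorous without a minimal exponent. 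The device for that is the Bochner–Fejér machinery: by Theorem~\ref{Cesaro covergence}, $P$ is the norm limit of its Bochner–Fejér polynomials $\sigma_{(\beta_1,\dots,\beta_m)}(P)$, and on these genuine polynomials an induction on the (finite) exponent support is legitimate; one shows each such polynomial, and hence the limit $P$, satisfies $E_s(P)=0$ for all $s$, whence $P=0$ by Corollary~\ref{uniqfoco}.

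Assembling: from the idempotent $P$ we produce, via holomorphic functional calculus, the Riesz projection for any nontrivial clopen subset of $\sigma(X)$; the previous paragraphs force that projection to be $0$ or $1$; therefore $\sigma(X)$ admits no nontrivial clopen subset, i.e. $\sigma(X)$ is connected. The only genuinely delicate point is the ``no first coefficient'' issue in the induction, and I would isolate it as a short lemma: \emph{if $P\in AAP(\mathbb{R})\times_\tau\mathbb{R}_d^+$ satisfies $P^2=P$ and $E_0(P)=0$, then $P=0$}, proved by the Bochner–Fejér reduction above together with the multiplicativity of $E_0$ on the semicrossed product.
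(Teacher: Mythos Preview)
Your reduction to ``$A_p$ has no nontrivial idempotents'' is valid, and the first step---using that $E_0$ is a homomorphism on the semicrossed product to force $E_0(P)\in\{0,1\}$---is correct. The gap is in the passage from $E_0(P)=0$ to $P=0$. The Bochner--Fej\'er device does not carry the relation $P^2=P$: the approximants $Q_m=\sigma_{(\beta_1,\dots,\beta_m)}(P)$ are \emph{not} idempotents, so there is nothing to induct on. Concretely, $E_t(Q_m)=c_t\,E_t(P)$ for certain Fej\'er weights $c_t\in[0,1]$, whereas $E_t(Q_m^2)=\sum_{u+v=t}c_u c_v\,E_u(P)\,\tau_u(E_v(P))$, and these agree only if $c_u c_v=c_t$ along every decomposition, which fails. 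If instead you try the recursion directly on $P$, you can indeed show that whenever the Fourier support $\Gamma\subset(0,\infty)$ of $P$ has $\inf\Gamma=s_0>0$ one gets $E_t(P)=E_t(P^2)=0$ for $t<2s_0$, hence $\inf\Gamma\geq 2s_0$, a contradiction; but nothing you have written rules out $\inf\Gamma=0$ with $0\notin\Gamma$, i.e.\ Fourier support accumulating at $0$. That is precisely the ``no first nonzero coefficient'' obstruction the paper flags, and your proposal does not overcome it.

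The paper's proof avoids this entirely by reversing the order of operations. Instead of applying the Riesz functional calculus to $X$ and then trying to analyse an arbitrary idempotent of $A_p$, it first perturbs $X$ to a nearby \emph{trigonometric polynomial} $X_0$ whose spectrum is still disconnected (a standard Banach-algebra perturbation fact), and only then works inside the singly generated commutative subalgebra $A(X_0)$. The payoff is that every element of $A(X_0)$ has Fourier support contained in the additive semigroup generated by the \emph{finite} support of $X_0$; that semigroup meets each interval $[0,M]$ in finitely many points, so every nonzero element of $A(X_0)$ genuinely has a first nonzero Fourier coefficient. This makes $A(X_0)$ an integral domain, which is then contradicted by the Riesz idempotents $f(X_0),g(X_0)\in A(X_0)$ with $f(X_0)g(X_0)=0$. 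In short, the paper manufactures a finitely generated exponent set \emph{before} invoking functional calculus, so that the first-coefficient argument becomes available; your proposal invokes the functional calculus first and is then left without one.
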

\begin{proof}
Let $X\in AAP(\mathbb{R})\times_\tau \mathbb{R}_d^+$ with spectrum $Sp(X)=U\cup V$, where $U,V$ are non-empty disjoint compact subsets of $\mathbb{C}$. By the density of generalized trigonometric polynomials in $A_p$, there exists an element $X_0=\sum\limits_{\substack{s\in F\\F\subset\subset\mathbb{R}^+}} M_{g_s} D_s$, such that $Sp(X_0)$ is not connected (for this standard Banach algebra fact see for example Theorem 1.1 in \cite{her}).
Abusing the notation, we write again that $Sp(X_0)=U\cup V$, for some non-empty disjoint compact sets $U$ and $V$.
 
\noindent\underline{Claim:} The norm closed commutative algebra generated by a trigonometric polynomial $Z_0$, denoted by $A(Z_0)$, is an integral domain.\\
 Let $M>0$ and let $F_n$ be the finite set of positive indices of the nonzero Fourier coefficients of $Z_0^n$ (so $F_1=F\backslash\{0\}$). Since $Z_0$ has only a finite set of nonzero Fourier coefficients, there exists $N>0$, such that for every $n>N$ we have
\begin{align*}
F_n\cap [0,M]=\emptyset.
\end{align*}
Define $F_0=\cup_{n=1}^N F_n\cup \{0\}$. Then for every $t\in [0,M]\backslash F_0$ and $Y= \sum\limits_{n=0}^ N c_n Z_0^n$ generalized polynomial we have $E_t(Y)=0$. Since the subspace of generalized polynomials is dense in $A(Z_0)$ we obtain by continuity of the maps $E_t$ that
\begin{align*}
E_t(Y)=0, \text{ for all }Y\in A(Z_0).
\end{align*}
If $Y$ is a nonzero element in $A(Z_0)$, then it has some nonzero Fourier coefficient, say $E_{t_0}(Y)$. Hence the set of indices of nonzero Fourier coefficients of $Y$ in $[0,t_0]$ is finite and nonempty , so it follows that $Y$ has a first nonzero Fourier coefficient.

Let now $Y_1, Y_2$ be two nonzero elements of $A(Z_0)$ and let $m_1$ and $m_2$ be the indices of their respective first nonzero Fourier coefficients. Then $m_1+m_2$ is the first nonzero Fourier coefficient of the product $Y_1Y_2$, since
\begin{align*}
E_{m_1+m_2}(Y_1Y_2)=E_{m_1}(Y_1) \tau_{m_1} (E_{m_2}(Y_2)) 
\end{align*}
and $E_{m_1}(Y_1), \tau_{m_1} (E_{m_2}(Y_2))$ are two nonzero elements of the integral domain $AAP(\mathbb{R})$. Thus, we proved our claim.

 On the other hand, since $Sp(X_0)\subseteq U\cup V$, there are holomorphic functions $f,g$ defined on $U\cup V$, given by $f\big|_U=g\big|_V=1$ and $f\big|_V=g\big|_U=0$. Therefore it follows by Runge's theorem \cite{con} and the holomorphic functional calculus \cite{rad-ros} that $f(X_0),g(X_0)\in A(X_0)$ and 
\begin{align*}
 f(X_0)g(X_0)=0, 
\end{align*}
which contradicts the fact that $A(X_0)$ is an integral domain.
\end{proof}   
\begin{cor}
$A_p$ contains no non-trivial idempotents.
\end{cor}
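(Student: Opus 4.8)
The plan is to derive this immediately from the preceding proposition, which asserts that the spectrum $Sp(X)$ of every $X \in A_p$ is connected. Recall the standard fact from the holomorphic functional calculus: if $e \in A_p$ is an idempotent, then $e = e^2$, so the only possible eigenvalues of $e$ are $0$ and $1$; more precisely, applying the holomorphic functional calculus to the polynomial $z \mapsto z^2 - z$ gives $(e^2 - e) = 0$, which forces $Sp(e) \subseteq \{0,1\}$ since $z^2 - z$ vanishes on $Sp(e)$ by the spectral mapping theorem.

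So the argument is: let $e \in A_p$ be an idempotent. Then $Sp(e) \subseteq \{0,1\}$, a set with at most two points. By the proposition, $Sp(e)$ is connected, hence it is either $\{0\}$, $\{1\}$, or empty. Since $A_p$ is unital (it is a closed subalgebra of $B_p$ containing the identity — indeed $M_0 = D_0 = I$), the spectrum of any element is nonempty, so $Sp(e) = \{0\}$ or $Sp(e) = \{1\}$. In the first case $e$ is quasinilpotent and idempotent, hence $e = e^n \to 0$ in norm so $e = 0$; alternatively, an idempotent with spectrum $\{0\}$ satisfies $\|e\| = \lim \|e^n\|^{1/n} \cdot (\text{nothing})$ — cleaner: since $1 - e$ is then invertible (as $1 \notin Sp(e)$) and $(1-e)e = e - e^2 = 0$, multiplying by $(1-e)^{-1}$ gives $e = 0$. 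In the second case $1 - e$ is an idempotent with spectrum $\{0\}$, so by the same reasoning $1 - e = 0$, i.e.\ $e = 1$. Thus $A_p$ has no idempotents other than $0$ and $1$.

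There is essentially no obstacle here: the real content was already packed into the preceding proposition about connectedness of spectra (whose proof hinges on the claim that the singly-generated closed subalgebra $A(Z_0)$ of a trigonometric polynomial is an integral domain because it possesses a first nonzero Fourier coefficient). The only small points to be careful about are that $A_p$ is unital so that spectra are nonempty, and the elementary Banach-algebra lemma that an idempotent whose spectrum omits $1$ must be zero. One could phrase the whole thing in one line: a connected subset of $\{0,1\}$ is a single point, and an idempotent is determined up to $\{0,1\}$ by that point. I would write the corollary's proof as two or three sentences along exactly these lines.

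\begin{proof}
Suppose $e\in A_p$ is an idempotent, $e^2=e$. Applying the spectral mapping theorem to the polynomial $z\mapsto z^2-z$, which annihilates $e$, we get $Sp(e)\subseteq\{0,1\}$. By the previous proposition $Sp(e)$ is connected, and since $A_p$ is unital its spectrum is nonempty; hence $Sp(e)=\{0\}$ or $Sp(e)=\{1\}$. If $Sp(e)=\{0\}$ then $1\notin Sp(e)$, so $1-e$ is invertible; from $(1-e)e=e-e^2=0$ we conclude $e=0$. If $Sp(e)=\{1\}$ then $1-e$ is an idempotent with $Sp(1-e)=\{0\}$, so by the same argument $1-e=0$, i.e.\ $e=1$.
\end{proof}
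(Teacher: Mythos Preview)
Your proof is correct and is exactly the standard argument the paper has in mind: the corollary is stated without proof immediately after the proposition that every element of $A_p$ has connected spectrum, so the intended reasoning is precisely the one you give (idempotent $\Rightarrow Sp(e)\subseteq\{0,1\}$, connected and nonempty $\Rightarrow$ singleton, hence $e\in\{0,1\}$).
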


\subsection{Isometric Automorphisms of $A_p$}
In this section, our goal is to determine the isometric automorphisms of the norm closed parabolic algebra. Interestingly there is a richer diversity than in the weak$^\ast$-span context. The automorphisms are strongly related to the characters of the discrete real line and the Arens - Singer theory for analytic almost periodic functions \cite{bks}. 

Recall that given a unitary map $U\in B(L^2(\mathbb{R}))$, we can define the automorphism
\begin{align*}
\operatorname{Ad}(U):B(L^2(\mathbb{R}))\rightarrow B(L^2(\mathbb{R})): T\mapsto UTU^\ast.
\end{align*}
For convenience, if $\operatorname{Ad}(U)$ keeps a subspace of $B(L^2(\mathbb{R}))$ invariant, we denote its restriction to the subspace by the same notation. The main theorem of this section is the following. 
\begin{thm}\label{apisom}
Let $\Phi$ be an isometric automorphism  of $A_p$. Then $\Phi$ has the form
\begin{equation}\label{automapal}
\Phi(M_\lambda D_\mu)= c(\mu) d(\lambda) \operatorname{Ad}(V_t) (M_\lambda D_\mu),\, \lambda,\mu\in \mathbb{R}^+
\end{equation}
where $t\in\mathbb{R}$ and $c,d$ are characters of the discrete group of the real numbers. Moreover, the formula \eqref{automapal} gives always a well-defined isometric automorphism of $A_p$.
\end{thm}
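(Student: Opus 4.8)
The plan is to break the theorem into two directions: first, that every isometric automorphism $\Phi$ of $A_p$ has the stated form; second, that every map of that form is a well-defined isometric automorphism. For the forward direction I would work through the semicrossed product picture $A_p = AAP(\mathbb{R})\times_\tau\mathbb{R}_d^+$ and exploit the contractive expectations $E_n, Z_m$ available from the two covariant representations $(\pi,D)$ and $(\rho,M)$ constructed in the preliminaries. The key structural fact is that $\Phi$ must interact compatibly with the maximal ideal space and with these expectations. Concretely, the character $x_\infty$ on $AAP(\mathbb{R})$ (Proposition~\ref{charxinfty}), composed with the expectation $E_0$ onto $AAP(\mathbb{R})$ and then with $\Phi$, gives a character of $A_p$; tracking how $\Phi$ permutes such characters should force $\Phi$ to send $M_\lambda$ to a scalar multiple of $M_\lambda$ up to a dilation, and likewise for the $D_\mu$ (using the second expectation $Z_m$ which treats the $D_\mu$'s as the "coefficient" variable). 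The dilation parameter $t$ enters because $\operatorname{Ad}(V_t)$ is the unique way, by the Stone--von Neumann-type rigidity already invoked in the Remark after the isomorphism theorem, to rescale $M_\lambda\mapsto M_{e^t\lambda}$, $D_\mu\mapsto D_{e^{-t}\mu}$ while preserving the Weyl relations; the scalars $c(\mu), d(\lambda)$ are then forced to be characters of $\mathbb{R}_d$ by multiplicativity of $\Phi$ on the dense subalgebra of trigonometric polynomials.

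\textbf{Detailed steps, forward direction.} First I would show that $\Phi$ carries the diagonal subalgebra generated by $\{M_\lambda\}$ (the copy of $AAP(\mathbb{R})$) into itself, or at least into the algebra generated by $\{M_\lambda\}$ after composing with a dilation: this uses that $\Phi$ preserves the spectrum-connectedness structure and that $AAP(\mathbb{R})$ is characterised as $\{f : \epsilon_\lambda(f)=0, \lambda<0\}$ together with the fact that the $M_\lambda$'s are distinguished as the image of the multiplication covariant representation. Next, apply Arens's theorem (cited before Proposition~\ref{autapisomisom}): the induced automorphism of $AAP(\mathbb{R})$ is of the form $e^{i\lambda x}\mapsto d(\lambda)e^{ik\lambda x}$ for some $k>0$ and $d\in\mathbb{R}_B$. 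Run the same argument with the roles of $M$ and $D$ interchanged, using the isomorphism $(\rho\rtimes M)$ and the expectations $Z_m$, to get $D_\mu\mapsto c(\mu) D_{k'\mu}$ for some $k'>0$, $c\in\mathbb{R}_B$. Then the Weyl relation $M_\lambda D_\mu = e^{i\lambda\mu}D_\mu M_\lambda$, applied to $\Phi(M_\lambda D_\mu)$, forces $kk'=1$, so writing $k=e^t$ we get $\Phi(M_\lambda D_\mu)=c(\mu)d(\lambda)M_{e^t\lambda}D_{e^{-t}\mu} = c(\mu)d(\lambda)\operatorname{Ad}(V_t)(M_\lambda D_\mu)$, using the dilation relations $V_tM_\lambda V_t^\ast = M_{e^t\lambda}$ and $V_tD_\mu V_t^\ast = D_{e^{-t}\mu}$ from the introduction.

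\textbf{Converse direction.} For the "moreover" statement I would verify directly that the formula \eqref{automapal} defines an algebra homomorphism on the dense subalgebra of generalized trigonometric polynomials $\sum M_{g_s}D_s$, which is a routine check using that $c,d$ are multiplicative characters and that $\operatorname{Ad}(V_t)$ is an automorphism respecting the semigroup structure (note $t\in\mathbb{R}$ but the semigroup parameters $\lambda,\mu\geq 0$ are preserved since $e^t,e^{-t}>0$). Boundedness with norm $\leq 1$ follows because $\operatorname{Ad}(V_t)$ is isometric on $B(L^2(\mathbb{R}))$ and the characters $c(\mu),d(\lambda)$ have modulus one: more precisely, one realises $\Phi$ as the restriction of a $\ast$-automorphism of the enveloping $C^\ast$-algebra $B_p$, built from a covariant representation analogous to \eqref{eqautapbab} but now with the translation/dilation structure, hence automatically isometric; invertibility is immediate since $\Phi^{-1}$ has the same form with $t,c,d$ replaced by $-t$, $\bar c\circ(\text{rescale})$, $\bar d\circ(\text{rescale})$.

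\textbf{Main obstacle.} The hard part will be the forward direction's first step: showing $\Phi$ respects the internal grading enough to reduce to Arens's theorem, since $A_p$ has \emph{no first nonzero Fourier coefficient} (as the authors emphasise) and hence no obvious "analytic Toeplitz" rigidity to lean on. I expect this requires a careful argument identifying $AAP(\mathbb{R})$ inside $A_p$ intrinsically—perhaps as the closed algebra generated by the range of $E_0$, or via the character $x_\infty$ and the structure of one-dimensional representations of $A_p$—and then showing $\Phi$ must preserve this intrinsic object up to the dilation ambiguity. The interplay between the two expectations $E_n$ (coefficient in the $D$-direction) and $Z_m$ (coefficient in the $M$-direction), which are genuinely different because the algebra is noncommutative, is where the delicacy lies, and is presumably why the weak$^\ast$ case had fewer automorphisms.
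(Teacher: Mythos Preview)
Your overall architecture---reduce to Arens's theorem on each of the two copies of $AAP(\mathbb{R})$, use the Weyl relation to force $kk'=1$, and build the converse from a covariant representation of $(AP(\mathbb{R}),\mathbb{R}_d,\tau)$---matches the paper, and your converse direction is essentially what the paper does. But you have correctly flagged the real gap and not closed it. The paper's mechanism for your ``main obstacle'' is quite specific and differs from the sketches you offer.

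The paper does \emph{not} try to characterise the subalgebra $AAP_1=\overline{\operatorname{span}}\{M_\lambda\}$ intrinsically inside $A_p$. Instead it passes to the quotient by the commutator ideal $\mathfrak{C}_p$, which it identifies concretely as $\ker E_0\cap\ker Z_0$; the quotient is then $\{M_f+D_g+\mathfrak{C}_p\}$, a commutative Banach algebra whose character space is computed to be two copies $\Delta_1,\Delta_2$ of $\mathfrak{M}(AAP(\mathbb{R}))$ glued at the single point $\chi_\infty$. A purely topological argument (removing $\chi_\infty$ disconnects the space, and no other point does) forces the induced homeomorphism $\gamma_0$ to fix $\chi_\infty$ and then either fix or swap $\Delta_1,\Delta_2$. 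The swap case is ruled out by the Weyl relation (it would force $k_1k_2=-1$), and in the non-swap case Arens gives $\tilde\Phi(M_\lambda+\mathfrak{C}_p)=d(\lambda)M_{k_2\lambda}+\mathfrak{C}_p$. Your suggestions about spectrum-connectedness or ``image of the covariant representation'' do not yield this dichotomy; you need the commutator-quotient character space.

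The second, sharper, point you are missing is how to pass from $\Phi(M_\lambda)\equiv d(\lambda)M_{k_2\lambda}\pmod{\mathfrak{C}_p}$ to $\Phi(M_\lambda)=d(\lambda)M_{k_2\lambda}$ on the nose. This is the \emph{only} place the isometric hypothesis is used, and the paper isolates it as a separate lemma: writing $\Phi(M_\lambda)=d(\lambda)M_{k_2\lambda}+A$ with $A\in\mathfrak{C}_p$, one applies the reduced-crossed-product column estimate (Proposition~\ref{estimforredcros}) to get
\[
1=\|\Phi(M_\lambda)\|^2\ \ge\ \|d(\lambda)e^{ik_2\lambda x}\xi\|^2+\sum_{s\in F}\|\tau_{-s}(E_s(A))\xi\|^2\ =\ 1+\sum_{s\in F}\|\tau_{-s}(E_s(A))\xi\|^2,
\]
forcing every Fourier coefficient of $A$ to vanish. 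None of your proposed intrinsic characterisations of $AAP_1$ supply this step, and without it you cannot conclude that $\Phi(M_\lambda)$ has no component along the $D_\mu$ directions.
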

Note that in the special case where the characters $c,d$ are continuous in the standard norm of the reals, then their respective automorphisms are unitarily implemented by  $M_\lambda$ and $D_\mu$, for some $\lambda,\mu\in\mathbb{R}$. 
The idea of the proof is to work with the induced homeomorphism of the maximal ideal space of the commutative algebra $A_p/\mathfrak{C}_p$, where $\mathfrak{C}_p$ is the commutator ideal of $A_p$. Similar arguments for the case of crossed products by $\mathbb{Z}^+$ can be found in \cite{pow1, was}. The first step is to identify the commutator ideal $\mathfrak{C}_p$. Define the contractions $E_n, Z_m$ as
in the previous section and the character $x_\infty$ of $AAP(\mathbb{R})$, as it was defined in \ref{charxinfty}. 
\begin{lemma}\label{comideapbaby}
The  commutator ideal 
 $\mathfrak{C}_p$ is equal to the set 
\begin{align*}
\{\alpha\in A_p : E_0(\alpha)=0,Z_0(\alpha)=0\}.
\end{align*}
\end{lemma}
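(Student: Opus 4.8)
The strategy is to establish both inclusions between $\mathfrak{C}_p$ and the set $\mathcal{J}_p := \{\alpha\in A_p : E_0(\alpha)=0,\ Z_0(\alpha)=0\}$. First I would check that $\mathcal{J}_p$ is a closed two-sided ideal of $A_p$: it is closed because it is the intersection of the kernels of the two continuous maps $E_0$ and $Z_0$, and since by Corollary~\ref{contrexpmult} (applied to both the $D_\mu$-grading and the $M_\lambda$-grading, as in the Remark following the last theorem of the previous subsection) the compressions of $E_0$ and $Z_0$ to $A_p$ are \emph{multiplicative}, the set $\mathcal{J}_p$ is exactly $\ker(E_0)\cap\ker(Z_0)$ for two algebra homomorphisms onto $AAP(\mathbb{R})$, hence a two-sided ideal. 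For the inclusion $\mathfrak{C}_p\subseteq\mathcal{J}_p$, it suffices to show that $A_p/\mathcal{J}_p$ is commutative; equivalently that $E_0$ and $Z_0$ together separate points enough that their "product" homomorphism $\alpha\mapsto(E_0(\alpha),Z_0(\alpha))$ lands in a commutative algebra and kills all commutators — indeed $E_0([X,Y])=E_0(X)E_0(Y)-E_0(Y)E_0(X)=0$ since $AAP(\mathbb{R})$ is commutative, and likewise $Z_0([X,Y])=0$, so every commutator lies in $\mathcal{J}_p$, and $\mathcal{J}_p$ being closed it contains $\mathfrak{C}_p$.

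The reverse inclusion $\mathcal{J}_p\subseteq\mathfrak{C}_p$ is the substantive direction. Here I would argue on generalized trigonometric polynomials $X=\sum_{s\in F}M_{g_s}D_s$ (with $F\subset\subset\mathbb{R}^+$) and pass to the closure. Writing $g_s=x_\infty(g_s)\cdot 1 + (g_s - x_\infty(g_s))$ and using that $AAP(\mathbb{R})$ is generated by $\{e^{i\lambda x}:\lambda\ge 0\}$ with $x_\infty$ picking out the constant term, one decomposes each coefficient so that the condition $E_0(X)=0$ says $g_0\in\ker x_\infty$ (the analytic functions with no constant term) and the condition $Z_0(X)=0$ controls the "diagonal in the other grading". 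The idea is then to exhibit every monomial $M_\lambda D_\mu$ with $\lambda>0$ or $\mu>0$ as lying in $\mathfrak{C}_p$ modulo lower-order terms: for instance, a commutator such as $[M_\lambda, D_\mu]= M_\lambda D_\mu - D_\mu M_\lambda = (1-e^{-i\lambda\mu})M_\lambda D_\mu$ (by the Weyl relations $M_\lambda D_\mu = e^{i\lambda\mu}D_\mu M_\lambda$) shows $M_\lambda D_\mu\in\mathfrak{C}_p$ whenever $\lambda\mu\neq 0$ modulo $2\pi$; one then handles the remaining monomials — those with $\mu=0$ and $\lambda>0$, or $\lambda=0$ and $\mu>0$, or $\lambda\mu\in 2\pi\mathbb{Z}$ — by taking commutators with auxiliary operators $M_\nu$ or $D_\nu$ for a well-chosen $\nu$ (using that for any $\lambda>0$ there is $\nu$ with $\lambda\nu\notin 2\pi\mathbb{Z}$, and the commutator $[M_\nu\cdot(\text{something}),\,\cdot\,]$ produces the desired monomial). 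Finally one shows that $X\in\mathcal{J}_p$ is a norm limit of finite sums of such commutators: the Fejér–Bochner machinery of Theorem~\ref{Cesaro covergence} lets us approximate $X$ by its Bochner–Fejér polynomials $\sigma_{(\beta_1,\dots,\beta_m)}(X)$, each of which is a generalized trigonometric polynomial with the same vanishing conditions $E_0=Z_0=0$, hence a finite sum of commutators by the monomial analysis, and so lies in $\mathfrak{C}_p$; closedness of $\mathfrak{C}_p$ finishes the argument.

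\textbf{Main obstacle.} The delicate point is the bookkeeping in the reverse inclusion: a single commutator identity only produces one monomial times a nonzero scalar, but to write an arbitrary polynomial in $\mathcal{J}_p$ as a genuine finite sum of commutators one must simultaneously control \emph{both} gradings — the translation grading detected by the $E_n$ and the dual grading detected by the $Z_m$ — and ensure no "diagonal" term $M_0 D_0 = \mathrm{id}$ (or more generally the $x_\infty$-constant part) sneaks back in. The clean way to organize this is to observe that the quotient map $A_p\to A_p/\mathfrak{C}_p$ factors through the pair $(E_0,Z_0)$ composed with $x_\infty$ on each factor, identify $A_p/\mathfrak{C}_p$ with a suitable commutative $C^*$-algebra (a function algebra over a product of copies of the Bohr compactification), and then the equality $\mathfrak{C}_p=\mathcal{J}_p$ amounts to showing this factorization is exactly through $(E_0,Z_0)$ and not through anything coarser — which is where the Fejér–Bochner density and the nontriviality of the Weyl cocycle $e^{i\lambda\mu}$ do the work.
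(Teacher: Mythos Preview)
Your approach is essentially the paper's: both directions of the inclusion are handled the same way. For $\mathfrak{C}_p\subseteq\mathcal{J}_p$ the paper says ``evidently'' where you invoke the multiplicativity of $E_0,Z_0$ into the commutative algebra $AAP(\mathbb{R})$; for the reverse inclusion the paper uses exactly your Weyl-commutator identity (written as $e^{i\lambda x}=f_s-\tau_s(f_s)$ with $f_s=e^{i\lambda x}(1-e^{-i\lambda s})^{-1}$, which is your $[M_\lambda,D_\mu]=(1-e^{-i\lambda\mu})M_\lambda D_\mu$), then disposes of the exceptional case $\lambda\mu\in 2\pi\mathbb{Z}$ by the ideal property of $\mathfrak{C}_p$ rather than by auxiliary commutators, and finally notes that $\{M_\lambda D_\mu:\lambda,\mu>0\}$ generate $\mathcal{J}_p$ as a closed ideal --- which is precisely your Fej\'er--Bochner density argument, left implicit.

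There is, however, a real confusion in your list of ``remaining monomials'': you include the cases $\mu=0,\lambda>0$ and $\lambda=0,\mu>0$ as monomials to be placed in $\mathfrak{C}_p$. These are \emph{not} in $\mathcal{J}_p$ --- indeed $E_0(M_\lambda)=e^{i\lambda x}\neq 0$ and $Z_0(D_\mu)\neq 0$ --- so they are irrelevant to the reverse inclusion, and in fact they do not lie in $\mathfrak{C}_p$ at all (they survive in the commutative quotient). The conditions $E_0(X)=0$ and $Z_0(X)=0$ on a polynomial $X=\sum c_{\lambda,\mu}M_\lambda D_\mu$ force $c_{\lambda,0}=0$ for all $\lambda$ \emph{and} $c_{0,\mu}=0$ for all $\mu$; hence every surviving monomial has $\lambda>0$ \emph{and} $\mu>0$, not ``$\lambda>0$ or $\mu>0$'' as you wrote. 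Once this is corrected, only the generic case and the case $\lambda\mu\in 2\pi\mathbb{Z}\setminus\{0\}$ remain, both of which your plan handles, and the argument goes through.
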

\begin{proof}
If $\alpha=xy-yx\in\mathfrak{C}_p$, then evidently $E_0(\alpha)=Z_0(\alpha)=0$. On the other hand, for every $\lambda, s> 0$  with   $\lambda s$ not equal to  $2n\pi\, (n\in\mathbb{N})$, we have $e^{i\lambda x}=f_s-f_s\circ \tau_s$, where $f_s=e^{i\lambda x}(1-e^{-i\lambda s})^{-1}$. Hence $e^{i\lambda x}D_s\in\mathfrak{C}_p$,
for such $\lambda, s$. Since $\mathfrak{C}_p$ is an ideal it follows that $e^{i\lambda x}D_s\in\mathfrak{C}_p$  for every $\lambda, s >0$. Since these two sets have the same generators (as ideals), the proof is complete.
\end{proof}

\begin{lemma}
$A_p/\mathfrak{C}_p=\{M_f+D_g+\mathfrak{C}_p\,:\,f,g\in AAP(\mathbb{R})\} $.
\end{lemma}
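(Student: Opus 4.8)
The plan is to establish the nontrivial inclusion $\subseteq$ by producing, for each $X\in A_p$, an explicit pair $f,g\in AAP(\mathbb{R})$ with $X-M_f-D_g\in\mathfrak{C}_p$; the inclusion $\supseteq$ is immediate, since for $f,g\in AAP(\mathbb{R})$ the operators $M_f=\pi(f)$ and $D_g=\rho(g)$ both lie in $A_p$, the homomorphisms $\pi,\rho$ being continuous and carrying $AAP(\mathbb{R})$ into the generating semigroups of $A_p$.

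First I would extract the two ``diagonal'' pieces of $X$. Since $E_0$ sends each generalized trigonometric polynomial of $A_p=AAP(\mathbb{R})\times_\tau\mathbb{R}_d^+$ into the norm closed set $\{M_f:f\in AAP(\mathbb{R})\}$, continuity of $E_0$ gives $E_0(X)=M_h$ with $h\in AAP(\mathbb{R})$ for every $X\in A_p$; symmetrically $Z_0(X)=D_k$ with $k\in AAP(\mathbb{R})$. Next I would record the bookkeeping fact that $x_\infty(h)=x_\infty(k)$: on a generalized trigonometric polynomial $X=\sum_{\lambda,\mu\ge0}c_{\mu\lambda}M_\lambda D_\mu$ one has $h=\sum_\lambda c_{0\lambda}e^{i\lambda x}$ and $k=\sum_\mu c_{\mu0}e^{i\mu x}$, so $x_\infty(h)=c_{00}=x_\infty(k)$; since the bounded functionals $x_\infty\circ E_0$ and $x_\infty\circ Z_0$ on $A_p$ agree on the dense set of such polynomials, they agree everywhere. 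Write $c:=x_\infty(h)=x_\infty(k)$.

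Next I would set $f:=h-c$ and $g:=k$, both still in $AAP(\mathbb{R})$, and verify $X-M_f-D_g\in\mathfrak{C}_p$ via Lemma~\ref{comideapbaby}, i.e.\ by checking $E_0(X-M_f-D_g)=0$ and $Z_0(X-M_f-D_g)=0$. This is a short computation using $E_0(M_f)=M_f$, $E_0(D_g)=x_\infty(g)I=cI$, $Z_0(D_g)=D_g$, and $Z_0(M_f)=x_\infty(f)I=(x_\infty(h)-c)I=0$, which give $E_0(X-M_f-D_g)=M_h-M_{h-c}-cI=0$ and $Z_0(X-M_f-D_g)=D_k-0-D_k=0$. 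Hence $X+\mathfrak{C}_p=M_f+D_g+\mathfrak{C}_p$, finishing the proof.

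The one point I expect to require care is this last bookkeeping: one must notice that the scalar identity-coefficient is common to $E_0(X)$ and $Z_0(X)$, for otherwise the relations $f=h-x_\infty(g)$ and $g=k-x_\infty(f)$ forced by the two equations would be inconsistent, and then one must absorb that scalar on a single side (here the multiplication side). Alternatively, one can bypass the functionals: the Weyl relations show $M_\lambda D_\mu\in\mathfrak{C}_p$ whenever $\lambda,\mu>0$ (both $E_0$ and $Z_0$ annihilate such a product), so a generalized trigonometric polynomial $\sum c_{\mu\lambda}M_\lambda D_\mu$ is congruent mod $\mathfrak{C}_p$ to $M_h+D_k-c_{00}I=M_{h-c_{00}}+D_k$ with $h,k$ as above, and one passes to the limit using continuity of $E_0$ and $Z_0$. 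Everything else is the familiar ``verify on trigonometric polynomials, extend by density'' routine used throughout the preceding sections.
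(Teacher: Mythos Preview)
Your proof is correct and is essentially the paper's argument in more direct form: both extract the representative $E_0(X)+Z_0(X)-Z_0(E_0(X))$ via the contractive expectations and Lemma~\ref{comideapbaby}. The only difference is organizational---the paper phrases the nontrivial inclusion as ``the right-hand side is closed'' and then runs a sequence argument, whereas you bypass that and verify directly for arbitrary $X$ that $X-M_f-D_g\in\mathfrak{C}_p$ with your explicit $f,g$; the underlying computation is the same.
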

\begin{proof}
It suffices to prove that the RHS is closed. Let $(a_n)_{n\in\mathbb{N}}$ be a sequence, such that $a_n=M_{f_n}+D_{g_n}+\mathfrak{C}_p$ converging to some $a\in A_p/\mathfrak{C}_p$, that is 
\begin{align*}
\inf_{u\in\mathfrak{C}_p}\|a_n-a+u\|\rightarrow 0, \text{ as } n\rightarrow +\infty.
\end{align*}
We may assume that $E_0(D_{g_n})=0$. Since $E_0$ is contractive we have that 
\begin{align*}
&\|E_0(a_n)-E_0(a)\|\leq \|a_n-a+u\|, \forall u\in\mathfrak{C}_p\\
\Rightarrow & \|M_{f_n}-E_0(a)\|\leq \inf_{u\in\mathfrak{C}_p}\|a_n-a+u\|\rightarrow 0, \text{ as } n\rightarrow \infty.
\end{align*}
Similarly, we get that $\|D_{g_n}-[Z_0(a)-Z_0(E_0(a))]\|\rightarrow 0, \text{ as } n\rightarrow \infty$, so $\|a_n-[E_0(a)+Z_0(a)-Z_0(E_0(a))] +\mathfrak{C}_p\|$ converges to $0$, as $n$ goes to infinity. Hence
$a=E_0(a)+Z_0(a)-Z_0(E_0(a))+\mathfrak{C}_p$.
\end{proof} 
Let now $\Phi\in \operatorname{Aut}(A_p)$. Then $\Phi$ induces an automorphism $\tilde{\Phi}\in \operatorname{Aut}(A_p/\mathfrak{C}_p)$ and a homeomorphism $\gamma_0$  between the maximal ideals that contain $\mathfrak{C}_p$, defined by 
\begin{align*}
\gamma_0(\zeta)(\alpha+\mathfrak{C}_p)= \zeta(\tilde{\Phi}(\alpha+\mathfrak{C}_p)), \,\zeta\in\mathfrak{M}(A_p/\mathfrak{C}_p).
\end{align*}

Here, we use the fact that every maximal ideal that contains the commutator ideal is the kernel of a character of the algebra. We want to determine these characters. Write
$AAP_1$ and $AAP_2$ for the function algebras,  both isometrically
isomorphic to $AAP(\mathbb{R})$, that are generated by the multiplication and translation
unitary semigroups, respectively.
Define the mapping
\begin{align*}
\mathfrak{M}(A_p)\rightarrow \mathfrak{M}(AAP_1) \times \mathfrak{M}(AAP_2):\zeta\mapsto 
(\zeta|_{AAP_1}, \zeta|_{AAP_2}),
\end{align*}
where the codomain carries the usual product topology.
\begin{lemma}
This map is a homeomorphism onto the subset
\begin{align*}
(\mathfrak{M}(AAP_1) \times \{x_\infty\})\cup (\{x_\infty\}\times \mathfrak{M}(AAP_2)).
\end{align*}
\end{lemma}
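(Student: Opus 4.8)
The plan is to analyze the character space $\mathfrak{M}(A_p)$ by exploiting the fact that $A_p = AAP_1 \vee AAP_2$ is generated by two copies of the analytic almost periodic algebra together with the covariance (Weyl) relation. Since a character $\zeta$ is multiplicative and $AAP_1$, $AAP_2$ are commutative subalgebras, the pair $(\zeta|_{AAP_1}, \zeta|_{AAP_2})$ determines $\zeta$ on the dense subalgebra of generalized trigonometric polynomials $\sum M_{g_s} D_s$ (since $\zeta(M_{g_s}D_s) = \zeta|_{AAP_1}(M_{g_s})\,\zeta|_{AAP_2}(D_s)$), hence on all of $A_p$ by continuity. So the map is injective, and it is clearly continuous (both restrictions are weak-$\ast$ continuous) from a compact space to a Hausdorff space, hence a homeomorphism onto its image; the real content is identifying that image.

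First I would show the image is contained in the stated set. Fix $\zeta \in \mathfrak{M}(A_p)$ and suppose $\zeta|_{AAP_1} \neq x_\infty$. By Proposition \ref{charxinfty} and the structure of $\mathfrak{M}(AAP(\mathbb{R}))$ (via Arens, $\mathfrak{M}(AAP(\mathbb{R}))$ consists of maps $e^{i\lambda x}\mapsto c(\lambda)r^\lambda$ with $c \in \mathbb{R}_B$, $r \in [0,1]$, and $x_\infty$ is the point with $r=0$), saying $\zeta|_{AAP_1}\neq x_\infty$ means there is $\lambda_0>0$ with $\zeta(M_{\lambda_0}) \neq 0$. The key computation is the covariance relation: for $\mu > 0$, $M_{\lambda_0} D_\mu = e^{i\lambda_0 \mu} D_\mu M_{\lambda_0}$ inside $A_p$, so applying $\zeta$ gives $\zeta(M_{\lambda_0})\zeta(D_\mu) = e^{i\lambda_0\mu}\zeta(D_\mu)\zeta(M_{\lambda_0})$, forcing $(1 - e^{i\lambda_0\mu})\zeta(D_\mu)\zeta(M_{\lambda_0}) = 0$. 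Choosing $\mu$ with $\lambda_0\mu \notin 2\pi\mathbb{Z}$ yields $\zeta(D_\mu) = 0$; then using multiplicativity of $\zeta|_{AAP_2}$ and that such $\mu$ generate (as in Lemma \ref{comideapbaby}) one concludes $\zeta(D_\mu) = 0$ for all $\mu > 0$, i.e. $\zeta|_{AAP_2} = x_\infty$. By the symmetric argument (using $D_\mu M_\lambda = e^{-i\lambda\mu}M_\lambda D_\mu$), if $\zeta|_{AAP_2}\neq x_\infty$ then $\zeta|_{AAP_1} = x_\infty$. Hence every $\zeta$ lands in $(\mathfrak{M}(AAP_1)\times\{x_\infty\}) \cup (\{x_\infty\}\times\mathfrak{M}(AAP_2))$.

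For surjectivity, given $\chi_1 \in \mathfrak{M}(AAP_1)$ I need to produce $\zeta \in \mathfrak{M}(A_p)$ with $\zeta|_{AAP_1} = \chi_1$ and $\zeta|_{AAP_2} = x_\infty$ (and symmetrically for the other piece; the overlap point $(x_\infty,x_\infty)$ is covered by both). The natural candidate is $\zeta = \chi_1 \circ E_0$ where $E_0: A_p \to AAP_1$ is the contractive homomorphism of Corollary \ref{contrexpmult} (restricted to the semicrossed product it is multiplicative); on a polynomial $\sum M_{g_s}D_s$ this sends it to $M_{g_0}$ and then $\chi_1$ is applied, giving $\zeta(D_\mu) = \chi_1(1) \cdot [\text{coefficient at } 0] = 0$ for $\mu > 0$, so indeed $\zeta|_{AAP_2} = x_\infty$, while $\zeta|_{AAP_1} = \chi_1 \circ E_0|_{AAP_1} = \chi_1$. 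One checks $\zeta$ is a genuine character (nonzero, multiplicative, which follows from $E_0$ being a homomorphism on the semicrossed product and $\chi_1$ being a character). The symmetric construction uses $Z_0$ in place of $E_0$.

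The main obstacle I anticipate is the surjectivity/well-definedness bookkeeping: verifying carefully that $E_0$ restricted to the semicrossed product $A_p$ really is multiplicative (this is Corollary \ref{contrexpmult}, so it can be cited) and that the resulting $\chi_1 \circ E_0$ is continuous and nonzero, together with handling the consistency at the common point $x_\infty$ so that the two families of characters glue correctly along the intersection. A secondary subtlety is pinning down $\mathfrak{M}(AAP(\mathbb{R}))$ precisely enough to justify that "$\zeta|_{AAP_1} \neq x_\infty$" is equivalent to "$\zeta(M_{\lambda_0})\neq 0$ for some $\lambda_0 > 0$" — this rests on Arens's description of the automorphisms/characters cited before the theorem, so it should be invokable without reproving it.
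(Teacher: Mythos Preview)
Your proposal is correct and follows essentially the same route as the paper: the Weyl relation forces one restriction to be $x_\infty$ whenever the other is not, and surjectivity is obtained by composing an arbitrary character of $AAP_1$ (resp.\ $AAP_2$) with the contractive homomorphism $E_0$ (resp.\ $Z_0$). Your write-up is in fact a bit more careful than the paper's on the points you flag as obstacles (extending $\zeta(D_\mu)=0$ from generic $\mu$ to all $\mu>0$, and the equivalence of $\zeta|_{AAP_1}\neq x_\infty$ with $\zeta(M_{\lambda_0})\neq 0$ for some $\lambda_0>0$), but these are exactly the routine checks the paper leaves implicit.
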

\begin{proof}
Let $\zeta\in\mathfrak{M}(A_p)$, such that $\zeta(M_\lambda)\neq 0$, for some $\lambda>0$.   Then
it follows by the Weyl relations that $\zeta(D_\mu)=0$, for every $\mu>0$. Similarly with the roles of $M_\lambda$ and $D_\mu$ reversed. Hence $\zeta$ maps into the set.
 On the other hand, let $(z,x_\infty)$ be a point in the union set. Define on the generalized trigonometric polynomials the multiplicative linear functional $\zeta$ by
 \begin{align*}
   \zeta \left(\sum_{\lambda,\mu\in F} c_{\lambda,\mu}M_\lambda D_\mu\right)=&
   \sum_{\lambda,\mu\in F} c_{\lambda,\mu}z(M_\lambda) x_\infty(D_\mu)=\\&=
   \sum_{\lambda\in F} c_{\lambda,0} z(M_\lambda).
\end{align*}
But then  $\zeta=z\circ E_0$, so it is bounded and extends to a character of $A_p$.
Similarly, we have that for every point $(x_\infty,z)$ corresponds the character $z\circ Z_0$.   
It remains to show that the map is injective and homeomorphic, but this is routine.   
\end{proof}   

Let $\chi_\infty$ be the preimage of $(x_\infty,x_\infty)$. This the "first coefficient character" on $A_p$ 
\begin{align*}
   \chi_\infty \left(\sum_{\lambda,\mu\in F} c_{\lambda,\mu}M_\lambda D_\mu\right)=c_{0,0}.
\end{align*}
Now,  the maximal ideal space of $AAP(\mathbb{R})$ is the compact topological space $\mathbb{R}_B\times [0,\infty)\cup \{\infty\}$, where $\mathbb{R}_B$ is the Bohr compactification of the real numbers (see Theorem 12.4 in \cite{bks}). Write $\Delta_1, \Delta_2$ for the maximal ideal spaces of $AAP_1$ and $AAP_2$, respectively. 
 Hence, the maximal ideals of $A_p$ that contain $\mathfrak{C_p}$ form the connected topological space 
 \begin{align*}
 \Delta_1 \sqcup _{\chi_\infty} \Delta_2.
 \end{align*}
\begin{lemma}
 $\gamma_0$ fixes $\chi_\infty$. Moreover, either $\gamma_0(\Delta_1)=\Delta_1$, or $\gamma_0(\Delta_1)=\Delta_2$. 
\end{lemma}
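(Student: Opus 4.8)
The plan is to work entirely inside the compact space $X := \Delta_1 \sqcup_{\chi_\infty} \Delta_2$ of maximal ideals of $A_p$ containing $\mathfrak{C}_p$, which by the preceding lemmas is two copies of $\mathfrak{M}(AAP(\mathbb{R})) = \mathbb{R}_B\times[0,\infty)\cup\{\infty\}$ glued along the common point $\chi_\infty$ (the copy in each factor of the ``point at infinity'' $\infty$, i.e. the preimage of $(x_\infty,x_\infty)$). The key topological assertion is that $\chi_\infty$ is the \emph{unique cut point} of $X$, meaning the unique $\zeta\in X$ for which $X\setminus\{\zeta\}$ is disconnected. Since $\gamma_0$ is a self-homeomorphism of $X$, it must then fix $\chi_\infty$, and the stated dichotomy for $\Delta_1$ will follow formally.

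To prove the cut-point claim I would argue in two steps. First, $X\setminus\{\chi_\infty\}$ is the disjoint union of $\Delta_1\setminus\{\chi_\infty\}\cong\mathbb{R}_B\times[0,\infty)$ and $\Delta_2\setminus\{\chi_\infty\}\cong\mathbb{R}_B\times[0,\infty)$; each is nonempty, connected (a product of connected spaces), and clopen in $X\setminus\{\chi_\infty\}$ (because $\Delta_1,\Delta_2$ are closed in $X$). So $X\setminus\{\chi_\infty\}$ is disconnected, and these are exactly its two connected components. Second, for $\zeta\ne\chi_\infty$, say $\zeta\in\Delta_1$, one has $X\setminus\{\zeta\}=(\Delta_1\setminus\{\zeta\})\cup\Delta_2$, a union of two connected sets sharing the point $\chi_\infty$; hence $X\setminus\{\zeta\}$ is connected and $\zeta$ is not a cut point. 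The nontrivial input here is the connectedness of $\Delta_1\setminus\{\zeta\}=\big(\mathbb{R}_B\times[0,\infty)\setminus\{(\omega_0,t_0)\}\big)\cup\{\infty\}$; this reduces, by splitting into the slabs $\mathbb{R}_B\times[0,t_0)$, $\mathbb{R}_B\times(t_0,\infty)$, $(\mathbb{R}_B\setminus\{\omega_0\})\times\{t_0\}$ (each slab meeting the closure of the next, and finally appending $\{\infty\}$ above, with the obvious modification if $t_0=0$), to the single fact that $\mathbb{R}_B\setminus\{\omega_0\}$ is connected for every $\omega_0\in\mathbb{R}_B$.

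I expect this last fact to be the main obstacle, since ``connected with more than one point $\Rightarrow$ connected after deleting a point'' is false in general (an arc, or $\mathbb{R}$, are counterexamples), so a genuine argument using the group structure of $\mathbb{R}_B$ is required. The argument I would use: the dense ``leaf'' $\omega_0+\iota(\mathbb{R})$ through $\omega_0$ (where $\iota:\mathbb{R}\hookrightarrow\mathbb{R}_B$ is the canonical dense embedding) decomposes, on removing $\omega_0$, into the half-leaves $\omega_0+\iota((0,\infty))$ and $\omega_0+\iota((-\infty,0))$; each is a continuous image of a connected set, hence connected, and each is still dense in $\mathbb{R}_B$ because a one-sided Bohr orbit is dense (equivalently $\{(e^{i\lambda_1 t},\dots,e^{i\lambda_k t}):t>0\}$ is dense in $\mathbb{T}^k$ for rationally independent $\lambda_j$). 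Since connected components of $\mathbb{R}_B\setminus\{\omega_0\}$ are closed in it, the component containing a half-leaf is both closed and dense, hence equals $\mathbb{R}_B\setminus\{\omega_0\}$, which is therefore connected.

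Granting that $\chi_\infty$ is the unique cut point, we get $\gamma_0(\chi_\infty)=\chi_\infty$, so $\gamma_0$ restricts to a homeomorphism of $X\setminus\{\chi_\infty\}$. A homeomorphism maps connected components onto connected components, and here there are exactly two, $\Delta_1\setminus\{\chi_\infty\}$ and $\Delta_2\setminus\{\chi_\infty\}$; hence $\gamma_0$ either fixes both or interchanges them. Finally, $\chi_\infty=\infty$ lies in the $X$-closure of each $\Delta_i\setminus\{\chi_\infty\}$, so $\overline{\Delta_i\setminus\{\chi_\infty\}}=\Delta_i$ in $X$, and taking closures of the previous statement yields $\gamma_0(\Delta_1)=\Delta_1$ or $\gamma_0(\Delta_1)=\Delta_2$, as asserted.
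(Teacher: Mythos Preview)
Your proposal is correct and follows the same global strategy as the paper: identify $\chi_\infty$ as the unique cut point of $X=\Delta_1\sqcup_{\chi_\infty}\Delta_2$, conclude that $\gamma_0$ fixes it, and then use that a self-homeomorphism of $X\setminus\{\chi_\infty\}$ permutes its two connected components.

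The difference lies in how the non-cut-point step is handled. You work with the full space $\mathbb{R}_B\times[0,\infty)\cup\{\infty\}$ directly, and the heart of your argument is the lemma that $\mathbb{R}_B\setminus\{\omega_0\}$ is connected, proved via the density of one-sided Bohr orbits. This is correct but is precisely the place where real work is needed. The paper sidesteps this entirely by introducing the dense subset $\mathfrak{M}_{ev}(A_p)$ of evaluation characters at points of $\mathbb{C}^+\cup\{\infty\}$: this subset is just two copies of the closed upper half-plane glued at $\infty$, so removing any point other than $\chi_\infty$ visibly leaves it connected, and since it remains dense in $X$, the ambient space $X\setminus\{\chi\}$ is connected as the closure of a connected set. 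Your route is self-contained and exhibits an interesting fact about $\mathbb{R}_B$; the paper's route trades that for the softer observation that a space containing a dense connected subset is connected, which lets one reason in the elementary space $\mathbb{C}^+$ rather than in the Bohr compactification.
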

\begin{proof}
Given $x\in\mathbb{C}^+\cup\{\infty\}$, let $z_x\in\mathfrak{M}(AAP(\mathbb{R}))$ be the evaluation character at $x$ and $\zeta_x, \eta_x$ be the preimage of the points $(z_x,x_\infty)$ and $(x_\infty,z_x)$, respectively. Note that the set 
\begin{align*}
\mathfrak{M}_{ev}(A_p)=\{ \zeta_x, \eta_x : x\in \mathbb{C}^+\cup\{\infty\}\} 
\end{align*}
is dense in  $\Delta_1 \sqcup _{\chi_\infty} \Delta_2$. Also, with the relative product topology,  this is homeomorphic to the space
\begin{align*}
(\mathbb{C}^+ \times \{\infty\}) \cup (\{\infty\} \times \mathbb{C}^+) \cap \{(\infty, \infty)\}.
\end{align*}

Since $\mathfrak{M}_{ev}(A_p)$ is connected, so is the entire character space $\mathfrak{M}(A_p)$ and its homeomorphic space 
 $\Delta_1 \sqcup _{\chi_\infty} \Delta_2$. If we remove the point $\chi_\infty$, then the character space, with the relative topology, fails to be connected. We claim that $\chi_\infty$ is the only point in the character space with this topological property. 

If $\chi\neq \chi_\infty$ is in $\mathfrak{M}_{ev}(A_p)$, then the set of the remaining evaluation characters, with the relative topology, remains connected, and it contains $\chi$ in its closure. Hence the space 
 $(\Delta_1 \sqcup _{\chi_\infty} \Delta_2)\backslash\{\chi\}$ remains connected. If $\chi$ is a limit character, then once again the space $(\Delta_1 \sqcup _{\chi_\infty} \Delta_2)\backslash\{\chi\}$ contains the dense connected set $\mathfrak{M}_{ev}(A_p)$, so it is connected.

Hence $\chi_\infty$ is a fixed point for homeomorphisms.

Consider now the restriction of the homeomorphism $\gamma_0$ to $(\Delta_1 \sqcup _{\chi_\infty} \Delta_2)\backslash\{\chi_\infty\}$. Since every homeomorphism maps connected components to connected components, the second assertion of the lemma follows.  
\end{proof}
Hence we have two cases.
\begin{description}
\item[Case 1 ] $\gamma_0$ keeps $\Delta_1$ and $\Delta_2$ fixed.
Let $x\in\mathbb{R}$ and let $\zeta_x, \eta_x$ be the characters defined in the proof of the previous lemma.  Since $\gamma_0$ keeps $\Delta_1$ invariant, we have
 \begin{align*} 0=\gamma_0(\zeta_x)(D_\mu)=\zeta_x(\tilde{\Phi}(D_\mu))=E_0(\tilde{\Phi}(D_\mu))(x). 
 \end{align*}
 Hence $E_0(\tilde{\Phi}(D_\mu))=0$ for every $\mu>0$. Therefore $\tilde{\Phi}(D_\mu+\mathfrak{C}_p)=
 D_h+\mathfrak{C}_p$, for some $h\in AAP(R)$. Repeating the argument for $\tilde{\Phi}^{-1}$, we have that $\tilde{\Phi}|_{Z_0(A_p/\mathfrak{C}_p)}$ gives an automorphism of $AAP(\mathbb{R})$. Thus, it follows by Theorem \ref{autapisomisom} that
 \begin{align*}
\tilde{\Phi}(D_\mu+\mathfrak{C}_p)=
 c(\mu) D_{k_1\mu}+\mathfrak{C}_p, \,\text{ for some }k_1>0,\, c(\mu)\in\mathbb{T}.
 \end{align*}
 Applying the same argument on the elements $\tilde{\Phi}(M_\lambda+\mathfrak{C}_p)$, using the $\eta_x$ characters this time, we get 
  \begin{align*}
\tilde{\Phi}(M_\lambda+\mathfrak{C}_p)=
 d(\lambda) M_{k_2\lambda}+\mathfrak{C}_p, \,\text{ for some }k_2>0,\, d(\lambda)\in\mathbb{T}.
 \end{align*}
Hence $\Phi(M_\lambda)=d(\lambda) M_{k_2\lambda}+A$, where $A$ lies in $\mathfrak{C}_p$. 
The following lemma is the only point of the proof of Theorem \ref{apisom} that we will make use of the fact that $\Phi$ is isometric.\\
\begin{lemma}\label{uni}
$\Phi(M_\lambda)=d_\lambda M_{k_2\lambda}$.
\end{lemma}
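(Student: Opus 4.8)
The plan is to show that the element $A \in \mathfrak{C}_p$ with $\Phi(M_\lambda) = d(\lambda)M_{k_2\lambda} + A$ must vanish, exploiting the fact that $M_\lambda$ is unitary and that $\Phi$ is isometric. Since $\Phi$ is an isometric automorphism, $\Phi(M_\lambda)$ has the same norm as $M_\lambda$, namely $1$, and moreover $\Phi(M_\lambda)$ is invertible with $\Phi(M_\lambda)^{-1} = \Phi(M_{-\lambda})$ of norm $1$ as well — so $\Phi(M_\lambda)$ is a \emph{doubly power-bounded} invertible element; in fact $\|\Phi(M_\lambda)^n\| = \|\Phi(M_{n\lambda})\| = 1$ for every $n \in \bZ$. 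The idea is that such an element behaves like a unitary for norm purposes, and combining this with the Fourier-coefficient structure should force the ``correction term'' $A$ to be zero.

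First I would apply the contractive expectation $E_0$ (and $Z_0$, and more generally the Fourier coefficient maps relative to the translation variable) to pin down $A$. Since $A \in \mathfrak{C}_p = \{\alpha \in A_p : E_0(\alpha) = Z_0(\alpha) = 0\}$ by Lemma \ref{comideapbaby}, the element $\Phi(M_\lambda)$ has $E_0(\Phi(M_\lambda)) = d(\lambda)M_{k_2\lambda}$ and $Z_0(\Phi(M_\lambda)) = 0$. The key inequality is the column-domination estimate of Proposition \ref{estimforredcros}: applied in the crossed product picture $A_p = AAP(\mathbb{R})\times_\tau \mathbb{R}_d^+$, for any unit vector $\xi$ and any finite set $F \subset \mathbb{R}^+$,
\begin{align*}
\|\Phi(M_\lambda)^n\|^2 \geq \sum_{s \in F} \|\tau_{-s}(E_s(\Phi(M_\lambda)^n))\xi\|^2.
\end{align*}
Now $\Phi(M_\lambda)^n = \Phi(M_{n\lambda}) = d(n\lambda)M_{k_2 n\lambda} + A_n$ where $A_n \in \mathfrak{C}_p$, and $E_0$ of this is the unimodular multiple of a unitary, of norm $1$. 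If $A$ (equivalently $A_n$ for some $n$) were nonzero, it would have a nonzero Fourier coefficient $E_{s_0}(A) \neq 0$ with $s_0 > 0$; tracking how this coefficient propagates under powers (using $E_{m_1+m_2}(XY) = E_{m_1}(X)\tau_{m_1}(E_{m_2}(Y))$ as in the integral-domain claim earlier) and feeding the resulting lower bound into the column estimate, we would get $\|\Phi(M_\lambda)^n\|^2 > 1$ for large $n$ — contradicting $\|\Phi(M_\lambda)^n\| = 1$.

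A cleaner route, which I would pursue first, is to observe that $\Phi(M_\lambda)$ generates a commutative Banach algebra and use the spectral/norm rigidity directly: since $\|\Phi(M_\lambda)^n\| = 1$ for all $n \in \bZ$, the spectrum of $\Phi(M_\lambda)$ lies in $\bT$, and by the spectral radius formula together with the estimate $\|x\|^2 \le \|x^* x\|$ failing (we are not in a C$^*$-algebra, so care is needed) — instead I would pass to the C$^*$-algebra $B_p$ where $A_p$ sits, note that the element still satisfies $\|(\Phi(M_\lambda))^{\pm n}\| = 1$, and deduce that $\Phi(M_\lambda)$ is unitary in $B_p$ by the standard fact that a doubly power-bounded element of a C$^*$-algebra with spectrum in $\bT$ whose powers all have norm $1$ is unitary (equivalently, $\Phi(M_\lambda)^* \Phi(M_\lambda) = 1$). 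Then, being a unitary in $B_p$ lying in $A_p$ with $E_0(\Phi(M_\lambda)) = d(\lambda) M_{k_2\lambda}$ and all negative-index and $Z_0$ coefficients zero, a direct computation with $\Phi(M_\lambda)^*\Phi(M_\lambda) = 1$ forces all higher Fourier coefficients to vanish, i.e.\ $A = 0$.

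The main obstacle I anticipate is the step asserting that a norm-one, doubly-power-bounded, spectrum-in-$\bT$ element must be unitary: this is a genuine C$^*$-algebra statement (it is false in general Banach algebras) and requires that we have honestly embedded $A_p$ in the C$^*$-algebra $B_p$ with $\Phi(M_\lambda)$ viewed there, \emph{and} that the powers of $\Phi(M_\lambda)$ computed inside $A_p$ agree with those in $B_p$ (which they do, since $A_p \subseteq B_p$ is a subalgebra and $\Phi(M_{n\lambda}) = \Phi(M_\lambda)^n$). Getting the norm identity $\|\Phi(M_\lambda)^n\| = \|\Phi(M_{n\lambda})\| = 1$ is immediate from $\Phi$ being an isometric \emph{algebra} homomorphism and $M_{n\lambda}$ being unitary; the subtlety is purely in converting that into $\Phi(M_\lambda)^*\Phi(M_\lambda) = 1$ and then squeezing out $A = 0$ via the Fourier-coefficient bookkeeping.
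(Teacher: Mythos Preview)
Your proposal circles the right idea but misses the one observation that collapses the argument to two lines in the paper: apply the column estimate of Proposition~\ref{estimforredcros} to $\Phi(M_\lambda)$ itself (no powers), with $0$ included in the finite index set. For any unit vector $\xi\in L^2(\mathbb{R})$,
\[
1=\|\Phi(M_\lambda)\|^2\ \ge\ \|E_0(\Phi(M_\lambda))\,\xi\|^2+\sum_{s\in F,\ s>0}\|\tau_{-s}(E_s(A))\,\xi\|^2
\ =\ \|d(\lambda)e^{ik_2\lambda x}\xi\|^2+\sum_{s>0}\|\tau_{-s}(E_s(A))\,\xi\|^2,
\]
and the first term on the right is already equal to $1$ because $d(\lambda)e^{ik_2\lambda x}$ is a unimodular function. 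Hence every $E_s(A)$ vanishes and $A=0$ by Corollary~\ref{uniqfoco}. You wrote down precisely this inequality, observed that the zeroth coefficient is ``a unimodular multiple of a unitary, of norm $1$'', and then walked past the conclusion to start tracking powers.

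Regarding your two routes: the power-tracking route has a genuine gap as written, since the identity $E_{m_1+m_2}(XY)=E_{m_1}(X)\,\tau_{m_1}(E_{m_2}(Y))$ you invoke holds only for the \emph{lowest} nonvanishing index of a product, not for arbitrary indices, so the claimed growth of the lower bound under iteration is unjustified (and, as above, unnecessary). Your second route---deducing from $\|\Phi(M_\lambda)^{\pm1}\|=1$ that $\Phi(M_\lambda)$ is unitary in the ambient C$^\ast$-algebra $B_p$, and then reading off $A=0$ from $\Phi(M_\lambda)^\ast\Phi(M_\lambda)=1$---is correct and is a genuinely different (if longer) path. Its endgame, however, is the same saturation: applying $E_0$ to the unitarity relation yields $\sum_s \tau_{-s}(|E_s(\Phi(M_\lambda))|^2)=1$ in $AP(\mathbb{R})$, and since the $s=0$ summand is already identically $1$, all the others must vanish. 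So your C$^\ast$ detour reproduces exactly the inequality the paper exploits directly via Proposition~\ref{estimforredcros}.
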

\begin{proof}
First note that 
\begin{align*}
\|\Phi(M_\lambda)\|=\|M_\lambda\|=1=\|d(\lambda) M_{k_2\lambda}\|.
\end{align*}
If suffices to prove that every Fourier coefficient of $A$ is zero. We consider the left regular representation $(\tilde{id},\Lambda)$ of the crossed product. Let $F$ be a finite subset of positive real numbers and $\xi$ a norm one function in $L^2(\mathbb{R})$.
By Proposition \ref{estimforredcros} we have

\begin{align*}
1=\|\Phi(M_\lambda)\|^2&
\geq \sum_{s\in F\cup \{0\}} \|\tau_{-s}(E_s(\Phi(M_\lambda)))\cdot \xi\|^2_{L^2(\mathbb{R})}=\\
&=
\|d(\lambda) e^{ik_2 \lambda x}\cdot\xi\|^2_{L^2(\mathbb{R})}
+\sum_{s\in F} \|\tau_{-s}(E_s(A))\cdot \xi\|^2_{L^2(\mathbb{R})}
=\\&=
1
+\sum_{s\in F} \|\tau_{-s}(E_s(A))\cdot \xi\|^2_{L^2(\mathbb{R})}.
\end{align*}
 
 So $\tau_{-s}(E_s(A))=0$, which implies that $E_s(A)$ is the zero function, for every $s\in F$. Since $F$ was arbitrarily chosen, we obtain by Corollary \ref{uniqfoco} that $A=0$.
\end{proof}

Similarly using the left regular representation that corresponds to the $(\rho\rtimes M)$ representation of the crossed product, we obtain $\Phi(D_\mu)=c(\mu) D_{k_1\mu}$.

Now the Weyl relations yield 
\begin{align*}
\Phi(M_\lambda D_\mu)=\Phi(e^{i\lambda \mu} D_\mu M_\lambda).
\end{align*}
The LHS gives 
\begin{align*}
\Phi(M_\lambda D_\mu)&=\Phi(M_\lambda)\phi(D_\mu)=d(\lambda) M_{k_2\lambda}c(\mu) D_{k_1\mu}=\\&= d(\lambda) c(\mu) e^{i\lambda k_1k_2 \mu} D_{k_1 \mu} M_{k_2\lambda}, 
 \end{align*}
 while the RHS is
 \begin{align*}
 \Phi(e^{i\lambda \mu} D_s M_\lambda)=
 e^{i\lambda \mu}c(\mu) D_{k_1 \mu} d(\lambda) M_{k_2\lambda}=
 e^{i\lambda \mu} d(\lambda) c(\mu) D_{k_1 \mu} M_{k_2\lambda}.
 \end{align*}
 Therefore $k_1 k_2=1$ and so the automorphisms $\Phi(M_\lambda D_\mu)=M_{k_2\lambda}D_{k_1\mu}$ correspond to the automorphisms $\operatorname{Ad}(V_t)$, by taking $t=\log k_1$. Each automorphism of the above form is induced by a covariant respesentation of $(AP(\mathbb{R}),\mathbb{R}_d,\tau)$, so by the universal property of the crossed product it extends to an algebra automorphism of $B_p$.

Define on $L^2(\mathbb{R})$ the covariant representation $(y_{d,t},w_{c,t})$ of the $C^\ast-$dynamical system $(AP(\mathbb{R}),\mathbb{R}_d, \tau)$, where
\begin{align*}
y_{d,t}:AP(\mathbb{R})\rightarrow B(L^2(\mathbb{R})): f\mapsto M_{\tilde{\operatorname{id}}\rtimes u_{d,e^t}(f)},
\end{align*}
where $\tilde{\operatorname{id}}\rtimes u_{d,e^t}$ are given in equation \eqref{eqautapbab}, and
\begin{align*}
w_{c,t}:\mathbb{R}_d\rightarrow B(L^2(\mathbb{R})): \mu\mapsto c(\mu) D_{\mu e^{-t}}. 
\end{align*}
Indeed, the pair $(y_{d,t},w_{c,t})$ is a covariant representation, since

\begin{align*}
w_{c,t}(\mu) y_{d,t}(e^{i\lambda x})w_{c,t}(-\mu)&= c(\mu) D_{\mu e^{-t}}d(\lambda) M_{\lambda e^t}c(-\mu)D_{-\mu e^{-t}}=\\&
= e^{-i\lambda \mu} d(\lambda) M_{\lambda e^{t}}=e^{-i\lambda \mu}y_{d,t}(e^{i\lambda x}).
= y_{d,t}(\tau_\mu(e^{i\lambda x}))
\end{align*}
Hence, by the universal property of the crossed product, we obtain the induced isometric automorphism $y_{d,t}\rtimes w_{c,t}$ of $B_p$ that satisfies 
\begin{align*}
 M_\lambda D_\mu\mapsto d(\lambda) c(\mu) M_{\lambda e^t} D_{\mu e^{-t}}.
\end{align*}
 It is evident now that the automorphism $\Phi$ given in relation \eqref{automapal} is of the form $y_{d,t}\rtimes w_{c,t}$ (restricted to $A_p$), for some $t\in\mathbb{R}$ and $c,d\in\mathbb{R}_B$. 

\item[Case 2 ] $\gamma_0$ flips $\Delta_1$ and $\Delta_2$.
 Repeating the argument of the previous case, we end up with  
 \begin{align*}
 \Phi(M_\lambda)= d(\lambda) D_{k_1\lambda} ~~\text{ and }~~
 \Phi(D_\mu)= c(\mu) M_{k_2\mu}.
 \end{align*}
 Applying again the Weyl commutation relations, we calculate 
\begin{align*}
 \Phi(M_\lambda)\Phi(D_\mu)= e^{i\lambda \mu} \Phi(D_\mu)\Phi(M_\lambda)&\Leftrightarrow d(\lambda) c(\mu)D_{k_1 \lambda} M_{k_2 \mu}= e^{i\lambda \mu}d(\lambda) c(\mu)M_{k_2 \mu} D_{k_1 \lambda}\\
 &\Leftrightarrow 
 d(\lambda) c(\mu)D_{k_1 \lambda} M_{k_2 \mu}= e^{i\lambda \mu(1+ k_1 k_2)}d(\lambda) c(\mu)D_{k_1 \lambda} M_{k_2 \mu}
\end{align*}
which implies that $k_1 k_2 =-1$, but this is impossible, since $k_1, k_2$ are both positive real numbers.
 \end{description}
This completes the proof of Theorem \ref{apisom}. \qed

\section{Triple semigroup algebras}

As described in the previous section, the dilation operators $\{V_t\,:\, t\in\mathbb{R}\}$ implement isometric automorphisms of the $C^\ast$-algebra $B_p$. 
 Let $G$  be the discrete group $\mathbb{R}_d$ or $\mathbb{Z}$ and $(B_p, G,v)$ be the $C^\ast$-dynamical system, where $v$ is the group of automorphisms that are unitarily implemented by the operators $V_t$
\begin{align*}
v:G\rightarrow \operatorname{Aut}(B_p):t\mapsto v_t= \operatorname{Ad}(V_t).
\end{align*}
Hence, this enables us to define the crossed product, denoted by $B_p\times_v G$. Denote by $H_k$  the contraction from $B_p\times_v G$ onto $B_p$
\begin{align*}
H_k(\sum_{\substack{\lambda,\mu, t\in F\\F\text{ finite}}}(\epsilon_t\otimes c_{\lambda,\mu, t} M_\lambda D_\mu))= 
\sum_{\substack{\lambda,\mu, t\in F\\F\text{ finite}}}c_{\lambda,\mu, k} M_\lambda D_\mu.
\end{align*}
Our next goal is to show that the norm closed algebra
\begin{align*}
B_{ph}^G:=\|\cdot\|\textrm{-alg}\{M_\lambda, D_\mu, V_t\,:\, \lambda,\mu\in \mathbb{R}, t\in G\}
\end{align*}
is  isometrically isomorphic to $B_p\times_v G$. 
By the universal property of the crossed product we have the representation 
\begin{align*}
((\pi\rtimes D)\rtimes V )\sum_{\substack{\lambda,\mu, t\in F\\F\text{ finite}}}(\epsilon_t\otimes c_{\lambda,\mu, t} M_\lambda D_\mu) \mapsto \sum_{\substack{\lambda,\mu, t\in F\\F\text{ finite}}} c_{\lambda,\mu, t} M_\lambda D_\mu V_t.
\end{align*}
The following proposition is the key to prove that the above representation is actually an isometric isomorphism.

\begin{prop}\label{expect}
Given $t_0\in G$, the mapping 
  \begin{align*}
  \sum_{\substack{\lambda,\mu, t\in F\\F\text{ finite}}} c_{\lambda,\mu, t} M_\lambda D_\mu V_t\mapsto 
   \sum_{\substack{\lambda,\mu, t\in F\\F\text{ finite}}} c_{\lambda,\mu,t_0} M_\lambda D_\mu 
   \end{align*}
is contractive, so it extends to a linear contraction $\tilde{H}_{t_0}$ on $B_{ph}^G$.
  \end{prop}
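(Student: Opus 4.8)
The plan is to produce the map $\tilde H_{t_0}$ as a compression of a suitable $*$-representation of $B_{ph}^G$, so that contractivity is automatic. Since the argument for the full crossed product $B_p \times_v G$ reduces (by Proposition \ref{redeqfull}, when $G = \mathbb{R}_d$, and by the analogous classical fact for $G = \mathbb{Z}$) to the reduced crossed product, it suffices to work with the left regular representation $(\tilde{\operatorname{id}} \rtimes \Lambda)$ of $(B_p, G, v)$ acting on $\ell^2(G, L^2(\mathbb{R}))$. The key point is exactly the one already exploited in Lemma \ref{uni}: for $X = \sum_{t \in F}(\epsilon_t \otimes B_t) \in B_p G$ one has $V_{t_0}^\ast (\tilde{\operatorname{id}} \rtimes \Lambda)(X) V_0 = v_{-t_0}(H_{t_0}(X))$, where here $V_s : L^2(\mathbb{R}) \to \ell^2(G, L^2(\mathbb{R}))$ denotes the isometry $\xi \mapsto \delta_{s,\xi}$ of the preliminaries, not a dilation. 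Composing with the automorphism $v_{t_0} = \operatorname{Ad}(V_{t_0})$ (using now the dilation $V_{t_0}$) gives that $X \mapsto H_{t_0}(X)$ is the compression $Y \mapsto V_{t_0} \, v_{t_0}\!\left( W_{t_0}^\ast Y W_0 \right) V_{t_0}^\ast$ against two isometries, hence contractive on $B_p G$, and therefore extends to a contraction $\tilde H_{t_0}$ on $B_p \times_v G \cong B_{ph}^G$.

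The steps, in order, are as follows. First I would record the identification $B_{ph}^G \cong B_p \times_v G$ that the surrounding text is in the process of establishing: strictly speaking the Proposition is the tool used to finish that identification, so I would phrase the argument purely inside $B_p \times_v G$ (or its reduced version) and only at the very end transport the conclusion across the representation $(\pi \rtimes D)\rtimes V$. Second, I would spell out the covariance computation $W_{t_0}^\ast (\tilde{\operatorname{id}} \rtimes \Lambda)(\epsilon_t \otimes B) W_0 = \delta_{t_0,t}\, v_{-t_0}(B)$ on a generalized trigonometric polynomial, exactly as in Proposition \ref{estimforredcros}; this gives $W_{t_0}^\ast (\tilde{\operatorname{id}} \rtimes \Lambda)(X) W_0 = v_{-t_0}(H_{t_0}(X))$. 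Third, I would apply $v_{t_0}$ to undo the twist and conclude that $H_{t_0} = \operatorname{Ad}(V_{t_0}) \circ (\text{compression by } W_{t_0}, W_0)$ on the dense subalgebra of trigonometric polynomials, hence $\|H_{t_0}(X)\| \le \|X\|_r = \|X\|$. Fourth, by density and continuity $H_{t_0}$ extends to the claimed contraction $\tilde H_{t_0}$, and under the isometric isomorphism $(\pi\rtimes D)\rtimes V$ this is precisely the stated map on $B_{ph}^G$.

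The main obstacle I anticipate is a bookkeeping one rather than a conceptual one: there is a genuine clash of notation, since $V$ is used both for the dilation unitaries $V_t$ on $L^2(\mathbb{R})$ and (in the crossed-product preliminaries) for the embedding isometries $V_s : H \to \ell^2(G,H)$. I would resolve this by introducing a fresh symbol (say $W_s$) for the embedding isometries in this proof, and be careful that the Hilbert space underlying the left regular representation of $(B_p, G, v)$ is $\ell^2(G, L^2(\mathbb{R}))$, so that $H_{t_0}$ lands back in $B_p \subseteq B(L^2(\mathbb{R}))$ and not in the bigger algebra. The only other mild subtlety is the case $G = \mathbb{Z}$ versus $G = \mathbb{R}_d$: for $\mathbb{Z}$ one does not need Proposition \ref{redeqfull} and can quote the classical equality of full and reduced crossed products by $\mathbb{Z}$; for $\mathbb{R}_d$ one invokes Proposition \ref{redeqfull} directly. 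Everything else — the covariance identity and the compression estimate — is a routine transcription of the computations already carried out in Section 2.
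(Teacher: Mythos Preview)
Your argument has a genuine circularity. The whole point of Proposition~\ref{expect} is to prove contractivity of the slice map with respect to the \emph{operator norm on $L^2(\mathbb{R})$}, i.e.\ on the concrete algebra $B_{ph}^G\subseteq B(L^2(\mathbb{R}))$. This is then used in the very next Corollary to show that the representation $(\pi\rtimes D)\rtimes V$ is faithful, hence isometric. Your compression argument in the left regular representation establishes only that $H_{t_0}$ is contractive with respect to the (reduced, hence full) crossed product norm on $B_p\times_v G$ --- which is already the content of Proposition~\ref{conexp} and needs no further work. The problem is the final ``transport'' step: at this stage of the paper one knows only that $(\pi\rtimes D)\rtimes V$ is a $*$-homomorphism, hence contractive, so for a polynomial $X$ with image $Y$ one has $\|Y\|_{B(L^2(\mathbb{R}))}\le \|X\|_{B_p\times_v G}$. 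Combining this with $\|H_{t_0}(X)\|\le\|X\|$ gives no control of $\|H_{t_0}(X)\|$ by $\|Y\|$; the inequality points the wrong way. You explicitly invoke ``the isometric isomorphism $(\pi\rtimes D)\rtimes V$'' in your last step, but that isomorphism is exactly what Proposition~\ref{expect} is being used to establish.

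The paper avoids this by proving the estimate \emph{inside} $B(L^2(\mathbb{R}))$, with no reference to the abstract crossed product: for a finite sum $Y=\sum c_{\lambda,\mu,t}M_\lambda D_\mu V_t$, one conjugates by translations $D_{M_n}$ with $M_n\to\infty$ chosen (via Poincar\'e recurrence) so that $e^{i\lambda M_n}\to 1$ for every $\lambda\in F$, observes that $D_{M_n}V_t D_{M_n}^\ast\to 0$ in WOT for $t\neq 0$, and concludes that $D_{M_n}Y D_{M_n}^\ast$ converges WOT to the $t=0$ slice. Since unitary conjugation preserves the operator norm and WOT limits do not increase it, the contractivity follows directly in $B(L^2(\mathbb{R}))$. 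Your approach would become valid only \emph{after} the identification $B_{ph}^G\cong B_p\times_v G$ is in hand, at which point the Proposition is no longer needed.
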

  \begin{proof}
  It suffices to prove it for $t_0=0$.
 By Poincare's recurrence theorem \cite{fur}, there exists an increasing unbounded sequence $\{M_n\}_{n\in\mathbb{N}}$ of natural numbers, such that 
  \begin{align*}
  e^{i\lambda M_n}\rightarrow 1, \text{ as } n\rightarrow\infty \text{ and for all }  \lambda\in F.
\end{align*}
Since $D_{M_n}V_t D_{M_n}^\ast\stackrel{\textrm{WOT}}{\rightarrow} 0$ for every $t\neq 0$, one can check that
\begin{align*}
\lim_{n\rightarrow \infty}\langle D_{M_n} c_{\lambda,\mu, t} M_\lambda D_\mu V_t D_{M_n}^\ast f,g\rangle 
=
\left\{
	\begin{array}{ll}
		\langle c_{\lambda,\mu,0} M_\lambda D_\mu f,g\rangle,  & \mbox{if } t=0 \\
		0 & \mbox{if } t \neq 0
	\end{array}
\right. 
\end{align*}
Hence
\begin{align*}
\sum_{\substack{\lambda,\mu, t\in F\\F\text{ finite}}} c_{\lambda,\mu, t} D_{M_n}M_\lambda D_\mu V_t D_{M_n}^\ast\stackrel{\textrm{WOT}}{\rightarrow} 
 \sum_{\substack{\lambda,\mu, t\in F\\F\text{ finite}}} c_{\lambda,\mu,0} M_\lambda D_\mu.
\end{align*}
Therefore, the proof follows by observing that 
\begin{align*} 
\bigg\langle\sum_{\substack{\lambda,\mu, t\in F\\F\text{ finite}}} c_{\lambda,\mu, t} D_{M_n}M_\lambda D_\mu V_t D_{M_n}^\ast
f,g\bigg\rangle \leq \|\sum_{\substack{\lambda,\mu, t\in F\\F\text{ finite}}} c_{\lambda,\mu, t} M_\lambda D_\mu V_t \|\,\|f\|\,\|g\|,
\end{align*}
for all $f,g \in L^2(\mathbb{R})$.
\end{proof}

\begin{cor}
The map $(\pi\rtimes D)\rtimes V$ is an isometric $\ast$-isomorphism of the C$^\ast$-algebra $B_p\rtimes_v \mathbb{Z}$.
\end{cor}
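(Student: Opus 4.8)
The plan is to show that the $\ast$-homomorphism $(\pi\rtimes D)\rtimes V$ is \emph{injective}; an injective $\ast$-homomorphism between $C^\ast$-algebras is automatically isometric, and its range is the closed $\ast$-algebra generated by $M_\lambda,D_\mu,V_t$, namely $B_{ph}^{\bZ}$, so this yields an isometric $\ast$-isomorphism onto $B_{ph}^{\bZ}$.

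The engine is the compatibility of the abstract Fourier coefficients $H_k\colon B_p\times_v\bZ\to B_p$ with the concrete maps $\tilde H_k$ of Proposition \ref{expect}. First I would check, on a generalized trigonometric polynomial $\sum(\epsilon_t\otimes c_{\lambda,\mu,t}M_\lambda D_\mu)$, that
\begin{align*}
\tilde H_k\circ\big((\pi\rtimes D)\rtimes V\big)=H_k,\qquad k\in\bZ,
\end{align*}
since both sides return the $k$-th block $\sum c_{\lambda,\mu,k}M_\lambda D_\mu$. Because $H_k$, $\tilde H_k$ and $(\pi\rtimes D)\rtimes V$ are all bounded and the trigonometric polynomials are dense in $B_p\times_v\bZ$, this identity extends to all of $B_p\times_v\bZ$.

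Now suppose $X\in B_p\times_v\bZ$ satisfies $\big((\pi\rtimes D)\rtimes V\big)(X)=0$. Applying the identity gives $H_k(X)=0$ for every $k\in\bZ$. By the classical theory of crossed products by $\bZ$ (cf.\ \cite{dav2}), an element of the crossed product is recovered in norm from its Fourier coefficients through its Ces\`aro (Fej\'er) means $\sum_{|k|\le N}\big(1-\tfrac{|k|}{N+1}\big)\big(\epsilon_k\otimes H_k(X)\big)$; since these all vanish, $X=0$. Hence $(\pi\rtimes D)\rtimes V$ is injective, therefore isometric, and it is an isometric $\ast$-isomorphism onto $B_{ph}^{\bZ}$.

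The only conceptually substantive point is the last one --- that Ces\`aro summability recovers an element of the crossed product from its Fourier coefficients; for $G=\bZ$ this is standard. I would also remark that the same argument covers the case $G=\bR_d$ verbatim once this input is replaced by Theorem \ref{Cesaro covergence} (equivalently Corollary \ref{uniqfoco}), which is precisely why the $\bR_d$ machinery of Section~2 was developed. Verifying the intertwining identity and invoking the automatic continuity of injective $\ast$-homomorphisms are routine.
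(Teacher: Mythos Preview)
Your proof is correct and follows essentially the same route as the paper's: verify the intertwining $\tilde H_k\circ\big((\pi\rtimes D)\rtimes V\big)=(\pi\rtimes D)\circ H_k$ on trigonometric polynomials, extend by density, deduce that an element of the kernel has all Fourier coefficients zero, and invoke the Fej\'er-type summability for crossed products by $\bZ$ to conclude $X=0$. The only cosmetic difference is that the paper records the right-hand side as $(\pi\rtimes D)\circ H_k$ rather than $H_k$, but since $\pi\rtimes D$ is the (faithful) identity representation of $B_p$ on $L^2(\bR)$ this is the same statement.
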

\begin{proof}
Let $X\in B_p\rtimes_v \mathbb{Z}$, such that $((\pi\rtimes D)\rtimes V)(X)=0$. Then by the previous proposition we get
$\tilde{H}_k (((\pi\rtimes D)\rtimes V)(X))=0$, for every $k\in G$.
But $\tilde{H}_k \circ((\pi\rtimes D)\rtimes V)=(\pi\rtimes D)\circ H_k$, since the equality holds for trigonometrical polynomials.
Therefore $((\pi\rtimes D)\circ H_k)(X)=0$, which implies that $H_k(X)=0$, so $X=0$.
Hence the representation $(\pi\rtimes D)\rtimes V$ is faithful, so isometric.
\end{proof}
We denote by $A_{ph}^{G^+}$ the norm closed algebra that is generated by the semigroups of $M_\lambda, D_\mu, V_t$, where $\lambda,\mu\in \mathbb{R}^+, t \in G^+$. The algebra $A_{ph}^{\mathbb{Z}^+}$ is called the \textbf{partially discrete triple semigroup algebra}, while the algebra $A_{ph}^{\mathbb{R}^+}$  is called the \textbf{triple semigroup algebra}.

Let $\mathfrak{C}_{ph}^{G^+}$ be the commutator ideal of $A_{ph}^{G^+}$. To describe $\mathfrak{C}_{ph}^{G^+}$ we need first the following lemma. 

Fix $t>0$ and let $J_t$ be the closed ideal of $AAP(\mathbb{R})$ generated by the functions of the form 
\begin{align*}
e^{i\lambda x} - \phi_{0,e^t}(e^{i\lambda x})=e^{i\lambda x}-e^{i\lambda e^t x}, 
\end{align*}
for $\lambda>0$.
\begin{lemma}
The ideal $J_t$ is equal to the ideal $I_0=\{f\in AAP(\mathbb{R}) : f(0)= x_\infty(f)=0\}$.
\end{lemma}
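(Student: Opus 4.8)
The plan is to prove the two inclusions $J_t \subseteq I_0$ and $I_0 \subseteq J_t$ separately, using the character description of $AAP(\mathbb{R})$ that has already been set up. First, observe that $I_0$ is genuinely a closed ideal: it is the intersection of the kernels of the two characters $f \mapsto f(0)$ (evaluation, which is multiplicative on $AAP(\mathbb{R})$) and $x_\infty$, so it is closed and two-sided. For the inclusion $J_t \subseteq I_0$, it suffices to check that each generator $g_\lambda := e^{i\lambda x} - e^{i\lambda e^t x}$ lies in $I_0$; this is immediate since $g_\lambda(0) = 1 - 1 = 0$, and $x_\infty(g_\lambda) = 0$ because $x_\infty$ extracts the coefficient of the constant term $e^{i0x}$, and for $\lambda > 0$ neither $e^{i\lambda x}$ nor $e^{i\lambda e^t x}$ contributes to that coefficient. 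Since $I_0$ is a closed ideal containing all the generators of $J_t$, we get $J_t \subseteq I_0$.

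The substance is the reverse inclusion $I_0 \subseteq J_t$. The natural strategy is to show that $J_t$ already contains a dense subset of $I_0$, namely the trigonometric polynomials $p(x) = \sum_{\lambda \in F} c_\lambda e^{i\lambda x}$ with $F \subset\subset \mathbb{R}^+$, $0 \notin F$, together with those with $0 \in F$ but $c_0 = 0$ — since such polynomials are exactly the trigonometric polynomials lying in $I_0$, and they are dense there by the Bochner–Fejér approximation (Theorem~\ref{Cesaro covergence} applied in the crossed product picture $AAP(\mathbb{R}) = \mathbb{C}\times_{\operatorname{id}}\mathbb{R}_d$ restricted to nonnegative frequencies, intersected with $\ker x_\infty$). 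For a single monomial $e^{i\lambda x}$ with $\lambda > 0$, I would write it as a telescoping-type combination: $e^{i\lambda x} = (e^{i\lambda x} - e^{i\lambda e^t x}) + e^{i\lambda e^t x}$, and note that the first summand is a generator of $J_t$ while the second is again a monomial with a \emph{larger} positive frequency $\lambda e^t$. Iterating, $e^{i\lambda x} - e^{i\lambda e^{nt} x} \in J_t$ for every $n$. Since $e^{i\lambda e^{nt}x}$ has sup-norm $1$ this does not converge, so a bare telescoping argument does not close the gap; instead one must use that $J_t$ is an \emph{ideal}, not merely a closed subspace.

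The clean way to exploit the ideal structure: for any $\mu > 0$ and the generator $g_\mu = e^{i\mu x} - e^{i\mu e^t x}$, multiplying by $e^{i\nu x}$ (an element of $AAP(\mathbb{R})$, $\nu \geq 0$) shows $e^{i(\mu+\nu)x} - e^{i(\mu e^t + \nu)x} \in J_t$ for all $\nu \geq 0$, $\mu > 0$. Combining such relations, I would show that for any two positive frequencies $\alpha, \beta$ with $\alpha \neq \beta$ one has $e^{i\alpha x} - e^{i\beta x} \in J_t$: concretely, writing $\alpha - \beta$ and using that $\{e^{nt} : n \in \mathbb{Z}\}$-scalings together with additive shifts by nonnegative reals connect any two positive reals (one picks $\mu$ small and $\nu$ appropriately, possibly taking a finite sum of generator multiples whose telescoping cancels all intermediate terms and leaves $e^{i\alpha x} - e^{i\beta x}$), we land every difference of two positive-frequency monomials in $J_t$. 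Granting this, any trigonometric polynomial $p \in I_0$ (so $\sum c_\lambda = p(0)$ need not vanish, but the constant coefficient vanishes) can be written as $p(x) = \sum_{\lambda \in F} c_\lambda e^{i\lambda x} = \sum_{\lambda} c_\lambda (e^{i\lambda x} - e^{i\lambda_0 x}) + \big(\sum_\lambda c_\lambda\big) e^{i\lambda_0 x}$ for a fixed reference frequency $\lambda_0 \in F$; the first block is in $J_t$, and the second is $p(0)\, e^{i\lambda_0 x}$. To handle this last term I would instead pick the decomposition relative to the constant monomial: since $x_\infty(p) = 0$ means the coefficient of $e^{i0x}$ is zero, \emph{all} frequencies appearing in $p$ are strictly positive, so every monomial of $p$ can be paired off against a single fixed positive $\lambda_0$ and the leftover coefficient is $\big(\sum c_\lambda\big)e^{i\lambda_0 x}$ — but this equals $p(0) e^{i\lambda_0 x}$ which need not be in $J_t$. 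The resolution is that we do not need it: $p \in I_0$ forces $p(0) = 0$ as well, so $\sum c_\lambda = 0$ and the leftover term vanishes. Hence $p \in J_t$, and by density $I_0 = \overline{\{p \in I_0 : p \text{ trig. poly.}\}} \subseteq J_t$.

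The main obstacle is the middle step — showing $e^{i\alpha x} - e^{i\beta x} \in J_t$ for \emph{arbitrary} distinct positive $\alpha, \beta$, since the generators only directly give the special ratio $\beta = \alpha e^t$. The trick is to combine ideal-multiplication (which translates frequencies additively by arbitrary nonnegative amounts) with the generator relation (which multiplies frequencies by $e^t$), and to check these two operations together act transitively on $(0,\infty)$ in the relevant sense; one then assembles a finite telescoping sum of such elements, each in $J_t$, whose sum is exactly $e^{i\alpha x} - e^{i\beta x}$. I expect this to be a short but slightly fiddly computation with frequencies; everything else (the forward inclusion, closedness of $I_0$, the Bochner–Fejér density) is routine given the earlier results.
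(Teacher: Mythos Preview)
Your approach is correct in spirit and shares its computational core with the paper, but the packaging is different, and there is one small wrinkle worth flagging.

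The paper argues by codimension: since $I_0$ has codimension $2$ in $AAP(\mathbb{R})$, it suffices to show $J_t$ does as well. It introduces the auxiliary space $\tilde J_t=\operatorname{span}\{a+ce^{ix}:a\in J_t,\ c\in\mathbb{C}\}$, shows $\tilde J_t$ is closed by a short direct argument, and then proves $\tilde J_t$ contains every $e^{i\lambda x}$ with $\lambda>0$. The last step is exactly your ``middle step'': multiplying a generator $e^{i\lambda x}-e^{i\lambda e^t x}$ by $e^{i\kappa x}$ and choosing $\kappa+\lambda=1$ gives $e^{i\rho x}\in\tilde J_t$ for $\rho\in[1,e^t)$, and then one propagates to all $\rho>0$ by iterating the $e^{\pm t}$-scaling. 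So both proofs establish the same key fact, namely $e^{i\alpha x}-e^{i\beta x}\in J_t$ for all $\alpha,\beta>0$; you then conclude by density of trigonometric polynomials in $I_0$, whereas the paper concludes by the codimension count. The paper's route buys you a clean endgame with no approximation needed; yours is arguably more transparent about \emph{why} $J_t$ fills up $I_0$, since you exhibit each trigonometric polynomial in $I_0$ explicitly as a finite combination of generators.

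The wrinkle is your density claim. The Bochner--Fej\'er polynomials $\sigma_m(f)$ of $f\in I_0$ automatically satisfy $x_\infty(\sigma_m(f))=0$ (the zeroth Fourier coefficient of $f$ vanishes), but there is no reason for $\sigma_m(f)(0)=0$; for an almost periodic $f$ one does not have $f(0)=\sum_\lambda E_\lambda(f)$ in general. This is easily patched: since $\sigma_m(f)\to f$ uniformly, $\sigma_m(f)(0)\to f(0)=0$, so replacing $\sigma_m(f)$ by $\sigma_m(f)-\sigma_m(f)(0)\,e^{i\lambda_0 x}$ for any fixed $\lambda_0>0$ produces trigonometric polynomials in $I_0$ converging to $f$. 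With that adjustment your argument goes through.
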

\begin{proof}
It is clear that $I_0$ contains $J_t$. To prove the inverse inclusion, note that $I_0$ has codimension 2, and so it suffices to show that
the same holds for $J_t$. Define the subspace $\tilde{J_t}=span\{a+ c e^{ix}: a\in J_t, c\in \mathbb{C}\}$. We claim that $\tilde{J_t}$ is closed.

Let $\{a_n+c_n e^{ix}\}_n$ be a convergent sequence, such that $a_n\in J_t$ and $c_n\in \mathbb{C}$.
We claim that the limit of the sequence, say $a$, lies in $\tilde{J_t}$. Denote by $x_1$ the character of $AAP(\mathbb{R})$ given by the formula 
\begin{equation}\label{charx1ev0}
x_1(f)\mapsto f(0).
\end{equation}
 Hence $a_n(0)+c_n \rightarrow a(0)$. However, since $a_n\in J_t$, it follows that  $a_n(0)=0$, for all $n\in \mathbb{N}$. Therefore 
\begin{align*}
c_n \rightarrow a(0) \,\,\,\, \Rightarrow c_n e^{ix} \rightarrow a(0) e^{ix},\,\,\,\text{ as } n\rightarrow \infty.
\end{align*}
So $a_n=a_n + c_n e^{ix}- c_n e^{ix}\rightarrow a- a(0) e^{ix}$. Since $J_t$ is closed, it contains $a- a(0) e^{ix}$. Hence
\begin{align*}
a= a- a(0) e^{ix} + a(0) e^{ix} \,\, \in \tilde{J_t},
\end{align*}
so $\tilde{J_t}$ is closed.

Hence, it suffices to prove that
\begin{align*} 
\tilde{J_t}=\overline{span\{e^{i\lambda x}: \lambda>0\}}^{\|\cdot\|_\infty}.
\end{align*}
 Since $J_t$ is an ideal in $AAP(\mathbb{R})$, we get
\begin{align*}
e^{i(\kappa+\lambda)x}- e^{i(\kappa+\lambda e^t)x}-e^{ix} \in \tilde{J_t},\text{ for all } \kappa,\lambda\in(0,\infty).
\end{align*}
Choose $\kappa+\lambda=1$, so $e^{i\rho x}\in \tilde{J_t}$  for every $\rho \in [1,e^t)$. Thus, by induction, we have that 
\begin{align*}
e^{i\rho e^{(n-1)t}x}-e^{i\rho e^{nt}x}-e^{i\rho e^{(n-1)t}x}=-e^{i\rho e^{nt}x}\in \tilde{J_t},
\end{align*}
and 
\begin{align*}
e^{i\rho e^{-nt}x}-e^{i\rho e^{-(n-1)t}x}+e^{i\rho e^{-(n-1)t}x}= e^{i\rho e^{-nt}x}\in \tilde{J_t},
\end{align*}
 for every $\rho\in[1,e^t)$ and $n\in \mathbb{N}$. 
Hence $e^{i\lambda x}\in\tilde{J_t}$, for all $\lambda\in(0,\infty)$, and hence the proof is complete.
\end{proof}

\begin{prop}
The commutator ideal $\mathfrak{C}_{ph}^{G^+}$ is equal to the set 
\begin{equation}\label{cphdescrip}
\operatorname{ker}(E_0\circ H_0)\cap \operatorname{ker}(Z_0\circ H_0) \cap \bigcap_{t\in G^+}\left(
\operatorname{ker}(\chi_\infty\circ H_t) \cap \operatorname{ker}(x_1\circ E_0\circ H_t)\cap \operatorname{ker}(x_1\circ Z_0\circ H_t)\right). 
\end{equation}
\end{prop}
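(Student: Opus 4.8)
The plan is to prove the two inclusions in \eqref{cphdescrip} separately; write $\mathfrak{I}$ for the set on the right.

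For $\mathfrak{C}_{ph}^{G^+}\subseteq\mathfrak{I}$ I would exhibit each defining map of $\mathfrak{I}$ as (part of) a homomorphism into a commutative algebra. By the analogue of Corollary \ref{contrexpmult} for $(B_p,G,v)$, $H_0$ restricts to a contractive homomorphism of $A_{ph}^{G^+}=A_p\rtimes_v G^+$ onto $A_p$, and $E_0,Z_0$ are contractive homomorphisms of $A_p$ onto the commutative algebra $AAP(\mathbb{R})$; hence $\ker(E_0\circ H_0)$ and $\ker(Z_0\circ H_0)$ are closed ideals with commutative quotient and contain $\mathfrak{C}_{ph}^{G^+}$. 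For the remaining maps I would compute on trigonometric polynomials, using $H_t(XY)=\sum_{s+s'=t}H_s(X)\,v_s(H_{s'}(Y))$, the multiplicativity of the characters $\chi_\infty$ and $x_1$, and the invariance of $\chi_\infty,x_1$ under the Arens automorphisms $\phi_{0,k}$, to get the convolution identities
\[
\varphi_t(XY)=\sum_{s+s'=t}\varphi_s(X)\,\varphi_{s'}(Y),\qquad \varphi_t\in\{\chi_\infty H_t,\ x_1E_0H_t,\ x_1Z_0H_t\}.
\]
Because $G^+$ is abelian the right side is symmetric in $X$ and $Y$, so (iterating to three factors) each $\varphi_t$ kills every $a[X,Y]b$, hence all of $\mathfrak{C}_{ph}^{G^+}$.

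For the reverse inclusion, the first step is to reduce to polynomials. Given $X\in\mathfrak{I}$, I would apply to $B_p\times_v G$ the Bochner--Fejer operators of Theorem \ref{Cesaro covergence} (when $G=\mathbb{R}_d$), respectively classical Fejer means (when $G=\mathbb{Z}$), \emph{in the dilation variable only}: these are contractions, $\sigma_N(X)\to X$ in norm, and $\sigma_N(X)=\sum_{t\in F_N}\kappa_{N,t}H_t(X)V_t$ with $F_N$ a finite subset of $G^+$, $\kappa_{N,t}$ scalars, and $\kappa_{N,t}H_t(X)\in A_p$, so $\sigma_N(X)\in A_{ph}^{G^+}$. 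Since each defining map of $\mathfrak{I}$ only rescales the $t$-th coefficient by $\kappa_{N,t}$, also $\sigma_N(X)\in\mathfrak{I}$; as $\mathfrak{C}_{ph}^{G^+}$ is closed it then suffices to treat $X=\sum_{t\in F}X_tV_t$ with $F\subset\subset G^+$ and $X_t\in A_p$.

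For such an $X$ I would handle each $t$ in turn. At $t=0$ the hypotheses give $E_0(X_0)=Z_0(X_0)=0$, so $X_0\in\mathfrak{C}_p$ by Lemma \ref{comideapbaby}, and $\mathfrak{C}_p\subseteq\mathfrak{C}_{ph}^{G^+}$ because $\mathfrak{C}_{ph}^{G^+}\cap A_p$ is a closed ideal of $A_p$ with commutative quotient. For $t>0$, the lemma computing $A_p/\mathfrak{C}_p$ gives $X_t\equiv E_0(X_t)+Z_0(X_t)-Z_0(E_0(X_t))\pmod{\mathfrak{C}_p}$, and $Z_0(E_0(X_t))$ is the scalar $\chi_\infty(X_t)=0$ times $I$; hence $X_tV_t\equiv M_fV_t+D_gV_t\pmod{\mathfrak{C}_{ph}^{G^+}}$ where $f=E_0(X_t)$ and $g=Z_0(X_t)$ are viewed in $AAP(\mathbb{R})$. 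The three conditions at level $t$ say exactly that $x_\infty(f)=\chi_\infty(X_t)=0$, $x_1(f)=0$ and $x_\infty(g)=x_1(g)=0$, i.e. $f,g\in I_0$. By the lemma immediately preceding the Proposition, $I_0=J_t$, and each generator $e^{i\lambda x}-e^{i\lambda e^t x}$ of $J_t$ satisfies $(M_\lambda-M_{e^t\lambda})V_t=[M_\lambda,V_t]\in\mathfrak{C}_{ph}^{G^+}$; symmetrically, a substitution shows the closed ideal of $AAP(\mathbb{R})$ generated by the functions $e^{i\mu x}-e^{i\mu e^{-t}x}$ arising from $[D_\mu,V_t]=(D_\mu-D_{e^{-t}\mu})V_t$ is again $J_t$. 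Passing to the norm closure yields $M_fV_t,D_gV_t\in\mathfrak{C}_{ph}^{G^+}$, hence $X_tV_t\in\mathfrak{C}_{ph}^{G^+}$, and summing over $t\in F$ finishes the argument. I expect the crux to be exactly this last step, $f\in I_0\Rightarrow M_fV_t\in\mathfrak{C}_{ph}^{G^+}$: the commutators $[M_\lambda,V_t]$ generate only the ideal $J_t$, so one needs the identification $J_t=I_0$ and the fact that $I_0$ has codimension two, detected precisely by $x_1$ and $x_\infty$ — this is what makes $\chi_\infty H_t$, $x_1E_0H_t$, $x_1Z_0H_t$ the right obstructions for each $t>0$. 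A secondary subtlety is that the Bochner--Fejer reduction must be performed in the dilation variable only, since averaging in $\lambda$ or $\mu$ would change $x_1E_0H_t(X)$ and $x_1Z_0H_t(X)$, which is why the reduction stops at polynomials $\sum_t X_tV_t$ with $X_t\in A_p$ rather than at trigonometric polynomials.
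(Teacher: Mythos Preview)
Your proposal is correct and follows essentially the same route as the paper's proof: both directions use the Bochner--Fejer reduction in the dilation variable (Theorem \ref{Cesaro covergence}) to reduce to showing $H_t(X)V_t\in\mathfrak{C}_{ph}^{G^+}$ for each $t$, then decompose $H_t(X)$ modulo $\mathfrak{C}_p$ via $E_0,Z_0$ and invoke the lemma $J_t=I_0$ to recognise $M_fV_t$ and $D_gV_t$ as limits of commutators. Your treatment of the forward inclusion via the convolution identities $\varphi_t(XY)=\sum_{s+s'=t}\varphi_s(X)\varphi_{s'}(Y)$ (using $\chi_\infty\circ v_s=\chi_\infty$ and $x_1\circ\phi_{0,e^{\pm s}}=x_1$) is a more explicit version of what the paper dismisses as ``elementary to show on trigonometric polynomials,'' and your separation of $t=0$ from $t>0$ is cosmetic since for $t=0$ the extra conditions are implied by $E_0(H_0(X))=Z_0(H_0(X))=0$.
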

\begin{proof}
Let $I$ be the set described in \eqref{cphdescrip}. Since $I$ is the intersection of kernels of bounded linear operators, it is closed.
One can check that if $X=  \sum\limits_{\substack{\lambda,\mu, t\in F\\F\text{ finite}}} c_{\lambda,\mu, t} M_\lambda D_\mu V_t$ is a trigonometric polynomial in $I$, then it satisfies
\begin{enumerate}
	\item 
	$c_{\lambda,0,0}=c_{0,\mu,0}=c_{0,0,t}=0, \text{ for all } \lambda,\mu\in\mathbb{R}^+, t\in G^+$;
	\item 
	$\sum\limits_{\lambda}c_{\lambda,0,t}=\sum\limits_{\mu}c_{0,\mu,t}=0, \text{ for all } t\in G^+$.
	\end{enumerate}
 It is elementary to show that if $X,Y$ trigonometric polynomials in $A_{ph}^{G^+}$, then  $XY-YX\in I$. Since multiplication is jointly continuous with respect to the operator norm, it follows by the density of trigonometric polynomials in $A_{ph}^{G^+}$ that $XY-YX\in I$, for every $X,Y\in A_{ph}^{G^+}$. Similarly, working first with trigonometric polynomials, we obtain that $I$ is closed under the ideal operations.

For the converse inclusion, let $X\in I$. By Theorem \ref{Cesaro covergence}, it suffices to show that $H_t(X)V_t\in \mathfrak{C}_{ph}^{G^+}$, for every $t\in G^+$. As we proved in Lemma \ref{comideapbaby}, one can check that 
\begin{align*}
H_t(X) - E_0(H_t(X))-Z_0(H_t(X)) + (E_0\circ Z_0)(H_t(X))\in \mathfrak{C}_{ph}^{G^+},
\end{align*}
since it lies in $\mathfrak{C}_{p}$. Moreover, we obtain by the definition of $I$ that $(E_0\circ Z_0)(H_t(X))=(\chi_\infty \circ H_t)(X)=0$, so it follows that 
\begin{align*}
H_t(X)V_t - E_0(H_t(X))V_t-Z_0(H_t(X))V_t\in \mathfrak{C}_{ph}^{G^+}.
\end{align*}
Hence it suffices to show that $E_0(H_t(X))V_t$ and $Z_0(H_t(X))V_t$ lie in $\mathfrak{C}_{ph}^{G^+}$.

Write $ E_0(H_t(X))V_t= M_f V_t$, for some $f\in AAP(\mathbb{R})$. Since $X\in I$, $f$ satisfies the properties $x_\infty(f)=f(0)=0$. So by the previous lemma there exist $g_n\in AAP(R),\, n\in\mathbb{N}$,  such that $f=\lim\limits_{n}(g_n - \phi_{0,e^t}(g_n))$, which implies that $M_f V_t=\lim\limits_n(M_{g_n} V_t -V_t M_{g_n})$, so $M_f V_t$ lies in $\mathfrak{C}_{ph}^{G^+}$. Similarly every element $D_f V_t\in I$ belongs to $\mathfrak{C}_{ph}^{G^+}$, so our proof is complete.
\end{proof}

%{charx1ev0}

Before this subsection ends, we prove the existence of two more contractive maps, which will be helpful in the next section.
\begin{prop}\label{expect2}
The maps
\begin{align*}
  \sum_{\substack{\lambda,\mu, t\in F\\F\text{ finite}}} c_{\lambda,\mu, t} M_\lambda D_\mu V_t\mapsto 
   \sum_{\substack{\lambda,t\in F\\F\text{ finite}}} c_{\lambda,0,t} V_t\\
   \sum_{\substack{\lambda,\mu, t\in F\\F\text{ finite}}} c_{\lambda,\mu, t} M_\lambda D_\mu V_t\mapsto 
   \sum_{\substack{\mu, t\in F\\F\text{ finite}}} c_{0,\mu,t} V_t
   \end{align*}
 are contractive.
\end{prop}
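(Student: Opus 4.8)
The plan is to realise both maps as weak-operator limits of the inner automorphisms implemented by the dilation unitaries, in the same spirit as the proof of Proposition~\ref{expect}; the point is that here it is the scaling itself that produces the averaging, so that, in contrast with Proposition~\ref{expect}, no recurrence argument is needed.

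I would begin with the first map, call it $\Psi_1$, on a generalized trigonometric polynomial $X=\sum_{\lambda,\mu,t\in F}c_{\lambda,\mu,t}M_\lambda D_\mu V_t$ in $B_{ph}^G$. Fix a sequence $s_n\to+\infty$ and conjugate $X$ by the unitaries $V_{s_n}$. Since conjugation by a unitary is a $\ast$-homomorphism, the relations $V_tM_\lambda V_t^\ast=M_{e^t\lambda}$, $V_tD_\mu V_t^\ast=D_{e^{-t}\mu}$ and the fact that the dilations commute give
\begin{align*}
V_{s_n}^\ast X V_{s_n}=\sum_{\lambda,\mu,t\in F}c_{\lambda,\mu,t}\,M_{e^{-s_n}\lambda}\,D_{e^{s_n}\mu}\,V_t.
\end{align*}
As $n\to\infty$ one has $M_{e^{-s_n}\lambda}\to I$ in the strong operator topology, because $e^{-s_n}\lambda\to0$ and $\varepsilon\mapsto M_\varepsilon$ is strongly continuous at $0$; one has $D_{e^{s_n}\mu}\stackrel{\textrm{WOT}}{\rightarrow}0$ whenever $\mu\neq0$, since $|e^{s_n}\mu|\to\infty$ and $s\mapsto\langle D_sf,g\rangle$ is the Fourier transform of the $L^1$ function $\widehat f\,\overline{\widehat g}$ (Riemann--Lebesgue); and $D_{e^{s_n}\mu}=I$ when $\mu=0$. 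Because both $M_{e^{-s_n}\lambda}$ and its adjoint $M_{-e^{-s_n}\lambda}$ converge strongly to $I$, one may pass to the weak-operator limit in each product $M_{e^{-s_n}\lambda}D_{e^{s_n}\mu}V_t$, and summing over the finite set $F$ yields
\begin{align*}
V_{s_n}^\ast X V_{s_n}\stackrel{\textrm{WOT}}{\rightarrow}\sum_{\lambda,t\in F}c_{\lambda,0,t}\,V_t=\Psi_1(X).
\end{align*}
Since each $V_{s_n}$ is unitary, $\|V_{s_n}^\ast X V_{s_n}\|=\|X\|$, so for all $f,g\in L^2(\mathbb{R})$ we obtain $|\langle\Psi_1(X)f,g\rangle|=\lim_n|\langle V_{s_n}^\ast X V_{s_n}f,g\rangle|\le\|X\|\,\|f\|\,\|g\|$, hence $\|\Psi_1(X)\|\le\|X\|$. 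Thus $\Psi_1$ is contractive on the generalized trigonometric polynomials, which are dense in $B_{ph}^G$, and so extends to a contraction on $B_{ph}^G$.

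For the second map $\Psi_2$ the argument is identical except that one takes $s_n\to-\infty$: then $D_{e^{s_n}\mu}\to I$ strongly (as $e^{s_n}\mu\to0$), while $M_{e^{-s_n}\lambda}\stackrel{\textrm{WOT}}{\rightarrow}0$ for $\lambda\neq0$ (as $|e^{-s_n}\lambda|\to\infty$, the Riemann--Lebesgue lemma now applied to $s\mapsto\langle M_sf,g\rangle=\widehat{f\bar g}(-s)$) and $M_{e^{-s_n}\lambda}=I$ when $\lambda=0$; here the strongly convergent factor sits in the middle of the product, so once more the weak-operator limit exists, giving $V_{s_n}^\ast X V_{s_n}\stackrel{\textrm{WOT}}{\rightarrow}\sum_{\mu,t\in F}c_{0,\mu,t}V_t=\Psi_2(X)$, and the same norm estimate applies.

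The only subtle point, and the one I would take a little care over, is this passage to the weak-operator limit of a product of three operators, since multiplication is not jointly weak-operator continuous; it is handled exactly as indicated above, by observing that in each case the factor converging only weakly is adjacent to a factor (together with its adjoint) that converges strongly to the identity. I note in passing that one can instead argue abstractly: $x_1\circ E_0$ and $x_1\circ Z_0$ are $v$-invariant conditional expectations of $B_p$ onto $\mathbb{C}\cdot 1$, hence extend to conditional expectations of $B_p\times_v G$ onto $C^\ast(\{V_t:t\in G\})$, and under the identification $B_{ph}^G\cong B_p\times_v G$ these are precisely $\Psi_1$ and $\Psi_2$; but the dilation argument above is shorter, self-contained, and matches the style of the preceding proofs.
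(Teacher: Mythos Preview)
Your argument is correct and is essentially the same approach as the paper's own proof: the paper simply records the two WOT limits $V_n^\ast X V_n\stackrel{\textrm{WOT}}{\rightarrow}\sum_{\lambda,t}c_{\lambda,0,t}V_t$ and $V_n X V_n^\ast\stackrel{\textrm{WOT}}{\rightarrow}\sum_{\mu,t}c_{0,\mu,t}V_t$ and refers back to Proposition~\ref{expect}, which is exactly your conjugation by $V_{s_n}$ with $s_n\to+\infty$ and $s_n\to-\infty$ respectively. Your write-up supplies the details the paper omits (the Riemann--Lebesgue justification and the care with passing WOT limits through the three-factor product), and your closing remark about the abstract conditional-expectation viewpoint is a pleasant addition not present in the paper.
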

\begin{proof}
The proof uses similar arguments as in  Proposition \ref{expect}, working now with the WOT-limits
\begin{align*}
\sum_{\substack{\lambda,\mu, t\in F\\F\text{ finite}}} c_{\lambda,\mu, t} V_n^\ast M_\lambda D_\mu V_t V_n\stackrel{\textrm{WOT}}{\rightarrow} 
 \sum_{\substack{\lambda, t\in F\\F\text{ finite}}} c_{\lambda,0,t} V_t \\
\sum_{\substack{\lambda,\mu, t\in F\\F\text{ finite}}} c_{\lambda,\mu, t} V_n M_\lambda D_\mu V_t V_n^\ast\stackrel{\textrm{WOT}}{\rightarrow} 
 \sum_{\substack{\mu, t\in F\\F\text{ finite}}} c_{0,\mu,t} V_t,
\end{align*}
as $n\rightarrow \infty$.
\end{proof}

\subsection{The algebra $A_{ph}^{\mathbb{Z}^+}$}
We focus now on the partly discrete triple semigroup algebra $A_{ph}^{\mathbb{Z}^+}$. In order to determine the isometric automorphisms of $A_{ph}^{\mathbb{Z}^+}$, we work again on the induced homeomorphism of the character space onto itself. Define the characters $x_1\in\mathfrak{M}(AAP(\mathbb{R}))$, such that $x_1(f)=f(0)$, and $\chi_\infty=(x_\infty,x_\infty)$ as before. Let also 
$y_0$ be the character in the disc algebra   $A(\mathbb{D})$ (see \cite{hof}), given by  $y_0(f)=f(0)$. 
\begin{prop}
The mapping 
\begin{align*}
\psi:\mathfrak{M}(A_{ph}^{\mathbb{Z}^+})\rightarrow \mathfrak{M}(AAP_1)\times \mathfrak{M}(AAP_2)\times\mathfrak{M}(A(\mathbb{D})):\chi\mapsto (\chi\big|_{AAP_1},\chi\big|_{AAP_2},\chi\big|_{A(\mathbb{D})})
\end{align*}
is continuous into the subset
\begin{align*}
(\mathfrak{M}(AAP_1) \times \{x_\infty\}\times \{y_0\})&\cup (\{x_\infty\}\times \mathfrak{M}(AAP_2)\times \{y_0\})\cup\\ 
\cup(\{x_1\}\times\{x_\infty\}\times \mathfrak{M}(A(\mathbb{D})))
\cup (\{x_\infty\}\times\{x_1\}\times &\mathfrak{M}(A(\mathbb{D})))\cup (\{x_\infty\}\times\{x_\infty\}\times \mathfrak{M}(A(\mathbb{D}))).
\end{align*}
\end{prop}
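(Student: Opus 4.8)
The plan is to separate the two assertions of the proposition. Continuity of $\psi$ is immediate: each coordinate of $\psi$ is a restriction map $\chi\mapsto\chi|_{AAP_1}$, $\chi\mapsto\chi|_{AAP_2}$, $\chi\mapsto\chi|_{A(\mathbb{D})}$, and restriction is continuous from the Gelfand topology on $\mathfrak{M}(A_{ph}^{\mathbb{Z}^+})$ to that on a subalgebra, so $\psi$ is continuous into the product. The substance is the image description, and for that I will use that a character $\chi\in\mathfrak{M}(A_{ph}^{\mathbb{Z}^+})$ is completely determined by the scalars $\chi(M_\lambda),\chi(D_\mu),\chi(V_t)$, together with the recalled structure of $\mathfrak{M}(AAP(\mathbb{R}))$: identifying $AAP_1$ (resp.\ $AAP_2$) with $AAP(\mathbb{R})$, the restriction $\chi|_{AAP_1}$ is either $x_\infty$, equivalently $\chi(M_\lambda)=0$ for all $\lambda>0$, or it is represented by $e^{i\lambda x}\mapsto c(\lambda)e^{-r\lambda}$ for some $c\in\mathbb{R}_B$ and $r\ge 0$, equivalently $\chi(M_\lambda)\ne 0$ for every $\lambda$ (this dichotomy also follows directly from multiplicativity via $\chi(M_\lambda)^n=\chi(M_{n\lambda})$). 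Likewise for $\chi|_{AAP_2}$, while $\chi|_{A(\mathbb{D})}$ is evaluation at $\chi(V_1)\in\overline{\mathbb{D}}$, which is the character $y_0$ precisely when $\chi(V_1)=0$.

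The first step is the Weyl relation $M_\lambda D_\mu=e^{i\lambda\mu}D_\mu M_\lambda$: applying $\chi$ gives $(1-e^{i\lambda\mu})\chi(M_\lambda)\chi(D_\mu)=0$, so if $\chi|_{AAP_1}\ne x_\infty$ and $\chi|_{AAP_2}\ne x_\infty$ then choosing $\lambda,\mu>0$ with $\lambda\mu\notin 2\pi\mathbb{Z}$ produces a contradiction. Hence at least one of $\chi|_{AAP_1},\chi|_{AAP_2}$ equals $x_\infty$, which is the feature shared by all five listed sets; it then remains to pin down the $A(\mathbb{D})$-coordinate and, when it is not $y_0$, to show that the other restriction, if not $x_\infty$, must be $x_1$.

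The second step splits on whether $\chi(V_1)=0$. If it is, then $\chi(V_t)=\chi(V_1)^t=0$ for all $t\ge 1$, so $\chi|_{A(\mathbb{D})}=y_0$, and combined with the first step this places $\psi(\chi)$ in the first or the second listed set. If $\chi(V_1)\ne 0$, I will use the dilation relations $V_1M_\lambda=M_{e\lambda}V_1$ and $V_1D_\mu=D_{e^{-1}\mu}V_1$: applying $\chi$ and cancelling $\chi(V_1)$ gives $\chi(M_\lambda)=\chi(M_{e\lambda})$ and $\chi(D_\mu)=\chi(D_{e^{-1}\mu})$ for all $\lambda,\mu\ge 0$. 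Assuming $\chi|_{AAP_1}\ne x_\infty$ and writing $\chi(M_\lambda)=c(\lambda)e^{-r\lambda}$, taking moduli in $\chi(M_\lambda)=\chi(M_{e\lambda})$ forces $r=0$, and then $c(\lambda)=c(e\lambda)$, i.e.\ $c((e-1)\lambda)=1$ for all $\lambda$; since $(e-1)\mathbb{R}=\mathbb{R}$ this forces $c\equiv 1$, so $\chi|_{AAP_1}=x_1$. The symmetric argument shows $\chi|_{AAP_2}\ne x_\infty$ forces $\chi|_{AAP_2}=x_1$. By the first step at most one of the two can be $x_1$, so $(\chi|_{AAP_1},\chi|_{AAP_2})$ is $(x_1,x_\infty)$, $(x_\infty,x_1)$, or $(x_\infty,x_\infty)$, placing $\psi(\chi)$ in the third, fourth, or fifth listed set. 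These cases together exhaust $\mathfrak{M}(A_{ph}^{\mathbb{Z}^+})$.

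I expect the main obstacle to be exactly the implication ``$\chi(V_1)\ne 0$ and $\chi|_{AAP_1}\ne x_\infty$ $\Rightarrow$ $\chi|_{AAP_1}=x_1$'': one must use the full classification of the non-$x_\infty$ characters of $AAP(\mathbb{R})$ to discard the decaying ones ($r>0$) and reduce to a group character $c\in\mathbb{R}_B$, and then exploit that $\lambda\mapsto e\lambda$ has no nonzero fixed point, so the constraint $c(\lambda)=c(e\lambda)$ collapses $c$ to the trivial character. Everything else --- the continuity, the Weyl dichotomy, and the combinatorial bookkeeping of which of the five sets is met --- is routine.
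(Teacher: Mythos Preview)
Your proposal is correct and follows essentially the same approach as the paper: both arguments rest on applying the commutation relations to a character $\chi$ to constrain the triple of restrictions, with continuity dismissed as routine. The only organizational difference is that the paper first invokes the earlier identification of $\mathfrak{M}(A_p)$ with $\Delta_1\sqcup_{\chi_\infty}\Delta_2$ and then argues (tersely) that if $\chi|_{A_p}$ is not one of the three distinguished points $\chi_\infty,(x_1,x_\infty),(x_\infty,x_1)$ then $\chi(V_t)=0$, whereas you redo the Weyl step inline and, in the case $\chi(V_1)\neq 0$, spell out the computation using the explicit form $c(\lambda)e^{-r\lambda}$ for non-$x_\infty$ characters of $AAP(\mathbb{R})$ to force $r=0$ and $c\equiv 1$. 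Your version is more self-contained and fills in exactly the detail the paper leaves to the reader under ``by the commutation relations''.
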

\begin{proof}
Let  $\chi$ be a character in $\mathfrak{M}(A_{ph}^{\mathbb{Z}^+})$. Then
\begin{align*} 
\chi\big|_{A_p}\in\mathfrak{M}(A_p)\,\,\text{ and }\,\,
\chi\big|_{\|\cdot\|\textrm{-alg}\{V_t:t\in\mathbb{Z}^+\}}\in\mathfrak{M}(A(\mathbb{D})).
\end{align*}
One can check that if $\chi\big|_{A_p}$ does not correspond to a point $\{\chi_\infty,(x_1,x_\infty),(x_\infty,x_1)\}$, then by the commutation relations we get that $\chi(V_t)=0$, for all positive $t$.  

On the other hand, if $\chi\big|_{\|\cdot\|\textrm{-alg}\{V_t:t\in\mathbb{Z}^+\}}\neq 0$, then by the commutation relations we have three cases for $\chi\big|_{A_p}\in\mathfrak{M}(A_p)$:
\begin{enumerate}
	\item $\chi(M_\lambda)=1$ and $\chi(D_\mu)=0$, which corresponds to the character $(x_1,x_\infty)$ in  $\mathfrak{M}(A_p)$.
	\item $\chi(M_\lambda)=0$ and $\chi(D_\mu)=1$, so we get the character $(x_\infty,x_1)$.
	\item $\chi(M_\lambda)=\chi(D_\mu)=0$, which gives $\chi_\infty$.
\end{enumerate}
Hence the mapping $\psi$ is well defined. Continuity is evident, so the proof is complete.
\end{proof}

Note that every element in the codomain of $\psi$ corresponds to a multiplicative linear functional defined on the non-closed algebra of trigonometric generalized polynomials.
Write once again $\Delta_1$, $\Delta_2$ for the sets $\mathfrak{M}(AAP_1) \times \{x_\infty\}\times \{y_0\}$ and  $\{x_\infty\}\times \mathfrak{M}(AAP_2)\times \{y_0\}$ respectively.
If $\chi$ is such a multiplicative functional, then the contraction $H_0$ yield that $\chi$ is bounded and extends to a character of $A_{ph}^{\mathbb{Z}^+}$. Therefore, any maximal ideal of $A_p$ corresponding to a point $(\Delta_1\sqcup_{\chi_\infty}\Delta_2)\backslash\{\chi_\infty,(x_1,x_\infty),(x_\infty,x_1)\}$ is contained in a unique maximal ideal in $A_{ph}^{\mathbb{Z}^+}$.
Similarly, by  Lemma \ref{expect2} any multiplicative functional of the form $(x_1,x_\infty,y), (x_\infty,x_1,y)$, with $y\in \mathfrak{M}(A(\mathbb{D}))$, is bounded. 
 We denote by $\Delta_3$ the sets of characters that give $\chi(M_\lambda)=1$, for all $\lambda$, and 
by  $\Delta_4$ the characters that satisfy $\chi(D_\mu)=1$, for all $\mu$.

The pursuit of the continuity of the remaining multiplicative functionals (on the dense subalgebra) that correnspond to the points $(x_\infty,x_\infty,y)$, we write $\Delta_0$,  is more subtle and it remains unclear to the author if this formula can generate a bounded character of $A_{ph}^{\mathbb{Z}^+}$. 
\begin{rem}
It is trivial to show that given an element $u$ of the commutator ideal of a commutative Banach algebra $A$, then $\chi(u)=0$ for every character $\chi$ of $A$. The opposite direction is not true in the case that $A$ contains quasinilpotent elements. A complication with $A_{ph}^{\mathbb{Z}^+}$ is that we cannot determine if the elements of the form $V_t-M_\lambda V_t -D_\mu V_t + \mathfrak{C}_{ph}^{\mathbb{Z}^+}, \lambda,\mu\in \mathbb{R}^+, t\in \mathbb{Z}^+$ are quasinilpotent, a property which turns out to be equivalent to the continuity of specific elements in $\Delta_0$. 
\end{rem}

We now obtain a partial identification of the character space of 
$A_{ph}^{\mathbb{Z}^+}$, which is sufficient for our main results in the next section.
See Figure \ref{topsp}.
%\begin{tikzpicture}
%\draw[red,thick] (2,2) circle (2cm) node[yshift=-2.4cm, xshift=0.4cm] {$\Delta_3$};
%\draw[red,thick] (6,2) circle (2cm);
%\draw[blue,thick] (0,2) circle (2cm);
%\draw[blue,thick] (4,2) circle (2cm);
%\draw[blue,thick] (8,2) circle (2cm);

%\end{tikzpicture}

%\tdplotsetmaincoords{60}{110}
\begin{figure}[h!]
\begin{center}
%define polar coordinates for some vector
%TODO: look into using 3d spherical coordinate system
\pgfmathsetmacro{\radius}{1}
\pgfmathsetmacro{\thetavec}{0}
\pgfmathsetmacro{\phivec}{0}

%start tikz picture, and use the tdplot_main_coords style to implement the display 
%coordinate transformation provided by 3dplot

\begin{tikzpicture}[scale=2]
\draw[dashed] (\radius,0,0) arc (-90:90:\radius);
\shade[ball color=blue!10!white,opacity=0.2] (1cm,2cm) arc (90:270:5mm and 1cm) arc (-90:90:1cm and 1cm);
\draw (1,1) ellipse (0.5cm and 1cm);
\draw (2, 1, 0) node [circle, fill=blue, inner sep=.05cm] () {};
\draw (0.5, 1, 0) node [circle, fill=blue, inner sep=.05cm] () {};
\draw (3.5, 1, 0) node [circle, fill=blue, inner sep=.05cm] () {};
\node [below left] at (0.3,1) {$\Delta_3$};
\node [below left] at (1.2,1) {$\Delta_1$};
\node [below left] at (2.1,1) {$\Delta_0$};
\node [below left] at (3,1) {$\Delta_2$};
\node [below left] at (4.1,1) {$\Delta_4$};
\draw[black, thick, dashed,fill=yellow, fill opacity=0.2] (0.5,1) ellipse (0.3cm and 1cm);
\draw[black, thick, dashed,fill=yellow, fill opacity=0.2] (2,1) ellipse (0.3cm and 1cm);
\draw[black, thick, dashed,fill=yellow, fill opacity=0.2] (3.5,1) ellipse (0.3cm and 1cm);
\draw[dashed] (3,2) arc (90:270:\radius);
\shade[ball color=blue!10!white,opacity=0.2] (3cm,0cm) arc (270:90:1cm and 1cm) arc (90:-90:5mm and 1cm);
\draw[dashed] (3,0) arc (270:90:0.5cm and 1cm);
\draw (3,0) arc (-90:90:0.5cm and 1cm);
\end{tikzpicture}
\end{center}
\caption{The topological space $\Delta_0 \sqcup \Delta_1\sqcup \Delta_2 \sqcup \Delta_3 \sqcup \Delta_4$.}
\label{topsp}
\end{figure}
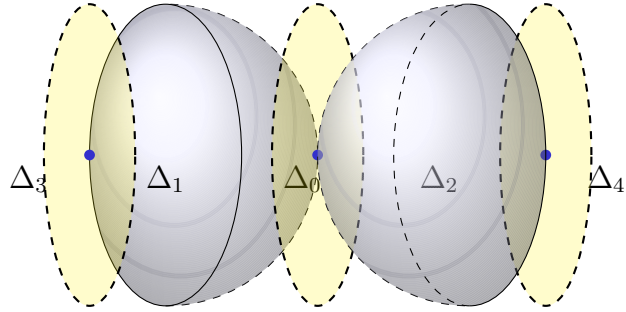

%\draw (0, 0, 1) node [circle, fill=green, inner sep=.02cm] () {};
%\draw (-1, 0, 0) node [circle, fill=red, inner sep=.02cm] () {};

\begin{prop}\label{homot}
The character space $\mathfrak{M}(A_{ph}^{\mathbb{Z}^+})$ has the form  $\tilde{\Delta}_0 \sqcup \Delta_1\sqcup \Delta_2 \sqcup \Delta_3 \sqcup \Delta_4$, where $\tilde{\Delta}_0$ is either the point $\{x_\infty,x_\infty, y_0\}$ or a closed disc in $\Delta_0$.
\end{prop}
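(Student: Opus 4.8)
The plan is to build on the preceding proposition, which already embeds $\mathfrak{M}(A_{ph}^{\mathbb{Z}^+})$ via $\psi$ into the stated union of five pieces and shows, using the contraction $H_0$ and Proposition \ref{expect2}, that $\Delta_1,\Delta_2,\Delta_3,\Delta_4$ (and the three exceptional points $\chi_\infty$, $(x_1,x_\infty)$, $(x_\infty,x_1)$ of $\mathfrak{M}(A_p)$) are genuinely carried by bounded characters. So the only thing left to pin down is the ``middle slice'' $\Delta_0$: exactly which multiplicative functionals lying over $(x_\infty,x_\infty)\in\mathfrak{M}(A_p)$ extend to characters of $A_{ph}^{\mathbb{Z}^+}$. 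First I would note that such a $\chi$ restricts on $\|\cdot\|\textrm{-alg}\{V_t:t\in\mathbb{Z}^+\}\cong A(\mathbb{D})$ to evaluation at some $w\in\overline{\mathbb{D}}$, and since $\chi(M_\lambda)=\chi(D_\mu)=0$ for positive $\lambda,\mu$ by the Weyl relations, multiplicativity forces $\chi$ to act on a trigonometric polynomial $X=\sum c_{\lambda,\mu,t}M_\lambda D_\mu V_t$ by
\[
\chi_w(X)=\sum_t c_{0,0,t}\,w^t .
\]
Hence $\tilde\Delta_0$ is homeomorphic, via $\psi$ (using compactness of $\mathfrak{M}(A_{ph}^{\mathbb{Z}^+})$ and that $\chi_w\mapsto w=\chi_w(V_1)$ is a continuous injection), to the set $S=\{w\in\overline{\mathbb{D}}:\chi_w\text{ is bounded on }A_{ph}^{\mathbb{Z}^+}\}$, and the whole task reduces to showing $S$ is a closed disc about $0$ (possibly degenerate to $\{0\}$).

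Then I would verify three properties of $S$. First, $0\in S$, because $\chi_0=\chi_\infty\circ H_0$ is a composition of contractions; so $S\neq\emptyset$. Second, $S$ is closed, because for each fixed trigonometric polynomial $X$ the map $w\mapsto\chi_w(X)$ is a polynomial, so $S=\bigcap_X\{w:|\chi_w(X)|\leq\|X\|\}$ is an intersection of closed sets. Third, $S$ is invariant under the circle group: the gauge automorphism $\phi_\zeta$ ($\zeta\in\mathbb{T}$) of the $C^\ast$-algebra $B_p\times_v\mathbb{Z}$, induced as in Section 2.2 by the covariant representation whose unitary part is $t\mapsto\zeta^t(\epsilon_t\otimes 1)$, fixes $M_\lambda,D_\mu$ and sends $V_t\mapsto\zeta^tV_t$, hence restricts to an isometric automorphism of $A_{ph}^{\mathbb{Z}^+}$, and $\chi_w\circ\phi_\zeta=\chi_{\zeta w}$, so $w\in S\iff\zeta w\in S$.

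The crux — and what I expect to be the main obstacle — is showing $S$ is radially contractible to $0$. For $0\leq r<1$ I would introduce the averaging map $\rho_r(X)=\int_0^{2\pi}\phi_{e^{i\theta}}(X)\,P_r(e^{i\theta})\,\frac{d\theta}{2\pi}$, where $P_r$ is the Poisson kernel for the disc, following the $\Phi_T$ construction of Section 2.2: norm-continuity of $\theta\mapsto\phi_{e^{i\theta}}(X)$ (checked on trigonometric polynomials and extended using the uniform bound $\|\phi_{e^{i\theta}}\|=1$) makes the Bochner integral well defined and land inside the closed nonselfadjoint algebra $A_{ph}^{\mathbb{Z}^+}$; $\|\rho_r\|\leq1$ since $\rho_r$ averages isometries against a probability measure; and $\rho_r(M_\lambda D_\mu V_t)=r^tM_\lambda D_\mu V_t$ since $\int_0^{2\pi}e^{it\theta}P_r(e^{i\theta})\frac{d\theta}{2\pi}=r^t$ for $t\geq0$. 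Then $\chi_w\circ\rho_r=\chi_{rw}$, so $w\in S\Rightarrow rw\in S$.

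Combining these facts, $S$ is a closed, circle-invariant subset of $\overline{\mathbb{D}}$ containing $0$ with $w\in S\Rightarrow rw\in S$ for all $r\in[0,1]$, hence $S=\{w:|w|\leq r_0\}$ with $r_0=\max_{w\in S}|w|$; so $\tilde\Delta_0\cong S$ is either the point $\{x_\infty,x_\infty,y_0\}$ (when $r_0=0$) or a closed disc inside $\Delta_0$ (when $r_0>0$). Together with the four previously identified pieces and the fact that $\psi$ maps $\mathfrak{M}(A_{ph}^{\mathbb{Z}^+})$ into their union, this yields $\mathfrak{M}(A_{ph}^{\mathbb{Z}^+})=\tilde\Delta_0\sqcup\Delta_1\sqcup\Delta_2\sqcup\Delta_3\sqcup\Delta_4$. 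The secondary technical care needed is exactly in making the $\rho_r$ construction rigorous for a nonselfadjoint algebra and in checking that $\phi_\zeta$ restricts isometrically to $A_{ph}^{\mathbb{Z}^+}$; both are minor variations on machinery already developed in the paper, which is why the rotation step comes essentially for free.
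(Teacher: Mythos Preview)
Your argument is correct and follows the same overall architecture as the paper: reduce to describing the set $S\subset\overline{\mathbb{D}}$ of $w$ for which $\chi_w$ is bounded, then use the gauge automorphisms $\phi_{e^{i\theta}}$ of $A_p\rtimes_v\mathbb{Z}^+$ to get rotation invariance. The one genuine difference is in the radial step. The paper does not build your Poisson--averaging contraction $\rho_r$; instead, once it has the inequality
\[
\Bigl|\sum_t \chi_\infty(a_t)(ze^{i\theta})^t\Bigr|\le\Bigl\|\sum_t a_tV_t\Bigr\|\qquad(\theta\in[0,2\pi)),
\]
it observes that $w\mapsto\sum_t\chi_\infty(a_t)w^t$ is analytic and invokes the maximum modulus principle to conclude boundedness of $\chi_w$ for all $|w|\le|z|$ in one stroke. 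Your Poisson-kernel route achieves the same conclusion constructively, and has the mild advantage that it also makes explicit why $S$ is closed (the paper leaves this implicit). Conversely, the paper's appeal to the maximum principle is shorter and avoids having to verify that the Bochner integral $\rho_r$ lands in the nonselfadjoint subalgebra $A_{ph}^{\mathbb{Z}^+}$, which you correctly flag as a point needing care.
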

\begin{proof}
If there is no continuous character of $\mathfrak{M}(A_{ph}^{\mathbb{Z}^+})$ in $\Delta_0$, apart from $\{x_\infty,x_\infty, y_0\}$, then there is nothing to prove.
Assume now that $\chi$ is a continuous character in $\Delta_0$, so $\chi(V_t)=z^t$ for some $z\neq 0$ in the unit disk. 
Hence 
\begin{align*}
|\sum_{t} \chi(a_t) z^t| \leq \|\sum_{t} a_t V_t\|\, ,\, a_t\in A_p.
\end{align*}
Applying the dual automorphisms $\phi_{e^{i\theta}}$ of $A_p\rtimes_v \mathbb{Z}^+$ for any $\theta\in(0,2\pi)$, it follows that 
\begin{align*}
|\sum_{t\in\mathbb{N}} \chi(a_t) (ze^{i\theta})^t| \leq \|\sum_{t\in\mathbb{N}} e^{i\theta t}a_t V_t\|=\|\sum_{t\in\mathbb{N}} a_t V_t\|.
\end{align*}
Therefore, by the maximum principle, each multiplicative linear functional of the form $a_t V_t\mapsto \chi(a_t)w^t$, where $|w|\leq |r|$, is continuous.
\end{proof}

\begin{thm}\label{isomVt}
The isometric isomorphisms of $A_{ph}^{\mathbb{Z}^+}$ are of the form 
\begin{equation}\label{isomisomform2}
\Phi(M_\lambda)=M_{k_1\lambda},\, \Phi(D_\mu)=D_{k_2\mu} \text{ and } \Phi(V_t)= c(t) V_t,
\end{equation}
where  $k_1 k_2 =1$ and $c:t\mapsto c(t)$ is multiplicative. 
\end{thm}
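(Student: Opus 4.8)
The plan is to run the argument of the proof of Theorem~\ref{apisom}, with the partially identified character space of Proposition~\ref{homot} in place of $\mathfrak{M}(A_p)$. Since an automorphism of a Banach algebra preserves its closed commutator ideal, $\Phi$ fixes $\mathfrak{C}_{ph}^{\mathbb{Z}^+}$ setwise, induces an automorphism of the commutative quotient, and hence a self-homeomorphism $\gamma$ of $\mathfrak{M}(A_{ph}^{\mathbb{Z}^+})=\tilde\Delta_0\sqcup\Delta_1\sqcup\Delta_2\sqcup\Delta_3\sqcup\Delta_4$ via $\gamma(\chi)(a)=\chi(\Phi(a))$. The core of the proof, and the step I expect to be the main obstacle, is to show $\gamma$ is rigid; the difficulty is precisely that $\mathfrak{M}(A_{ph}^{\mathbb{Z}^+})$ is only partially known ($\tilde\Delta_0$ may be a genuine disc), so the combinatorics of the glueing must be read off topologically.

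For this, observe that $\Delta_1\cong\Delta_2\cong\mathfrak{M}(AAP(\mathbb{R}))$ contains a copy of $\mathbb{R}_B$, so no point of $\Delta_1\cup\Delta_2$ has a metrizable neighbourhood, whereas every point of $\tilde\Delta_0$ and of $\Delta_3\cong\Delta_4\cong\overline{\mathbb{D}}$ away from the two attaching points does. Hence $\gamma$ preserves the closed set $\Delta_1\cup\Delta_2$ and therefore its unique cut point $\chi_\infty$; $\gamma$ then either fixes or interchanges $\Delta_1,\Delta_2$, and tracing through the attachments of $\Delta_3,\Delta_4,\tilde\Delta_0$ (again by metrizability) one is left with Case~A, where $\gamma$ fixes all five pieces setwise, and Case~B, where it interchanges $\Delta_1\leftrightarrow\Delta_2$ and $\Delta_3\leftrightarrow\Delta_4$. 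In Case~A one translates this into information on $\Phi$ exactly as in the proof of Theorem~\ref{apisom}: the characters of $\Delta_1,\Delta_2$ factor through $E_0\circ H_0$ and $Z_0\circ H_0$ and separate points of the semisimple uniform algebra $AAP(\mathbb{R})$, while those of $\Delta_3,\Delta_4,\tilde\Delta_0$ realise $A(\mathbb{D})$, on which $\gamma$ must act as a disc automorphism fixing the centre (the point $\Delta_1\cap\Delta_3$, respectively $\Delta_2\cap\Delta_4$, respectively $\chi_\infty$), hence as a rotation. Combined with Proposition~\ref{autapisomisom}, this yields, modulo $\mathfrak{C}_{ph}^{\mathbb{Z}^+}$, $\Phi(M_\lambda)\equiv d(\lambda)M_{k_2\lambda}$, $\Phi(D_\mu)\equiv c(\mu)D_{k_1\mu}$ and $\Phi(V_t)\equiv b(t)V_t$ for some $k_1,k_2>0$, characters $c,d$ of $\mathbb{R}_d$ and a multiplicative $b$; Case~B gives the same with the roles of $M$ and $D$ exchanged.

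Next I remove the commutator-ideal error terms, the only place where the isometric hypothesis enters. Work inside the $C^\ast$-algebra $B_{ph}^{\mathbb{Z}}\cong B_p\times_v\mathbb{Z}$ and let $\mathcal{F}\colon B_{ph}^{\mathbb{Z}}\to AP(\mathbb{R})$ be the faithful conditional expectation obtained by composing the canonical expectation onto $B_p$ with that of $B_p=AP(\mathbb{R})\times_\tau\mathbb{R}_d$ onto $AP(\mathbb{R})$ (both faithful, since $\mathbb{Z}$ and $\mathbb{R}_d$ are amenable, so reduced equals full). Writing $\Phi(M_\lambda)=d(\lambda)M_{k_2\lambda}+N$ with $N\in\mathfrak{C}_{ph}^{\mathbb{Z}^+}$ and multiplying by the unitary $M_{-k_2\lambda}$, the description of $\mathfrak{C}_{ph}^{\mathbb{Z}^+}$ gives $\mathcal{F}(M_{-k_2\lambda}N)=0$, while $\|d(\lambda)I+M_{-k_2\lambda}N\|=\|\Phi(M_\lambda)\|=1=|d(\lambda)|$. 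Applying $\mathcal{F}$ to $(d(\lambda)I+M_{-k_2\lambda}N)^\ast(d(\lambda)I+M_{-k_2\lambda}N)$ produces the positive element $|d(\lambda)|^2\,1+\mathcal{F}(N^\ast N)$, whose norm is at most $1$; hence $\mathcal{F}(N^\ast N)=0$ and, by faithfulness, $N=0$. The same device applied to $D_\mu$ and $V_t$ (with the unitaries $D_{k_1\mu}$ and $V_t$) makes all three formulas exact; in particular $\Phi(A_p)=A_p$.

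Finally, impose the defining relations. The Weyl relation $M_\lambda D_\mu=e^{i\lambda\mu}D_\mu M_\lambda$ forces $k_1k_2=1$ in Case~A and $k_1k_2=-1$ in Case~B, the latter impossible for $k_1,k_2>0$, so Case~B does not occur. Applying $\Phi$ to $V_tM_\lambda=M_{e^t\lambda}V_t$ and $V_tD_\mu=D_{e^{-t}\mu}V_t$ gives $d(\lambda)=d(e^t\lambda)$ and $c(\mu)=c(e^{-t}\mu)$ for all $t\in\mathbb{Z}^+$; since $c,d$ are characters of the additive group $\mathbb{R}$, the case $t=1$ forces $c\equiv d\equiv1$. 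After renaming, this is exactly \eqref{isomisomform2}. Conversely, each such assignment is the restriction to $A_{ph}^{\mathbb{Z}^+}$ of the composition of the $\ast$-automorphism $\operatorname{Ad}(V_{\log k_1})$ of $B_{ph}^{\mathbb{Z}}$ with the gauge automorphism $M_\lambda D_\mu V_t\mapsto c(t)\,M_\lambda D_\mu V_t$, hence is an isometric automorphism.
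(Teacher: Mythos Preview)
Your overall architecture matches the paper's: pass to the character space, pin down the homeomorphism $\gamma$, recover $\Phi$ on generators modulo an ideal, then upgrade to exact formulas using that $\Phi$ is isometric. The topological analysis via metrizability is more detailed than the paper's one-line appeal to Proposition~\ref{homot}, and your conditional-expectation argument for $M_\lambda$ and $D_\mu$ is a clean alternative to the paper's Lemma~\ref{uni}/Proposition~\ref{estimforredcros} computation.

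There is, however, a genuine gap in the step ``the same device applied to $\ldots$ $V_t$ $\ldots$ makes all three formulas exact''. Write $\Phi(V_t)=b(t)V_t+N'$ and multiply by $V_{-t}$; for the argument to run you need $\mathcal{F}(V_{-t}N')=0$. But
\[
\mathcal{F}(V_{-t}N')=(E_0\circ H_0)(V_{-t}N')=E_0\bigl(v_{-t}(H_t(N'))\bigr)=v_{-t}\bigl(E_0(H_t(N'))\bigr),
\]
and the description of $\mathfrak{C}_{ph}^{\mathbb{Z}^+}$ in \eqref{cphdescrip} only forces $x_1\bigl(E_0(H_t(N'))\bigr)=0$ and $x_\infty\bigl(E_0(H_t(N'))\bigr)=0$ for $t>0$, not $E_0(H_t(N'))=0$. (Concretely, $M_\lambda V_t - M_{\lambda e^t}V_t\in\mathfrak{C}_{ph}^{\mathbb{Z}^+}$ has $E_0(H_t(\,\cdot\,))=e^{i\lambda x}-e^{i\lambda e^t x}\neq 0$.) The same obstruction persists if you only know $N'\in\ker\Psi_1=\bigcap_{\chi\in\Delta_3}\ker\chi$: that forces $\sum_\lambda c_{\lambda,0,t}=0$ for each $t$, not $E_0(H_t(N'))=0$. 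Hence the cross terms in $(b(t)I+V_{-t}N')^\ast(b(t)I+V_{-t}N')$ do not vanish under $\mathcal{F}$, and you cannot conclude $N'=0$. This also undermines your final paragraph, since the derivation $d(\lambda)=d(e^t\lambda)$ from $V_tM_\lambda=M_{e^t\lambda}V_t$ presupposes the exact formula $\Phi(V_t)=b(t)V_t$.

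The paper avoids this circularity by a different route for $V_t$. After obtaining $\Phi(M_\lambda)=d(\lambda)M_{k_1\lambda}$ and $\Phi(D_\mu)=c(\mu)D_{k_2\mu}$ exactly, it first eliminates $c,d$ by evaluating the relation $\Phi(V_t)\Phi(M_\lambda)=\Phi(M_{\lambda e^t})\Phi(V_t)$ at a character $\chi\in\Delta_3$ with $\chi(\Phi(V_t))\neq 0$ (such $\chi$ exists since in Case~A $\gamma$ fixes $\Delta_3$, so $\Phi$ preserves $\bigcap_{\chi\in\Delta_3}\ker\chi$ and $V_t$ is not in that ideal); this yields $d(\lambda)=d(\lambda e^t)$, hence $d\equiv1$, and similarly $c\equiv1$ via $\Delta_4$. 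Only then does it determine $\Phi(V_t)$, and by an operator-theoretic argument rather than an expectation: with $d\equiv1$ the relation gives $\Phi(V_t)V_t^\ast M_\nu = M_\nu\Phi(V_t)V_t^\ast$ for all $\nu\in\mathbb{R}$, so $\Phi(V_t)V_t^\ast$ lies in the maximal abelian multiplication algebra $\mathcal{M}_m\subset B(L^2(\mathbb{R}))$; the analogous computation with $D_\mu$ places it in $\mathcal{D}_m$, and $\mathcal{M}_m\cap\mathcal{D}_m=\mathbb{C}I$ forces $\Phi(V_t)=c(t)V_t$. Inserting this step in place of your expectation argument for $V_t$ repairs the proof.
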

\begin{proof} 
Let $\Phi$ be an isometric isomorphism of $A_{ph}^{\mathbb{Z}^+}$. Once again we consider the induced homeomorphism 
\begin{align*}
\gamma :\mathfrak{M}(A_{ph}^{\mathbb{Z}^+}) \rightarrow \mathfrak{M}(A_{ph}^{\mathbb{Z}^+}) : \chi\mapsto \chi\circ \Phi^{-1}.
\end{align*}
 It follows by Proposition \ref{homot} that $\gamma$ fixes the subset of characters $\Delta_p=\Delta_1\sqcup_{(x_\infty,x_\infty,y_0)}\Delta_2$.
Hence the ideal $\mathcal{I}=\cap_{\chi\in\Delta_p} \textrm{ker}\chi$ is fixed by $\Phi$. By Proposition \ref{expect} it follows that the quotient algebra $A_{ph}^{\mathbb{Z}^+}/\mathcal{I}$ is isomorphic to $A_p/\mathfrak{C}_p$.
So the naturally induced automorphism $\tilde{\Phi}$ of the quotient algebra satisfies
\begin{align*}
\tilde{\Phi}(M_\lambda + \mathcal{I})&= d(\lambda) M_{k_1 \lambda}+ \mathcal{I} \\
\tilde{\Phi}(D_\mu + \mathcal{I})&= c(\mu) D_{k_2\mu}+ \mathcal{I}
\end{align*}
where $k_1k_2=1$ and $c,d$ are characters of the discrete group of the real numbers. Applying the same argument as in Lemma \ref{uni} we get that $\Phi(M_\lambda)=d(\lambda) M_{k_1 \lambda}$ and $\Phi(D_\mu)=c(\mu) D_{k_2 \mu}$.
Now, since the characters in $\Delta_3$ are continuous, by the commutation relations we get that 
\begin{align*}
\Phi(V_t)\Phi(M_\lambda)=\Phi(M_{\lambda e^t})\Phi(V_t) \Rightarrow d(\lambda)= d(\lambda e^t)\Rightarrow  d(\lambda)=1.
\end{align*}
Similarly, using the continuity of the characters in $\Delta_4$, we get that $c(\mu)=1$, for every $\mu>0$.
The argument to determine the image of the dilation operators is developed entirely on $L^2(\mathbb{R})$.
Since $\Phi( V_t) M_{k_1\lambda}=M_{k_1\lambda e^t}\Phi(V_t)$, if we right multiply both sides by $V_t^\ast$, we get
\begin{align*}
\Phi( V_t) V_t^\ast M_{k_1\lambda e^t}=M_{k_1\lambda e^t}\Phi(V_t)V_t^\ast.
\end{align*}
Hence $\Phi(V_t)V_t^\ast$ commutes with every $M_\lambda,\lambda\in\mathbb{R}$, so it lies in the multiplication algebra $\mathcal{M}_m$, since this algebra is maximal abelian.
Mimicking the same argument for the commutation relation with the translation operator, we get that $\Phi(V_t)V_t^\ast$ is also in the translation algebra $\mathcal{D}_m$.
But the intersection of these two algebras is the multiples of the identity operator, so $\Phi(V_t)=c(t) V_t$.  
We proved that $\Phi$ satisfies $\Phi(M_\lambda)=M_{k_1\lambda},\, \Phi(D_\mu)=D_{k_2\mu}, \Phi(V_t)= c(t) V_t$, where $k_1 k_2 =1$. Moreover since $c$ is multiplicative, we obtain  that $c(t)=e^{i\theta t}$, for some $\theta\in[0,2\pi)$ independent of $t$.
By the universal property of the crossed product, any such mapping can extend to an isometric isomorphism of $A_p\rtimes_v \mathbb{Z}^+$. 
\end{proof}

\begin{thm}
The algebra $A_{ph}^{\mathbb{Z}^+}$ is chiral.
\end{thm}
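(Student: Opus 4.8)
The plan is to identify the adjoint algebra with an \emph{oppositely oriented} triple semigroup algebra and then to rerun the argument of Theorem~\ref{isomVt}; the chirality is detected through the commutation relations binding the dilation semigroup to the multiplication semigroup. First I would reduce the problem. Let $J$ be the parity operator on $L^2(\mathbb{R})$, $(Jf)(x)=f(-x)$; then $JM_\lambda J=M_{-\lambda}$, $JD_\mu J=D_{-\mu}$ and $JV_tJ=V_t$. Since $\ast$ is an isometric anti-automorphism, $(A_{ph}^{\mathbb{Z}^+})^\ast$ is the norm-closed algebra generated by $\{M_{-\lambda},D_{-\mu},V_{-t}:\lambda,\mu,t\ge 0\}$, so $\operatorname{Ad}(J)$ carries it isometrically onto
\[
A_{ph}^{-}:=\|\cdot\|\textrm{-alg}\{M_\lambda,D_\mu,V_{-t}:\lambda,\mu\ge 0,\ t\in\mathbb{Z}^+\},
\]
the closed subalgebra of $B_p\rtimes_v\mathbb{Z}$ generated by $A_p$ and the negative dilation semigroup $\{V_{-t}:t\ge 0\}$. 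Thus it suffices to show that $A_{ph}^{\mathbb{Z}^+}$ and $A_{ph}^{-}$ are not isometrically isomorphic. I would then note that $A_{ph}^{-}$ contains the same parabolic algebra $A_p$ and that all the machinery of Sections~2--4 used below transfers verbatim to it: the contractions of Propositions~\ref{expect} and \ref{expect2} come from the weak-operator limits $D_{M_n}V_{\pm t}D_{M_n}^\ast\to 0$ ($t\ne 0$), and Proposition~\ref{homot} uses $V_{\pm t}$ only through $V_{\pm t}M_\lambda V_{\pm t}^\ast=M_{e^{\pm t}\lambda}$, both symmetric under $t\mapsto-t$; in particular $\mathfrak{M}(A_{ph}^{-})$ again has the form $\tilde\Delta_0\sqcup\Delta_1\sqcup\Delta_2\sqcup\Delta_3\sqcup\Delta_4$.

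Next, suppose for contradiction that $\Psi\colon A_{ph}^{\mathbb{Z}^+}\to A_{ph}^{-}$ is an isometric isomorphism, with induced homeomorphism $\gamma$ of the two maximal ideal spaces. As in the proof of Theorem~\ref{isomVt}, $\gamma$ preserves the distinguished part $\Delta_p=\Delta_1\sqcup_{\chi_\infty}\Delta_2$, hence the ideal $\mathcal I=\bigcap_{\chi\in\Delta_p}\ker\chi$; passing to the quotients — identified with $A_p/\mathfrak{C}_p$ on both sides by Proposition~\ref{expect} — gives an isometric automorphism of $A_p/\mathfrak{C}_p$. Theorem~\ref{apisom}, whose proof already excludes the interchange of the multiplication and translation directions (the ``swap'' would force a product of two positive dilation parameters to equal $-1$), together with the argument of Lemma~\ref{uni} (the sole place where isometry is used), then yields $\Psi(M_\lambda)=d(\lambda)M_{k_1\lambda}$ and $\Psi(D_\mu)=c(\mu)D_{k_2\mu}$ for some $k_1,k_2>0$ and unimodular characters $c,d$ of $\mathbb{R}_d$.

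The decisive step is to examine $\Psi(V_t)$ for a fixed $t>0$. Applying $\Psi$ to $V_tM_\lambda=M_{e^t\lambda}V_t$ gives $\Psi(V_t)M_{k_1\lambda}=\omega(\lambda)\,M_{k_1e^t\lambda}\Psi(V_t)$ with $\omega(\lambda)=d(e^t\lambda)d(\lambda)^{-1}\in\mathbb{T}$. Expanding $\Psi(V_t)\in A_{ph}^{-}$ in its Fej\'er (Ces\`aro) series in the dilation variable, $\Psi(V_t)=\sum_{s\ge 0}Y_{-s}V_{-s}$ with $Y_{-s}=\tilde H_{-s}(\Psi(V_t))\in A_p$, and using the identities $\tilde H_{-s}(XM_{k_1\lambda})=\tilde H_{-s}(X)M_{e^{-s}k_1\lambda}$ and $\tilde H_{-s}(M_{k_1e^t\lambda}X)=M_{k_1e^t\lambda}\tilde H_{-s}(X)$ (immediate on trigonometric polynomials, extended by continuity), one obtains $Y_{-s}M_{e^{-s}k_1\lambda}=\omega(\lambda)M_{k_1e^t\lambda}Y_{-s}$ for all $s\ge 0$ and $\lambda>0$. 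Viewing $Y_{-s}\in B_p\cong AP(\mathbb{R})\times_\tau\mathbb{R}_d$ and comparing the $D$-Fourier coefficients $E_n$ by way of the Weyl relations, each coefficient function in $AP(\mathbb{R})$ is annihilated by $e^{ie^{-s}k_1\lambda x}-\omega'(\lambda)e^{ie^tk_1\lambda x}$ for a unimodular $\omega'(\lambda)$; since $e^{-s}\ne e^{t}$ for $s\ge 0$, $t>0$, this is a difference of two unimodular exponentials of distinct frequency, which has only isolated zeros, so the coefficient vanishes by continuity. Hence $Y_{-s}=0$ for every $s\ge 0$ by Corollary~\ref{uniqfoco}, so $\Psi(V_t)=0$, contradicting the injectivity of $\Psi$.

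The obstacle I anticipate is the second sentence of the previous paragraph: one must verify that the full maximal-ideal-space analysis of Section~4 — Proposition~\ref{homot} together with the opening of Theorem~\ref{isomVt} — applies to isomorphisms \emph{between} $A_{ph}^{\mathbb{Z}^+}$ and $A_{ph}^{-}$, and not merely to automorphisms of a single algebra. This is routine given the near-identical structure of the two algebras, but it is precisely where the hypothesis ``isometric'' enters (via Lemma~\ref{uni}) and where the exclusion of the $M\leftrightarrow D$ interchange is inherited from Theorem~\ref{apisom}; the concluding contradiction is then a short computation with almost-periodic Fourier coefficients in $B_p$.
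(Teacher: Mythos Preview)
Your argument is correct, but it takes a different and considerably longer route than the paper in the final step. The paper works directly with a hypothetical isometric isomorphism $\Phi\colon A_{ph}^{\mathbb{Z}^+}\to (A_{ph}^{\mathbb{Z}^+})^\ast$, without first conjugating by the parity operator $J$. Rerunning the argument of Theorem~\ref{isomVt} yields $\Phi(M_\lambda)=M_{-k_1\lambda}$ and $\Phi(D_\mu)=D_{-k_2\mu}$. The decisive step is then handled in one line on the Hilbert space: from $\Phi(V_t)M_{-k_1\lambda}=M_{-k_1e^t\lambda}\Phi(V_t)$ one right-multiplies by $V_t^\ast$ and uses $M_{-k_1\lambda}V_t^\ast=V_t^\ast M_{-k_1 e^t\lambda}$ to see that $\Phi(V_t)V_t^\ast$ commutes with every $M_\nu$; the analogous computation with $D_\mu$ gives $\Phi(V_t)V_t^\ast\in\mathcal{M}_m\cap\mathcal{D}_m=\mathbb{C}I$. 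Hence $\Phi(V_t)=c(t)V_t$ for some scalar $c(t)$, and for $t>0$ this operator does not lie in $(A_{ph}^{\mathbb{Z}^+})^\ast$, which is the contradiction.

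Your approach instead passes to the Fourier expansion of $\Psi(V_t)$ along the dilation variable and kills each coefficient $Y_{-s}\in A_p$ by a frequency-mismatch argument inside $AP(\mathbb{R})$. This is valid, and has the minor virtue of not needing to eliminate the unimodular characters $c,d$ beforehand (you carry $\omega(\lambda)$ through). But it replaces the clean use of the maximal abelian property of $\mathcal{M}_m$ and $\mathcal{D}_m$ --- the very device already exploited in Theorem~\ref{isomVt} --- with a two-level Fourier-coefficient computation. The paper's route is both shorter and more in keeping with the operator-theoretic spirit of the preceding theorem.
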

\begin{proof}
It suffices to prove that $A_{ph}^{\mathbb{Z}^+}$ is not isometrically isomorphic to its conjugate algebra $(A_{ph}^{\mathbb{Z}^+})^\ast$.
If $\Phi$ was such an isomorphism, then following the same proof as in the previous theorem we get that
$\Phi (M_\lambda)=M_{-k_1\lambda}$ and $\Phi(D_\mu)=D_{-k_2\mu}$. But then again, we can prove that $\Phi(V_t) V_t^\ast=c(t) I$, so $\Phi(V_t)=c(t)V_t\notin (A_{ph}^{\mathbb{Z}^+})^\ast$.
\end{proof}

\subsection{The algebra $A_{ph}^{\mathbb{R}^+}$}
 The approach to the triple semigroup algebra is similar to the case of $A_{ph}^{\mathbb{Z}^+}$. Note that the algebra generated by the unitary semigroup $\{V_t\}_{t\geq0}$ is isometrically isomorphic to $AAP(\mathbb{R})$. Writing $AAP_3$ for this algebra, we obtain that the mapping 
\begin{align*}
\mathfrak{M}(A_{ph}^{\mathbb{R}^+})\rightarrow \mathfrak{M}(AAP_1)\times \mathfrak{M}(AAP_2)\times\mathfrak{M}(AAP_3):\chi\mapsto (\chi\big|_{AAP_1},\chi\big|_{AAP_2},\chi\big|_{AAP_3})
\end{align*}
is continuous into the subset
\begin{align*}
(\mathfrak{M}(AAP_1) \times \{x_\infty\}\times \{x_\infty\})&\cup (\{x_\infty\}\times \mathfrak{M}(AAP_2)\times \{x_\infty\})\cup\\ 
\cup(\{x_1\}\times\{x_\infty\}\times \mathfrak{M}(AAP_3))
\cup (\{x_\infty\}\times\{x_1\}\times &\mathfrak{M}(AAP_3))\cup (\{x_\infty\}\times\{x_\infty\}\times \mathfrak{M}(AAP_3)).
\end{align*}
We also keep the notation for $\Delta_1, \Delta_2, \Delta_3, \Delta_4, \Delta_0$ as in the previous section. Note now that each disk is homeomorphic to the topological space $\mathbb{R}_B\times [0,\infty)\cup{\infty}$. Again, the continuity of the characters in $\Delta_1, \Delta_2, \Delta_3$ and $\Delta_4$ follows from Propositions \ref{expect} and \ref{expect2}, while it is unknown to the author if the multiplicative linear functionals in $\Delta_0$ are continuous. Moreover, it remains also unclear if Proposition \ref{homot} holds in this case, since we may have continuous limit characters in $\Delta_0$. 
Nonetheless, let $\chi_{z}$ be the multiplicative functional in $\Delta_0$, that evaluates a function in $AAP3$ to the point $z$ of the upper half plane of $\mathbb{C}$. If $\chi_z$ was continuous, then mimicking the proof of \ref{homot}, we would get that any multiplicative functional of the form $\chi_{w}$, where $Im(w)\geq Im(z)$, is continuous. Moreover, any limit character in the closure of the set $\{\chi_w: Im(w)\geq Im(z)\}$ would be continuous. 

Given now any isometric isomorphism $\Phi$ of $A_{ph}^{\mathbb{R}^+}$, define the induced homeomorphism, say $\gamma$, of the character space $\mathfrak{M}(A_{ph}^{\mathbb{R}^+})$ onto itself. Since the set of limit characters has empty interior, it follows that $\gamma$ permutes the discs. Hence it fixes the set $\Delta_p$ of characters that map the family of the dilation operators $\{V_t\}_{t>0}$ to zero. 
This is the closure of the set of characters of the norm closed parabolic algebra that are extended uniquely in the triple semigroup algebra. 
Hence using the same arguments we get that the restriction of $\Phi$ in $A_p$ is a isometric automorphism of the parabolic algebra. Then, repeating the last argument of the proof of Theorem \ref{isomVt}, we have the corresponding result;
\begin{thm}
The isometric isomorphisms of $A_{ph}^{\mathbb{R}^+}$ are of the form 
\begin{align*}
\Phi(M_\lambda)=M_{k_1\lambda},\, \Phi(D_\mu)=D_{k_2\mu} \text{ and } \Phi(V_t)= c(t) V_t,
\end{align*}
where  $k_1 k_2 =1$ and $c:t\mapsto c(t)$ is multiplicative. Furthermore, the algebra $A_{ph}^{\mathbb{R}^+}$ is chiral.
\end{thm}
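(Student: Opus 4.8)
The plan is to follow the proof of Theorem~\ref{isomVt} essentially verbatim, with the disc algebra replaced by $AAP_3$ throughout, and to pinpoint the handful of places where the crossed-product theory over $\mathbb{R}_d$ (rather than over $\mathbb{Z}$) must be used. Let $\Phi$ be an isometric isomorphism of $A_{ph}^{\mathbb{R}^+}$ and let $\gamma\colon\chi\mapsto\chi\circ\Phi^{-1}$ be the induced self-homeomorphism of $\mathfrak{M}(A_{ph}^{\mathbb{R}^+})$. The first step is to prove that $\gamma$ preserves the set $\Delta_p=\Delta_1\sqcup_{\chi_\infty}\Delta_2$ of characters annihilating all $V_t$, $t>0$. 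Since the limit characters have empty interior, an argument with the connected components of the complementary open dense set of evaluation characters shows that $\gamma$ permutes the pieces $\Delta_0,\dots,\Delta_4$; among them, $\Delta_1$ and $\Delta_2$ are distinguished as the two discs touching the union of the others in two points, whereas $\Delta_0,\Delta_3,\Delta_4$ touch it in one, and $\chi_\infty$ is the unique cut point of $\Delta_p$. Consequently $\gamma(\Delta_p)=\Delta_p$, $\gamma$ fixes $\chi_\infty$ and the disc $\Delta_0$, and it either fixes or interchanges $\Delta_3$ and $\Delta_4$.

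The next step is to descend to a quotient. Since $\Delta_p$ is $\gamma$-invariant, the closed ideal $\mathcal{I}=\bigcap_{\chi\in\Delta_p}\ker\chi$ is $\Phi$-invariant, and Proposition~\ref{expect} identifies $A_{ph}^{\mathbb{R}^+}/\mathcal{I}$ with $A_p/\mathfrak{C}_p$. Running the argument of the proof of Theorem~\ref{apisom} on the induced automorphism of this quotient --- its ``flip'' alternative being excluded once more because $k_1k_2=-1$ has no positive solution --- one obtains $\tilde{\Phi}(M_\lambda+\mathcal{I})=d(\lambda)M_{k_1\lambda}+\mathcal{I}$ and $\tilde{\Phi}(D_\mu+\mathcal{I})=c(\mu)D_{k_2\mu}+\mathcal{I}$ with $k_1k_2=1$ and $c,d$ unimodular characters of $\mathbb{R}_d$. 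I would then apply the Fourier-coefficient estimate of Lemma~\ref{uni} --- Proposition~\ref{estimforredcros} for the dilation crossed product, together with the analogous estimates in the $M$- and $D$-directions --- to lift these congruences to the identities $\Phi(M_\lambda)=d(\lambda)M_{k_1\lambda}$ and $\Phi(D_\mu)=c(\mu)D_{k_2\mu}$; in particular $\Phi(A_p)=A_p$ and $\Phi|_{A_p}$ is an isometric automorphism of $A_p$. Evaluating a character of the $\gamma$-fixed disc $\Delta_3$ on $\Phi(M_\lambda)$ forces $d\equiv1$, and using $\Delta_4$ yields $c\equiv1$, so $\Phi(M_\lambda)=M_{k_1\lambda}$ and $\Phi(D_\mu)=D_{k_2\mu}$.

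The dilation operators are then determined on $L^2(\mathbb{R})$: from $\Phi(V_t)M_{k_1\lambda}=M_{k_1e^t\lambda}\Phi(V_t)$ and the matching relation with $D_\mu$, the operator $\Phi(V_t)V_t^\ast$ commutes with the multiplication algebra and with the translation algebra; these are maximal abelian and meet only in $\mathbb{C}I$, so $\Phi(V_t)=c(t)V_t$ with $c(t)\in\mathbb{T}$, and $\Phi(V_{s+t})=\Phi(V_s)\Phi(V_t)$ makes $t\mapsto c(t)$ multiplicative. For the converse direction, whenever $k_1k_2=1$ and $c$ is multiplicative the prescribed assignments respect the Weyl and dilation relations, hence by the universal property of the crossed product they extend to an isometric automorphism of $B_p\times_v\mathbb{R}_d$ which restricts to $A_{ph}^{\mathbb{R}^+}$.

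For chirality, suppose $\Phi\colon A_{ph}^{\mathbb{R}^+}\to(A_{ph}^{\mathbb{R}^+})^\ast$ were an isometric isomorphism, where $(A_{ph}^{\mathbb{R}^+})^\ast$ is the norm-closed algebra generated by $M_{-\lambda},D_{-\mu},V_{-t}$ with $\lambda,\mu,t\geq0$. The same analysis --- no flip again, since now $-k_1k_2=1$ is impossible --- gives $\Phi(M_\lambda)=M_{-k_1\lambda}$, $\Phi(D_\mu)=D_{-k_2\mu}$ and then $\Phi(V_t)=c(t)V_t$ for $t\geq0$; but for $t>0$ the operator $V_t$ has non-zero $H_t$-coefficient, so it does not belong to $(A_{ph}^{\mathbb{R}^+})^\ast$, contradicting surjectivity --- hence $A_{ph}^{\mathbb{R}^+}$ is chiral. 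I expect the main obstacle to be the first step: in contrast with the $\mathbb{Z}^+$ case, where Proposition~\ref{homot} describes the character space explicitly (up to the closed disc $\tilde{\Delta}_0$), here $\Delta_0$ is not understood --- continuous limit characters may occur inside it --- so the $\gamma$-invariance of $\Delta_p$ has to be read off from homeomorphism invariants alone: the empty interior of the limit locus, the number of gluing points of each disc, and the uniqueness of the cut point $\chi_\infty$.
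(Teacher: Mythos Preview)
Your proposal is correct and follows the paper's own argument closely: the paper likewise argues that the limit-character locus has empty interior so that $\gamma$ permutes the discs and fixes $\Delta_p$, descends to $A_p/\mathfrak{C}_p$ to obtain $d,c,k_1,k_2$ with the flip excluded, lifts via the Lemma~\ref{uni} estimate, eliminates $d,c$ using $\Delta_3,\Delta_4$, and pins down $\Phi(V_t)$ by the maximal-abelian intersection $\mathcal{M}_m\cap\mathcal{D}_m=\mathbb{C}I$, with chirality handled identically. Your gluing-point and cut-point count makes explicit what the paper only asserts; the one wrinkle is that you call $\Delta_3$ ``$\gamma$-fixed'' after earlier allowing $\gamma$ to swap $\Delta_3,\Delta_4$ --- it is indeed fixed, but only because the quotient step has already ruled out the flip, so $\gamma$ fixes $\Delta_1,\Delta_2$ separately and hence their attached discs, a deduction you use implicitly but should state.
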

\begin{rem}
It was shown in \cite{kas-pow} that the unitary automorphisms of the weak$^\ast$-closed triple semigroup algebra $\mathcal{A}_{ph}$ are of the form $\operatorname{Ad}(V_t)$. It is still unknown if these are also the isometric isomorphisms of the algebra. In particular, it remains unclear to the author if the dual automorphisms of the norm closed algebra $\mathcal{A}_{ph}^{\mathbb{R}^+}$ can be extended to its weak$^\ast$-closure.
\end{rem}

\end{document}